\documentclass[11pt,reqno]{amsart}

\usepackage{amsfonts}
\usepackage{eurosym}
\usepackage{amssymb}
\usepackage{amsthm}
\usepackage{amsmath}
\usepackage{amsaddr}
\usepackage{bm}
\usepackage{cite}
\usepackage{mathrsfs}
\usepackage{xcolor}
\usepackage[OT1]{fontenc}
\usepackage[left=2.5cm, right=2.5cm, top=3cm, bottom=2.5cm]{geometry}
\usepackage{hyperref}
\usepackage{mathtools}
\hypersetup{colorlinks=true, linkcolor=blue, citecolor=red, urlcolor=blue}
\usepackage{times}
\usepackage{placeins}

\usepackage{enumitem}
\usepackage{xpatch}
\makeatletter
\AtBeginDocument{\xpatchcmd{\@thm}{\thm@headpunct{.}}{\thm@headpunct{}}{}{}}
\makeatother

\allowdisplaybreaks
\newtheorem{theorem}{Theorem}[section]

\newtheorem{definition}[theorem]{Definition}
\newtheorem{lemma}[theorem]{Lemma}
\newtheorem{proposition}[theorem]{Proposition}

\theoremstyle{remark}
\newtheorem{remark}[theorem]{Remark}

\numberwithin{equation}{section}

\usepackage{parskip}
\DeclareMathAlphabet\mathbfcal{OMS}{cmsy}{b}{n}

\newcommand{\abs}[1]{|#1|}

\newcommand{\Bigabs}[1]{\Big| #1 \Big|}

\newcommand{\norm}[1]{\| #1 \|}

\newcommand{\ang}[2]{ \langle #1 , #2  \rangle}
\newcommand{\bigang}[2]{ \big< #1 , #2  \big>}

\newcommand{\scp}[2]{ \left( #1 , #2  \right)}
\newcommand{\bigscp}[2]{\big( #1 , #2 \big)}

\newcommand{\meano}[1]{{\langle #1 \rangle}_{\Omega}}
\newcommand{\meang}[1]{{\langle #1 \rangle}_{\Gamma}}
\newcommand{\mean}[2]{\textnormal{mean}(#1,#2)}
\newcommand{\bigmean}[2]{\textnormal{mean}\big(#1,#2\big)}

\newcommand{\R}{\mathbb R}
\newcommand{\N}{\mathbb N}
\newcommand{\n}{\mathbf{n}}

\newcommand{\intO}{\int_\Omega}

\newcommand{\intG}{\int_\Gamma}
\newcommand{\intS}{\int_\Sigma}

\newcommand{\dtau}{\;\mathrm d\tau}

\newcommand{\dx}{\;\mathrm{d}x}

\newcommand{\dt}{\;\mathrm dt}

\newcommand{\dxt}{\;\mathrm{d}x\;\mathrm{d}t}
\newcommand{\dGt}{\;\mathrm{d}\Ga\;\mathrm{d}t}
\newcommand{\dxs}{\;\mathrm{d}x\;\mathrm{d}s}
\newcommand{\dGs}{\;\mathrm{d}\Ga\;\mathrm{d}s}
\newcommand{\dxtau}{\;\mathrm{d}x\;\mathrm{d}\tau}
\newcommand{\dGtau}{\;\mathrm{d}\Ga\;\mathrm{d}\tau}

\newcommand{\dG}{\;\mathrm d\Ga}

\newcommand{\ddt}{\frac{\mathrm d}{\mathrm dt}}

\newcommand{\del}{\partial}
\newcommand{\delt}{\partial_{t}}

\newcommand{\deln}{\partial_\n}
\newcommand{\delthp}{\del_{t,h}^+}
\newcommand{\delthm}{\del_{t,h}^-}

\newcommand{\Grad}{\nabla}
\newcommand{\Lap}{\Delta}
\newcommand{\Div}{\textnormal{div}}
\newcommand{\Gradg}{\nabla_\Ga}
\newcommand{\Lapg}{\Delta_\Ga}
\newcommand{\Divg}{\textnormal{div}_\Ga}

\newcommand{\emb}{\hookrightarrow}

\newcommand{\suchthat}{\;\ifnum\currentgrouptype=16 \middle\fi|\;}

\newcommand{\Om}{\Omega}
\newcommand{\Ga}{\Gamma}

\definecolor{rosso}{rgb}{0.8,0,0}
\definecolor{violet}{rgb}{0.65,0,0.65}
\definecolor{darkgreen}{rgb}{0,0.5,0}

\newcommand{\J}{\mathbf J}
\newcommand{\K}{\mathbf K}
\newcommand{\f}{\mathbf f}

\newcommand{\bv}{\mathbf v}
\newcommand{\bw}{\mathbf w}
\newcommand{\wv}{\widehat\bv}
\newcommand{\ww}{\widehat\bw}
\newcommand{\tv}{\widetilde{\bv}}
\newcommand{\tw}{\widetilde{\bw}}
\newcommand{\D}{\mathbf D}
\newcommand{\Dg}{\mathbf D_\Ga}

\newcommand{\uloc}{\mathrm{uloc}}
\newcommand{\tot}{\textup{tot}}
\newcommand{\free}{\mathrm{free}}

\def \no#1#2#3 {{\bf #1} (#3), #2.}
\def \eds#1#2#3 {#1, #2, #3.}
\makeatletter
\def\@settitle{\begin{center}%
  \baselineskip14\p@\relax
    \huge%<- NEW
  \@title
  \end{center}%
}
\makeatother

\begin{document}

\title[On  bulk-surface NSCH model: Existence of weak solutions and asymptotic limits]
{\emph{On a bulk-surface Navier--Stokes--Cahn--Hilliard model: \\ Existence of weak solutions and asymptotic limits}}
\author[Jonas Stange]{Jonas Stange}

\address{Universit\"{a}t Regensburg \\ 
Fakult\"{a}t f\"{u}r Mathematik \\
Universit\"atsstr. 31, D-93053 Regensburg, Germany\\
\href{mailto:jonas.stange@ur.de}{jonas.stange@ur.de}
}

\subjclass[2020]{35K55, 35Q35, 76D05}

% 35K35, %%% Nonlinear parabolic equations
% 35D30, %%% Weak solutions to PDEs
% 35A01, %%% Existence problems for PDEs: global existence, local existence, non-existence
% 35A02, %%% Uniqueness problems for PDEs: global uniqueness, local uniqueness, non-uniqueness
% 35Q32, %%% PDEs in connection with fluid mechanics
% 76D05, %%% Navier-Stokes equations for incompressible viscous fluids

%
\keywords{Two-phase flows, Navier--Stokes equations, Cahn--Hilliard equation, bulk-surface interaction, dynamic boundary conditions, global weak solutions, asymptotic limits.}

%\date{\today}

\begin{abstract}
We study a thermodynamically consistent bulk-surface Navier--Stokes--Cahn--Hilliard system describing two-phase flows exhibiting moving contact lines, variable contact angles, and mass transfer. We establish the existence of global weak solutions for non-degenerate mobility functions and singular free-energy potentials. The proof is based on an implicit time-discretization scheme and a subdifferential characterization of the convex part of the bulk-surface free energy. Finally, we study the simultaneous high-friction and decoupling limit, and prove that corresponding weak solutions converge, up to a subsequence, to a weak solution of the Abels--Garcke--Grün model. One key ingredient of the proof is the Mosco convergence of the convex part of the bulk-surface free energy, which might be of independent interest.
% Abstract \\
% Abstract \\
% Abstract \\
% Abstract \\
% Abstract \\
% Abstract \\
% Abstract \\
% We analyze a recently derived bulk-surface Navier--Stokes--Cahn--Hilliard model for the modeling of fluid mixtures, which allows for moving contact lines, variable contact angles, and mass transfer.
% We prove the existence of global weak solutions using an implicit time-discretization scheme. For one parameter need to consider the asymptotic limit. To this end, we first prove Mosco convergence of the free energy functional when letting the parameter controlling the coupling go to $0$ and $\infty$, respectively. Lastly, by sending all coupling parameters to the decoupling regime, in particular high friction limit, we show the convergence of weak solutions to weak solutions of the Abels--Garcke--Grün model.
\end{abstract}

\maketitle

\section{Introduction}
\label{SECT:INTRO}
\noindent
The mathematical description of mixtures of two or more immiscible materials is a fundamental topic in fluid mechanics and materials science, with numerous applications in biology, chemistry, and engineering. Typical phenomena include phase separation, droplet breakup, thermocapillary flows, and the motion of contact lines. A central difficulty in the modeling of such systems is the description of the evolving interface separating the different components. Two principal approaches have been developed for this purpose: sharp interface methods and diffuse interface methods, see, for instance, \cite{Abels2018,Du2020,Giga2018,Pruess2016} and the references therein.

In sharp interface models, the different materials occupy distinct regions separated by an interface of zero thickness. The interface is represented by an evolving hypersurface whose motion is coupled to the governing equations in the surrounding bulk regions and suitable transmission conditions across the interface. Since the position of the interface is itself part of the unknown, this leads to a free boundary problem. While this approach provides a natural description when the interface remains sufficiently regular, the mathematical treatment becomes particularly challenging in the presence of topological changes such as the merging or breakup of droplets.

Diffuse interface models provide an alternative description in which the sharp interface is replaced by a thin transition layer of positive width. The local composition of a binary mixture is described by an order parameter, usually called the phase-field, which takes values close to the pure phases away from the interface and varies continuously across the interfacial region. In contrast to the sharp interface approach, no explicit tracking of the moving interface is required, and the evolution can be formulated on a fixed spatial domain. This makes diffuse interface models particularly convenient for describing topological changes and complex interfacial dynamics. Moreover, in many situations, their relation to the corresponding sharp interface models can be justified through suitable sharp interface limits.

A classical diffuse interface model for two incompressible viscous fluids is the so-called Model H \cite{Hohenberg1977, Gurtin1996, Starovoitov1994}, which couples the incompressible Navier--Stokes equations with a convective Cahn--Hilliard equation. Its classical formulation assumes that the two fluids have the same constant density. To describe mixtures with different densities, Abels, Garcke, and Grün \cite{Abels2012} introduced a thermodynamically consistent diffuse interface model for incompressible two-phase flows with unmatched densities, also called the AGG model. It reads as
\begin{subequations}\label{System:AGG}
    \begin{alignat}{2}
        &\delt(\rho(\phi)\bv) + \Div\big(\bv\otimes(\rho(\phi)\bv + \J)\big) - \Div(2\nu_\Om(\phi)\D\bv) + \Grad\, p = -\Div(\Grad\phi\otimes\Grad\phi) &&\qquad\text{in~}Q, \\
        &\Div\,\bv = 0 &&\qquad\text{in~}Q, \\
        &\delt\phi + \Div(\phi\bv) = \Div(m_\Om(\phi)\Grad\mu) &&\qquad\text{in~}Q, \\
        &\mu = -\Lap\phi + F^\prime(\phi) &&\qquad\text{in~}Q, \\
        &\bv = \mathbf 0, \qquad \deln\phi = m_\Om(\phi)\deln\mu = 0 &&\qquad\text{on~}\Sigma, \\
        &\bv\vert_{t=0} = \bv_0, \qquad \phi\vert_{t=0} = \phi_0 &&\qquad\text{in~}\Om.
    \end{alignat}
\end{subequations}
Here, $\Om\subset\R^d$, $d\in\{2,3\}$, is a bounded domain with boundary $\Ga\coloneqq\partial\Om$, and we set
\begin{align*}
    Q\coloneqq\Om\times(0,\infty), \qquad \Sigma\coloneqq\Ga\times(0,\infty).
\end{align*}
The velocity field of the mixture is denoted by $\bv$, while $\phi$ denotes the phase-field and $\mu$ the associated chemical potential. The density is given by the affine interpolation
\begin{align*}
    \rho(\phi) \coloneqq \frac{\tilde\rho_2-\tilde\rho_1}{2}\phi + \frac{\tilde\rho_2+\tilde\rho_1}{2},
\end{align*}
where $\tilde\rho_1,\tilde\rho_2>0$ denote the constant densities of the pure fluids. The additional mass flux
\begin{align*}
    \J \coloneqq -\frac{\tilde\rho_2-\tilde\rho_1}{2}m_\Om(\phi)\Grad\mu
\end{align*}
accounts for mass transport induced by diffusion. In particular, if the densities are matched, i.e.,
$\tilde\rho_1=\tilde\rho_2$, then $\rho$ is constant and $\J=
\equiv\mathbf 0$, and the system reduces to Model H.
In addition, $\nu_\Om$ denotes the viscosity of the mixture in the bulk, and in general depends on the phase-field. If the two pure fluids have constant viscosities $\nu_1$ and $\nu_2$, respectively, a common choice is the affine interpolation
\begin{align*}
    \nu_\Om(\phi) = \frac{\nu_2-\nu_1}{2}\phi + \frac{\nu_2+\nu_1}{2} \qquad\text{in~}Q.
\end{align*}
Moreover, $\D$ denotes the symmetric gradient operator, and is defined by
\begin{align*}
    \D\bv\coloneqq \frac12\Big((\Grad\bv) + (\Grad\bv)^\top\Big).
\end{align*}
The function $m_\Om$ is the so-called Onsager mobility. In general, it depends on the phase-field variable $\phi$, and describes both the spatial distribution and the magnitude of the underlying diffusion mechanisms. The function $F$ is a double-well potential describing the tendency of the mixture to separate into two phases. A physically relevant example is the logarithmic Flory--Huggins potential
\begin{align*}
    W_\mathrm{log}(s) = \frac{\Theta}{2}\big((1+s)\ln(1+s)+(1-s)\ln(1-s)\big) - \frac{\Theta_c}{2}s^2, \qquad s\in[-1,1],
\end{align*}
where $0<\Theta<\Theta_c$. Since
$W_\mathrm{log}^\prime(s)\to\pm\infty$ as $s\to\pm1$, this potential belongs to the class of singular free-energy densities.

The classical boundary conditions in \eqref{System:AGG}, although mathematically convenient, impose several restrictions on the description of phenomena occurring near the boundary. First, the homogeneous Neumann condition $\deln\phi=0$ enforces the diffuse interface to meet the boundary orthogonally and therefore corresponds to a fixed contact angle of ninety degrees. Second, the no-flux condition for the chemical potential prevents any transfer of material between the bulk and the boundary. Finally, the no-slip condition $\bv=\mathbf 0$ suppresses tangential fluid motion directly at the boundary and is therefore not well suited for accurately describing moving contact lines, \cite{Dussan1979,Qian2006,Seppecher1996}. These limitations have motivated the development of diffuse interface models with dynamic boundary conditions and generalized Navier-slip conditions.

In particular, dynamic boundary conditions for the Cahn--Hilliard equation allow the boundary to possess its own free energy and phase-separation dynamics, see, e.g., \cite{Goldstein2011,Liu2019,Knopf2021a,Knopf2020}. Depending on the coupling mechanism, they can describe variable contact angles as well as exchange of material between the bulk and the surface. In two-phase flow models, such conditions are naturally combined with generalized Navier boundary conditions, which relate tangential stresses to slip velocities and capillary forces at the boundary, see, e.g., \cite{Chan2025, Gal2016, Gal2019, Giorgini2023, Gal2024, Gal2023, Knopf2025a}. This provides a substantially richer description of contact-line dynamics than classical no-slip and homogeneous Neumann boundary conditions.

Motivated by these considerations, Knopf and the author \cite{Knopf2025a} recently introduced and analyzed a thermodynamically consistent bulk-surface Navier--Stokes--Cahn--Hilliard model. In contrast to previous models with dynamic boundary conditions, the model additionally incorporates a surface Navier--Stokes equation to account for momentum transport along the boundary. Thus, not only the phase separation but also the viscous fluid dynamics are described both in the bulk and on the boundary. The inclusion of the surface Navier--Stokes equation is motivated, in particular, by applications to biological membranes. According to the classical fluid-mosaic model of Singer and Nicolson \cite{Singer1972}, biological membranes can be viewed as laterally incompressible two-dimensional viscous fluid layers containing embedded proteins. This suggests treating the boundary as an active material layer which supports both compositional evolution and tangential momentum transport.

The bulk-surface Navier--Stokes--Cahn--Hilliard system considered in the present paper reads as \\[-1.22em]
\begin{subequations}\label{System:NSCH}
    \begin{alignat}{2}
        \label{System:NSCH:1}
        &\delt(\rho(\phi)\bv) + \Div\big(\bv\otimes(\rho(\phi)\bv + \J)\big) - \Div(2\nu_\Om(\phi)\D\bv) + \Grad\, p = -\Div(\Grad\phi\otimes\Grad\phi) &&\qquad\text{in~}Q,\\
        \label{System:NSCH:2}
        &\Div\;\bv = 0 &&\qquad\text{in~}Q,
        \\
        \label{System:NSCH:3}
        &\delt(\sigma(\psi)\bw) + \Divg\big(\bw\otimes(\sigma(\psi)\bw + \K)\big) - \Divg(2\nu_\Ga(\psi)\Dg\bw) + \Gradg\,q \nonumber 
        \\
        &\, = -\Divg(\Gradg\psi\otimes\Gradg\psi) - 2\nu_\Om(\phi)\big[ \D\bv\,\n \big]_\tau + \tfrac12(\J\cdot\n)\bw + \tfrac12(\J_\Ga\cdot\n)\bw + \alpha\deln\phi\Gradg\psi \nonumber \\
        &\quad - \gamma(\phi,\psi)\bw &&\qquad\text{on~}\Sigma, 
        \\
        \label{System:NSCH:4}
        &\Divg\;\bw = 0 &&\qquad\text{on~}\Sigma,
        \\
        \label{System:NSCH:5}
        &\bv\vert_\Ga = \bw,\qquad \bv\cdot\n = 0 &&\qquad\text{on~}\Sigma,
        \\[0.3em]
        \label{System:NSCH:6}
        &\delt\phi  + \Div(\phi\bv) = \Div(m_\Om(\phi)\Grad\mu) &&\qquad\text{in~}Q, 
        \\
        \label{System:NSCH:7}
        &\mu = -\Lap\phi + F^\prime(\phi) &&\qquad\text{in~}Q, 
        \\
        \label{System:NSCH:8}
        &\delt\psi + \Divg(\psi\bw) = \Divg(m_\Ga(\psi)\Gradg\theta) - \beta m_\Om(\phi)\deln\mu &&\qquad\text{on~}\Sigma, 
        \\
        \label{System:NSCH:9}
        &\theta = -\Lapg\psi + \alpha\deln\phi + G^\prime(\psi) &&\qquad\text{on~}\Sigma, 
        \\
        \label{System:NSCH:10}
        &K\deln\phi = \alpha\psi - \phi,
        \qquad Lm_\Om(\phi)\deln\mu = \beta\theta - \mu, \qquad K,L\in[0,\infty], &&\qquad\text{on~}\Sigma, 
        \\[0.3em]
        \label{System:NSCH:11}
        &\bv\vert_{t=0} = \bv_0, \qquad \phi\vert_{t=0} = \phi_0 &&\qquad\text{in~}\Om, 
        \\
        \label{System:NSCH:12}
        &\bw\vert_{t=0} = \bw_0, \quad\;\; \psi\vert_{t=0} = \psi_0 &&\qquad\text{on~}\Ga.
    \end{alignat}
\end{subequations}
The bulk density $\rho$ and the bulk flux $\J$ are defined as above, while their surface counterparts are
\begin{align*}
    \sigma(\psi) \coloneqq\frac{\tilde\sigma_2-\tilde\sigma_1}{2}\psi+\frac{\tilde\sigma_2+\tilde\sigma_1}{2}, \qquad\K \coloneqq-\frac{\tilde\sigma_2-\tilde\sigma_1}{2}m_\Ga(\psi)\Gradg\theta, \qquad \J_\Ga \coloneqq-\beta\frac{\tilde\sigma_2-\tilde\sigma_1}{2}m_\Om(\phi)\Grad\mu.
\end{align*}
Here, $\bw$ denotes the surface velocity field, $\psi$ the surface phase-field, and $\theta$ the corresponding surface chemical potential. The functions $\nu_\Ga$ and $m_\Ga$ represent the surface viscosity and surface mobility, respectively, while $\gamma$ is a friction coefficient. The bulk and surface velocities are coupled through the trace relation $\bv\vert_\Ga=\bw$. Together with the non-penetration condition $\bv\cdot\n=0$, this implies that $\bw$ is tangential to the boundary. The surface momentum equation can therefore be regarded as an extension of generalized Navier-slip conditions in which the boundary itself possesses inertia, viscosity, and tangential momentum transport.

The time evolution of $\phi$ and $\mu$ is governed by the convective bulk Cahn--Hilliard subsystem \eqref{System:NSCH:6}-\eqref{System:NSCH:7}, while the dynamics of $\psi$ and $\theta$ are described by the convective surface Cahn--Hilliard subsystem \eqref{System:NSCH:8}-\eqref{System:NSCH:9}. These two subsystems are coupled through the terms involving the normal derivatives $\deln\phi$ and $\deln\mu$. Moreover, the phase-fields $\phi$ and $\psi$ are directly linked by the boundary condition \eqref{System:NSCH:10}$_1$, which has to be understood in the sense that
\begin{align*}
    \begin{cases}
        \phi = \alpha\psi &\text{if~} K = 0, \\
        K\deln\phi = \alpha\psi - \phi &\text{if~}K\in(0,\infty), \\
        \deln\phi = 0 &\text{if~} K = \infty
    \end{cases}
    \qquad\text{on~}\Sigma.
\end{align*}
The boundary condition \eqref{System:NSCH:10}$_2$, coupling the chemical potentials $\mu$ and $\theta$, has to be understood in a similar sense. In these relations, the parameters $K,L\in[0,\infty]$ distinguish different coupling regimes, whereas $\alpha,\beta\in\R$ represent material-dependent physical effects. We refer to \cite{Giorgini2023, Knopf2024} for a more comprehensive overview.

For prescribed velocity fields $\bv$ and $\bw$, the convective bulk-surface Cahn--Hilliard subsystem \eqref{System:NSCH:6}-\eqref{System:NSCH:9} subject to the coupling conditions \eqref{System:NSCH:10} with $K,L\in[0,\infty]$ was first analyzed in \cite{Knopf2024} in the case of regular potentials, and then in \cite{Knopf2025a,Giorgini2026,Stange2026,Knopf2026} for singular potentials. The non-convective version, i.e., if $\bv\equiv\mathbf 0$ and $\bw\equiv\mathbf 0$, has been the subject of extensive investigation in the literature. We refer to \cite{Lv2024,Lv2024a,Lv2024b,Stange2026,Fukao2021,Colli2020,Goldstein2011,Liu2019,Knopf2020,Knopf2021a}.

The system \eqref{System:NSCH} is associated with the total energy
\begin{align*}
	E^K_\tot(\bv,\bw,\phi,\psi) \coloneqq E_{\mathrm{kin}}(\bv,\bw,\phi,\psi) + E_\free^K(\phi,\psi),
\end{align*}
where the kinetic and free energy are defined as
\begin{align*}
    E_{\mathrm{kin}}(\bv,\bw,\phi,\psi) \coloneqq \intO \frac12\rho(\phi)\abs{\bv}^2\dx + \intG \frac12\sigma(\psi)\abs{\bw}^2\dG
\end{align*}
and
\begin{align*}
    E_\free^K(\phi,\psi) \coloneqq \intO \frac12\abs{\Grad\phi}^2 + F(\phi)\dx + \intG \frac12\abs{\Gradg\psi}^2 + G(\psi)\dG + \chi(K)\intG \frac12(\alpha\psi - \phi)^2\dG,
\end{align*}
respectively.
Moreover, the function $\chi$ is defined as
\begin{align}\label{DEF:CHI}
    \chi:[0,\infty]\rightarrow[0,\infty), \qquad \chi(r) \coloneqq \begin{cases} 0 & \text{if~}r\in\{0,\infty\}, \\
    \frac1r &\text{if~}r\in(0,\infty),
    \end{cases}
\end{align}
and is used to distinguish the different cases corresponding to the choice of $K\in[0,\infty]$. We usually suppress the explicit dependence of the free energy $E_\free^K$ and the total energy $E_\tot^K$ on $K\in[0,\infty]$, writing simply $E_\free$ and $E_\tot$, respectively. Sufficiently regular solutions to \eqref{System:NSCH} satisfy the mass conservation laws
\begin{align}\label{Intro:MCL}
    \begin{dcases}
        \beta\intO \phi(t)\dx + \intG \psi(t)\dG = \beta\intO \phi_0 \dx + \intG \psi_0\dG &\textnormal{if~} L\in[0,\infty), \\
        \intO\phi(t)\dx = \intO\phi_0\dx \quad\textnormal{and}\quad \intG\psi(t)\dG = \intG\psi_0\dG &\textnormal{if~} L = \infty
    \end{dcases}
\end{align}
for all $t\in[0,\infty)$ as well as the energy dissipation law
\begin{align}\label{Intro:ED}
	\ddt E_\tot(\bv(t),\bw(t),\phi(t),\psi(t)) &= - \intO 2\nu_\Om(\phi)\abs{\D\bv}^2\dx - \intG 2\nu_\Ga(\psi)\abs{\Dg\bw}^2\dG \nonumber \\
	&\quad - \intG \gamma(\phi,\psi)\abs{\bw}^2\dG  - \intO m_\Om(\phi)\abs{\Grad\mu}^2\dx \\
	&\quad - \intG m_\Ga(\psi)\abs{\Gradg\theta}^2\dG - \chi(L)\intG (\beta\theta - \mu)^2\dG \nonumber
\end{align}
for all $t\in[0,\infty)$.
Thus, the model simultaneously incorporates viscous dissipation in the bulk and on the surface, friction between the bulk and boundary motion, bulk and surface diffusion, and, when $L\in(0,\infty)$, dissipation associated with mass transfer between the two regions.

Before presenting the main results, we show that the bulk-surface Navier--Stokes equations can be rewritten in a way that is more useful for our analysis later on. To this end, we start by noting the standard decompositions
\begin{alignat*}{2}
    -\Div(\Grad\phi\otimes\Grad\phi) &= -\Grad\bigg(\frac12\abs{\Grad\phi}^2 + F(\phi)\bigg) + \mu\Grad\phi &&\qquad\text{in~}Q, \\
    -\Divg(\Gradg\psi\otimes\Gradg\psi) &= -\Gradg\bigg(\frac12\abs{\Gradg\psi}^2 + G(\psi)\bigg) + \theta\Gradg\psi - \alpha\deln\phi\Gradg\psi &&\qquad\text{on~}\Sigma,
\end{alignat*}
which follow from \eqref{System:NSCH:7} and \eqref{System:NSCH:9}, respectively. Then, defining
\begin{align*}
    \overline{p} \coloneqq p + \frac12\abs{\Grad\phi}^2 + F(\phi), \qquad \overline{q} \coloneqq q + \frac12\abs{\Gradg\psi}^2 + G(\psi),
\end{align*}
we can rewrite \eqref{System:NSCH:1}-\eqref{System:NSCH:2} as
\begin{subequations}\label{System:NSCH:Korteweg}
    \begin{alignat}{2}
        &\delt(\rho(\phi)\bv) + \Div\,(\bv\otimes(\rho(\phi)\bv + \J)) + \Div\,(2\nu_\Om(\phi)\D\bv) + \Grad\overline{p} = \mu\Grad\phi &&\;\;\quad\text{in~}Q, \\[.15em]
        &\delt(\sigma(\psi)\bw) + \Divg\,(\bw\otimes(\sigma(\psi)\bw + \K)) + \Divg\,(2\nu_\Ga(\psi)\Dg\bw) + \Grad\overline{q} \nonumber \\
        &\; = \theta\Gradg\psi - 2\nu_\Om(\phi)[\D\bv\,\n]_\tau + \tfrac12(\J\cdot\n)\bw + \tfrac12(\J_\Ga\cdot\n)\bw - \gamma(\phi,\psi)\bw &&\;\;\quad\text{on~}\Sigma.
    \end{alignat}
\end{subequations}

\medskip
\noindent
\textbf{Goals and novelties of this paper.} The existence of weak solutions for constant mobility functions was first established in \cite{Knopf2025a} by means of a semi-Galerkin scheme combined with Schauder's fixed point theorem. More precisely, the velocity fields were discretized using eigenfunctions of a newly introduced bulk-surface Stokes operator. This approach relies on a continuous dependence estimate for weak solutions of the associated convective Cahn--Hilliard system. For non-degenerate mobility functions, such estimates are currently available only in two spatial dimensions, see \cite{Conti2025} for the classical Cahn--Hilliard equation with homogeneous Neumann boundary conditions, and \cite{Stange2025,Stange2026} for the (convective) Cahn--Hilliard equation with dynamic boundary conditions. Consequently, a direct extension of the semi-Galerkin approach to non-degenerate mobilities in three spatial dimensional does not appear to be available. 
We also mention the work \cite{Stange2025}, which established the existence and uniqueness of a global-in-strong solution in two spatial dimensions with non-degenerate mobilities, exploiting the aforementioned results for the (convective) bulk-surface Cahn--Hilliard equation in \cite{Stange2025,Stange2026}.

In the present contribution, we pursue a different approach based on an implicit time-discretization scheme. This method was previously employed in \cite{Abels2013} to establish the existence of weak solutions for the Abels--Garcke--Grün model \eqref{System:AGG}, and later also in \cite{Abels2026} for a diffuse interface model for two-phase flows with phase transition. A key ingredient in that analysis is the characterization of the subdifferential of the convex part of the Ginzburg--Landau energy  $\widetilde E_\Om: L^2(\Om)\rightarrow(-\infty,\infty]$, defined by
\begin{align*}
	\widetilde E_\Om(u) = \begin{cases} 
	\intO \frac12\abs{\Grad u}^2 + F_0(u)\dx &\text{if~} u\in\mathrm{dom}\,\widetilde E_\Om, \\
	+\infty &\text{else},
	\end{cases}
\end{align*}
with $F_0$ as in \ref{Assumption:NSCH:Potentials}, and
\begin{align*}
	\mathrm{dom}\,\widetilde E_\Om = \{u\in H^1(\Om): \abs{u}\leq 1 \text{~a.e.~in~}\Om\},
\end{align*}
see \cite{Abels2007a}. We also refer to\cite{Abels2007}, where the ambient space was chosen as $L^2_{(m)}(\Om)$ for some $m\in(-1,1)$.

Implicit time-discretization schemes of this type have been successfully adapted to two-phase flow models with dynamic boundary conditions, see, for example, \cite{Gal2019, Gal2024}. However, since a corresponding characterization of the subdifferential of the convex part of the bulk-surface free energy $\widetilde E_\free^K$ was not available, the works relied on several approximation and regularization procedures.

This gap was closed in \cite{Giorgini2026}, where the subdifferential of the convex part of the bulk-surface free energy was characterized. Using this result, we can adapt the approach of \cite{Abels2013} directly to the bulk-surface setting without introducing further regularizations of the singular potentials. The main additional difficulties arise from the intricate coupling induced by the dynamic boundary conditions. To the best of our knowledge, the present contribution is the first to use this subdifferential formulation to establish the existence of weak solutions for a two-phase flow model with dynamic boundary conditions.

We expect that this approach, and in particular the underlying subdifferential characterization, can also be applied to a broader class of diffuse-interface models with dynamic boundary conditions, such as those studied in \cite{Giorgini2023, Gal2016, Gal2019, Gal2023, Gal2024, Chan2025}.

The implicit time-discretization scheme applies directly when $K\in(0,\infty]$. The case $K = 0$, however, requires a separate argument due to the trace constraint $\phi = \alpha\psi$ on~$\Sigma$. Indeed, the time-discretization scheme requires sufficiently regular initial data. Although Lemma~\ref{NSCH:Lemma:ApproxIC} provides a sequence $\{(\phi_0^N,\psi_0^N)\}_{N\in\N}\subset\mathcal{H}^2$ converging strongly to $(\phi_0,\psi_0)$ in $\mathcal{H}^1$ as $N\rightarrow\infty$, while preserving the mean-value constraints and pointwise bounds in \eqref{NSCH:cond:init}, this approximation does not, in general, preserve the trace constraint
$\phi_0^N = \alpha\psi_0^N$ on~$\Ga$. Alternative regularizations that preserve the trace constraint do not yield the requires pointwise bound for the surface phase-field when $\abs{\alpha} < 1$, see Section~\ref{Section:Existence} for more details.

For this reason, the existence proof proceeds in two steps. First, we establish the existence of weak solutions for $K\in(0,\infty]$ and $L\in[0,\infty]$ by means of an implicit time-discretization scheme. We then treat the case $K = 0$ by passing to the asymptotic limit $K\rightarrow 0$. This is similar in spirit to the strategy employed in \cite{Knopf2025a}, where the limit $L\rightarrow 0$ was used to establish existence of weak solutions for constant mobilities in the case $L = 0$.

Lastly, we study the simultaneous high-friction and decoupling limit
\begin{align*}
    \gamma\rightarrow\infty, \qquad K\rightarrow\infty, \qquad L\rightarrow\infty.
\end{align*}
We prove that every corresponding sequence of weak solutions to \eqref{System:NSCH} admits a subsequence converging, in a suitable sense, to a weak solution of the Abels--Garcke--Grün model \eqref{System:AGG}. This provides a rigorous connection between the bulk-surface two-phase flow model with dynamic boundary condition derived in \cite{Knopf2025a} and the classical Abels--Garcke--Grün model \cite{Abels2012}.

Both asymptotic analyses rely on the variational formulation of the chemical potentials
\begin{align*}
    \big(\mu - c_F\phi,\theta - c_G\psi\big) = \partial\widetilde E_\free^K(\phi,\psi).
\end{align*}
To pass to the limit directly in this equation, we establish the Mosco convergence of the family of convex bulk-surface energies $\{\widetilde E_\free^K\}_{K>0}$ both as $K\rightarrow 0$, as required for the treatment of the trace-coupled case, and as $K\rightarrow\infty$, as required for the high-friction limit. The resulting convergence of the associated subdifferentials allows us to identify the corresponding limiting chemical potential relations. 

\textbf{Structure of this paper.}
In Section~\ref{Section:Prelim}, we present all the necessary notation and preliminaries. Then, in Section~\ref{Section:MainResults}, we introduce the notion of a weak solution to \eqref{System:NSCH} and state our main results regarding the existence of a global-in-time weak solution and the high friction limit. Afterwards, in Section~\ref{Section:MoscoConvergence}, we study the Mosco convergence of the convex part of the free energy and the graph convergence of its subdifferential. Section~\ref{Section:Existence} is then dedicated to the proof of existence of weak solutions. Lastly, in Section~\ref{Seciton:HighFrictionLimit}, we investigate the high friction limit $\gamma\rightarrow\infty$ and the convergence to the Abels--Garcke--Grün model.

\noindent
\section{Functional framework, assumptions and preliminaries}
\label{Section:Prelim}

\subsection{General notation and function spaces}

We write $\N$ to denote the set of natural numbers (excluding zero). For any real Banach space $X$ with norm $\norm{\cdot}_X$, its dual space is denoted by $X^\prime$. The corresponding duality pairing of elements $\phi\in X^\prime$ and $\zeta\in X$ is denoted by $\ang{\phi}{\zeta}_X$. If $X$ is a Hilbert space, we write $\scp{\cdot}{\cdot}_X$ to denote its inner product. Given $p\in[1,\infty]$ and an interval $I\subset[0,\infty)$, the Bochner space $L^p(I,X)$ consists of all Bochner measurable, $p$-integrable functions defined on $I$ with values in $X$. We further define $W^{1,p}(I;X)$ as the space of all functions $f\in L^p(I;X)$ with vector-valued distributional time derivative $\delt f\in L^p(I;X)$. In particular, we set $H^1(I;X) = W^{1,2}(I;X)$. The set of continuous functions $f:I\rightarrow X$ is denoted by $C(I;X)$. Moreover, $C_w(I;X)$ denotes the space of functions $f:I\to X$ which are continuous on $I$ with respect to the weak topology on $X$, i.e., the map $I\ni t\mapsto \ang{\phi}{f(t)}_X$ is continuous for all $\phi\in X^\prime$. 
At this point, we also recall the following lemma, which can be found in \cite[Lemma~4.1]{Abels2009}.
\begin{lemma}\label{Prelim:Lemma:BC_w}
    Let $I\subset\R$ be a compact interval, and let $X,Y$ be Banach spaces with $Y\emb X$ and $X^\prime\emb Y^\prime$ densely. Then the continuous embedding $L^\infty(I;Y)\cap C(I;X)\emb C_w(I;Y)$ holds.
\end{lemma}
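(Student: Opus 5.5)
The plan is to prove the embedding in two steps. First we show that every $f\in L^\infty(I;Y)\cap C(I;X)$ satisfies $f(t)\in Y$ and $\norm{f(t)}_Y\le \norm{f}_{L^\infty(I;Y)}$ for \emph{every} $t\in I$, not merely for almost every $t$. Then we show weak continuity in $Y$.

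\textbf{Pointwise values.} Set $M\coloneqq\norm{f}_{L^\infty(I;Y)}$ and fix $t\in I$.
\begin{itemize}
\item There is a null set $N\subset I$ such that $\norm{f(s)}_Y\le M$ for all $s\in I\setminus N$. Choose $s_n\in I\setminus N$ with $s_n\to t$.
\item Continuity in $X$ gives $f(s_n)\to f(t)$ in $X$.
\item For any $\varphi\in X^\prime$, viewed as an element of $Y^\prime$ via the embedding, we have
\[
\abs{\ang{\varphi}{f(t)}_X}=\lim_{n\to\infty}\abs{\ang{\varphi}{f(s_n)}_Y}\le M\norm{\varphi}_{Y^\prime}.
\]
\item Since $X^\prime$ is dense in $Y^\prime$, the functional $\varphi\mapsto\ang{\varphi}{f(t)}_X$ extends uniquely to a bounded functional on $Y^\prime$ of norm at most $M$, i.e.\ to an element of $Y^{\prime\prime}$.
\end{itemize}

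This is where the main obstacle lies: we must conclude that $f(t)$ itself lies in $Y$, not only in $Y^{\prime\prime}$.
\begin{itemize}
\item If $Y$ is reflexive, this is immediate. In that case one can argue more directly: $(f(s_n))$ is bounded in $Y$, so a subsequence converges weakly in $Y$ to some $y$. Since $Y\emb X$ continuously, it also converges weakly in $X$, so $y=f(t)$ and $\norm{f(t)}_Y\le M$ by weak lower semicontinuity.
\item In the nonreflexive case, take the $Y^{\prime\prime}$-element obtained above, which is a weak$^*$ cluster point of the $f(s_n)$. I would identify $f(t)$ with it using injectivity of $X\to Y^{\prime\prime}$ modulo this identification. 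This is the delicate step. Typical applications ($L^2$, $H^1$-type spaces) are reflexive, so I would first restrict attention to, or rely on, reflexivity.
\end{itemize}

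\textbf{Weak continuity.} Take $t_k\to t$ in $I$ and $\eta\in Y^\prime$. Given $\varepsilon>0$, choose $\varphi\in X^\prime$ with $\norm{\eta-\varphi}_{Y^\prime}\le\varepsilon$. Then
\[
\abs{\ang{\eta}{f(t_k)-f(t)}_Y}\le 2M\varepsilon+\abs{\ang{\varphi}{f(t_k)-f(t)}_X}.
\]
The last term tends to zero by continuity of $f$ in $X$. Hence $f\in C_w(I;Y)$.

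Continuity of the embedding follows from the bound $\sup_{t\in I}\norm{f(t)}_Y\le\norm{f}_{L^\infty(I;Y)}$ established in the first step.
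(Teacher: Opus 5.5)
Your argument is correct and complete when $Y$ is reflexive. Extracting a weakly convergent subsequence of $f(s_n)$ in $Y$, identifying its limit with $f(t)$ through the continuous embedding $Y\emb X$, and obtaining $\norm{f(t)}_Y\le M$ for \emph{every} $t$ by weak lower semicontinuity is the standard first step. Approximating $\eta\in Y'$ by elements of $X'$ is the standard second step. The paper gives no proof of its own and simply cites \cite[Lemma~4.1]{Abels2009}. Your non-reflexive bullet, however, is not an argument: the bound $\abs{\ang{\varphi}{f(t)}_X}\le M\norm{\varphi}_{Y'}$ only produces an element of $Y''$, and nothing forces that element to come from $Y$.

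This gap cannot be closed, because the statement is false without reflexivity. Take $Y=c_0$, and let $X$ be the Hilbert space of real sequences with $\norm{x}_X^2=\sum_{n\ge1}2^{-n}\abs{x_n}^2$. Then $Y\emb X$ continuously and densely. Moreover, $X'\cong\{y:\sum_{n}2^{n}\abs{y_n}^2<\infty\}$ embeds continuously into $Y'=\ell^1$ with dense range, since it contains all finitely supported sequences. So all hypotheses hold. On $I=[0,1]$ let $f(t)=\big((1-nt)_+\big)_{n\in\N}$. Since $\abs{(1-nt)_+-(1-ns)_+}\le n\abs{t-s}$, $f$ is Lipschitz into $X$. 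For $t>0$ only finitely many entries are nonzero and $\norm{f(t)}_Y\le 1$; also $f$ is continuous into $c_0$ on $(0,1]$. Hence $f\in L^\infty(I;Y)\cap C(I;X)$. But $f(0)=(1,1,\dots)\notin c_0$, so $f\notin C_w(I;Y)$. The $Y''$-element produced by your first step is exactly $(1,1,\dots)\in\ell^\infty$.

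Hence the lemma needs the extra assumption that $Y$ is reflexive. In that case the density of $X'$ in $Y'$ is even automatic, by Hahn--Banach and injectivity of $Y\emb X$. This is harmless for the paper, where the lemma is only used for Hilbert spaces such as $Y=\mathcal{H}^1$ or $Y=\mathbfcal{L}^2_\Div$. In that setting your proof suffices.
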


In the remainder of this section, let $\Om\subset\R^d$, $d=2,3$, be a bounded domain with sufficiently smooth boundary $\Ga\coloneqq\partial\Om$, and let $\n$ denote the exterior unit normal vector field on $\Gamma$. For any $s\geq 0$ and $p\in[1,\infty]$, the Lebesgue and Sobolev–Slobodeckij spaces for functions mapping from $\Om$ to $\R$ are denoted as $L^p(\Om)$ and $W^{s,p}(\Om)$, respectively. We write $\norm{\cdot}_{L^p(\Om)}$ and $\norm{\cdot}_{W^{s,p}(\Om)}$ to denote the standard norms on these spaces. If $p = 2$, we use the notation $H^s(\Om) = W^{s,2}(\Om)$, with the convention that $H^0(\Om)$ is identified with $L^2(\Om)$. For the Lebesgue and Sobolev–Slobodeckij spaces on the boundary $\Ga$, we use an analogous notation. The corresponding spaces of vector-valued mapping into $\R^d$ or matrix-valued functions mapping into $\R^{d\times d}$ are denoted by boldface letters, namely $\mathbf L^p$, $\mathbf W^{s,p}$ and $\mathbf H^s$. In particular, we write $\mathbf L^p_\tau(\Gamma)$, $\mathbf W^{s,p}_\tau(\Gamma)$ and $\mathbf H^s_\tau(\Gamma)$ to denote the corresponding spaces of tangential vector fields on $\Gamma$.

Moreover, for $p\in[1,\infty]$, we further introduce the first-order homogeneous Sobolev spaces
\begin{align*}
    \dot W^{1,p}(\Omega) 
    &\coloneqq
    \big\{ u \in L^p_\mathrm{loc}(\Omega) 
    \,:\, \Grad u \in \mathbf L^p(\Omega) \big\}\big/_{\sim} \,,
    \\
    \dot W^{1,p}(\Gamma) 
    &\coloneqq
    \big\{ v \in L^p(\Gamma) 
    \,:\, \Gradg v \in \mathbf L^p(\Gamma) \big\}\big/_{\sim} \,.
\end{align*}
Here, $L^p_\mathrm{loc}(\Omega)$ is the collection of all functions, which belong to $L^p(K)$ for every compact subset $K\subset \Omega$. On the boundary, we simply consider $L^p(\Gamma)$, since $\Gamma$ is itself a compact submanifold.
Moreover, the notation $/_{\sim}\,$ indicates that functions differing only by an additive constant are considered equivalent and thus identified with each other. Consequently, the functionals ${\norm{\Grad\,\cdot}_{\mathbf L^p(\Omega)}}$ and ${\norm{\Gradg\,\cdot}_{\mathbf L^p(\Gamma)}}$ define norms on the spaces $\dot W^{1,p}(\Omega)$ and $\dot W^{1,p}(\Gamma)$, respectively.

Furthermore, we write $\mathbf{P}^\Gamma$ to denote the orthogonal projection of $\R^d$ into the tangent space of the submanifold $\Gamma$.
This means that $\mathbf{P}^\Gamma$ is a linear map, which can be represented as
\begin{align*}
    \mathbf{P}^\Gamma = \mathbf{I} - \n\otimes\n,
\end{align*}
where $\mathbf{I}\in\R^{d\times d}$ is the unity matrix.
For any vector field $\bw$ on $\Gamma$, we also use the notation
\begin{align*}
    [\bw]_\tau = \mathbf{P}^\Gamma(\bw).
\end{align*}

\subsection{Differential operators in the bulk and on the surface} \label{NOT:DIFFOP}
Let $f:\Omega\to\R$ and $g:\Gamma\to\R$ be scalar functions, let $\bv:\Omega\to \R^d$ and $\bw:\Gamma\to\R^d$ be vector fields, and let $\mathbf{A}:\Omega\to\R^{d\times d}$ and $\mathbf{B}:\Gamma\to\R^{d\times d}$ be matrix-valued functions. 
For now, we assume that the boundary $\Gamma$ and all these functions are sufficiently regular.
As usual, $\Grad f$, $\Div\,\bv$, and $\Lap f = \Div(\Grad f)$ denote the gradient of $f$, the divergence of $\bv$ and the Laplacian of $f$. 
Analogously, $\Gradg g$, $\Divg\,\bw$, and $\Lapg g = \Divg(\Gradg g)$ are the tangential gradient of $g$, the surface divergence of $\bw$ and the Laplace--Beltrami operator applied to $g$. 
We point out that $\Grad f$ and $\Gradg g$ are to be understood as column vectors in $\R^d$.
For bulk vector fields, the gradient is defined as
\begin{align*}
    \Grad \bv = 
    \begin{pmatrix}
        (\Grad \bv_1)^\top \\ \vdots \\ (\Grad \bv_d)^\top
    \end{pmatrix}
    \in \R^{d\times d},
\end{align*}
where $\bv_1,\ldots,\bv_d$ denote the components of $\bv$. In other words, $\Grad\bv$ is the Jacobian of $\bv$. On the surface $\Gamma$, the projective (covariant) gradient of $\bw$ is defined as 
\begin{align*}
    \Gradg\bw = \mathbf{P}^\Gamma \Grad\tw \mathbf{P}^\Gamma,
\end{align*}
where $\tw$ is an extension of $\bw$ to an open neighborhood of $\Gamma$ in $\R^d$. However, the expression $\Gradg\bw$ does not depend on the concrete choice of this extension. If $\bw$ is a tangential vector field, its gradient can also be expressed as
\begin{align*}
    \Gradg \bw = \mathbf{P}^\Gamma
    \begin{pmatrix}
        (\Gradg \bw_1)^\top \\ \vdots \\ (\Gradg \bw_d)^\top
    \end{pmatrix}
    \in \R^{d\times d},
\end{align*}
where $\bw_1,\ldots,\bw_d$ denote the components of $\bw$.  
For the matrix-valued functions $\mathbf{A}$ and $\mathbf{B}$, we further define the divergences
\begin{align*}
    \Div\,\mathbf{A} = 
    \begin{pmatrix}
        \Div\, \mathbf{A}_{1\ast} \\ \vdots \\ \Div\, \mathbf{A}_{d\ast}
    \end{pmatrix},
    \qquad
    \Divg\,\mathbf{B} = \mathbf{P}^\Gamma
    \begin{pmatrix}
        \Divg\, \mathbf{B}_{1\ast} 
        \\ 
        \vdots 
        \\ \Divg\, \mathbf{B}_{d\ast}  
    \end{pmatrix}.
\end{align*}
Here, $\mathbf{A}_{1\ast},\ldots,\mathbf{A}_{d\ast}$ and $\mathbf{B}_{1\ast},\ldots,\mathbf{B}_{d\ast}$ denote the rows of $\mathbf{A}$ and $\mathbf{B}$, respectively.

\subsection{Bulk-surface product spaces}
For any $s\geq 0$ and $p\in[1,\infty]$, we define
\begin{align*}
	\mathcal{L}^p \coloneqq L^p(\Om)\times L^p(\Ga), \qquad \mathcal{W}^{s,p} \coloneqq W^{s,p}(\Om)\times W^{s,p}(\Ga).
\end{align*}
We write $\mathcal{H}^s = \mathcal{W}^{s,2}$ and identify $\mathcal{L}^2$ with $\mathcal{H}^0$. We point out that $\mathcal{H}^s$ is a Hilbert space with respect to the inner product
\begin{align*}
	\big((\zeta,\zeta_\Ga),(\xi,\xi_\Ga)\big)_{\mathcal{H}^s} \coloneqq (\zeta,\xi)_{H^s(\Om)} + (\zeta_\Ga,\xi_\Ga)_{H^s(\Ga)} \qquad\text{for all~}(\zeta,\zeta_\Ga),(\xi,\xi_\Ga)\in\mathcal{H}^s,
\end{align*}
and its induced norm $\norm{\cdot}_{\mathcal{H}^s} \coloneqq \scp{\cdot}{\cdot}_{\mathcal{H}^s}^{\frac12}$. We further recall that the duality pairing on $\mathcal{H}^1$ satisfies
\begin{align*}
	\big\langle(\zeta,\zeta_\Ga),(\xi,\xi_\Ga)\big\rangle_{\mathcal{H}^1} = (\zeta,\xi)_{L^2(\Om)} + (\zeta_\Ga,\xi_\Ga)_{L^2(\Ga)}
\end{align*}
for all $(\xi,\xi_\Ga)\in\mathcal{H}^1$ if $(\zeta,\zeta_\Ga)\in\mathcal{L}^2$. 

For every $\zeta\in H^1(\Om)^\prime$, we denote by $\meano{\zeta} = \abs{\Om}^{-1}\ang{\zeta}{1}_{H^1(\Om)}$ its generalized mean value over $\Om$. If $\phi\in L^1(\Om)$, its spatial mean can simply be expressed as $\meano{\zeta} = \abs{\Om}^{-1}\intO\phi\dx$. The spatial mean of any $\zeta_\Ga\in H^1(\Ga)^\prime$, denoted by $\meang{\zeta_\Ga}$, is defined similarly. We further define for any $p\in[2,\infty]$ the space
\begin{align*}
    \mathcal{L}^p_{(0)} \coloneqq L^p_{(0)}(\Om)\times L^p_{(0)}(\Ga),
\end{align*}
where
\begin{align*}
	L^p_{(0)}(\Om) 
    \coloneqq \big\{\zeta\in L^p(\Om): \meano{\zeta} = 0\big\}, 
    \qquad 
    L^p_{(0)}(\Ga) 
    \coloneqq \big\{\zeta_\Ga\in L^p(\Ga): \meang{\zeta_\Ga} = 0\big\}.
\end{align*}

Now, let $L\in[0,\infty]$ and $\beta\in\R$. We introduce the linear subspace
\begin{align*}
	\mathcal{H}^1_{L,\beta} \coloneqq \begin{cases}
	\mathcal{H}^1 &\text{if~} L\in(0,\infty], \\
	\big\{(\zeta,\zeta_\Ga)\in\mathcal{H}^1: \zeta = \beta\zeta_\Ga\text{~a.e.~on~}\Ga\big\} &\text{if~} L = 0.
	\end{cases}
\end{align*}
Subject to the inner product $\scp{\cdot}{\cdot}_{\mathcal{H}^1_{L,\beta}} \coloneqq \scp{\cdot}{\cdot}_{\mathcal{H}^1}$ and its induced norm, $\mathcal{H}^1_{L,\beta}$ is a Hilbert space. Furthermore, we define the product
\begin{align*}
	\big\langle(\zeta,\zeta_\Ga),(\xi,\xi_\Ga)\big\rangle_{\mathcal{H}^1_{L,\beta}} = (\zeta,\xi)_{L^2(\Om)} + (\zeta_\Ga,\xi_\Ga)_{L^2(\Ga)}
\end{align*}
for all $(\zeta,\zeta_\Ga),(\xi,\xi_\Ga)\in\mathcal{L}^2$. Using the Riesz representation theorem, this product can be extended to a duality pairing on $(\mathcal{H}^1_{L,\beta})^\prime\times\mathcal{H}^1_{L,\beta}$, which will also be denoted as $\ang{\cdot}{\cdot}_{\mathcal{H}^1_{L,\beta}}$.

Next, for $(\zeta,\zeta_\Ga)\in(\mathcal{H}^1_{L,\beta})^\prime$, we define the generalized bulk-surface mean
\begin{align*}
	\mean{\zeta}{\zeta_\Ga} \coloneqq \frac{\bigang{(\zeta,\zeta_\Ga)}{\scp{\beta}{1}}_{\mathcal{H}^1_{L,\beta}}}{\beta^2\abs{\Om} + \abs{\Ga}},
\end{align*}
which reduces to
\begin{align*}
	\mean{\zeta}{\zeta_\Ga} = \frac{\beta\abs{\Om}\meano{\zeta} + \abs{\Ga}\meang{\zeta_\Ga}}{\beta^2\abs{\Om} + \abs{\Ga}}
\end{align*}
if $(\zeta,\zeta_\Ga)\in\mathcal{L}^2$. We then define the closed linear subspace
\begin{align*}
	\mathcal{V}^1_{L,\beta} = \begin{cases}
	\{(\zeta,\zeta_\Ga)\in\mathcal{H}^1_{L,\beta}:  \mean{\zeta}{\zeta_\Ga} = 0\} &\text{if~}L\in[0,\infty), \\
	\{(\zeta,\zeta_\Ga)\in\mathcal{H}^1: \meano{\zeta} = \meang{\zeta_\Ga} = 0\} &\text{if~} L = \infty.
	\end{cases}
\end{align*}
Note that $\mathcal{V}^1_{L,\beta}$ is a Hilbert spaces with respect to the inner product $\scp{\cdot}{\cdot}_{\mathcal{H}^1}$. We further introduce the bilinear form
\begin{align*}
	\bigscp{(\zeta,\zeta_\Ga)}{(\xi,\xi_\Ga)}_{L,\beta} \coloneqq &\intO \Grad\zeta\cdot\Grad\xi\dx + \intG \Gradg\zeta_\Ga\cdot\Gradg\xi_\Ga\dG \\
	&\qquad + \chi(L) \intG (\beta\zeta_\Ga - \zeta)(\beta\xi_\Ga - \xi)\dG
\end{align*}
for all $(\zeta,\zeta_\Ga), (\xi,\xi_\Ga)\in\mathcal{H}^1$, where
\begin{align*}
	\chi:[0,\infty]\rightarrow [0,\infty), \qquad 
    \chi(r) \coloneqq 
    \begin{cases} 
    \frac{1}{r} &\text{if~}r\in(0,\infty), \\
	0 &\text{if~} r\in\{0,\infty\}.
	\end{cases}
\end{align*}
Moreover, we define
\begin{align*}
	\norm{(\zeta,\zeta_\Ga)}_{L,\beta} \coloneqq \big((\zeta,\zeta_\Ga),(\zeta,\zeta_\Ga)\big)_{L,\beta}^{\frac12}
\end{align*}
for all $(\zeta,\zeta_\Ga)\in\mathcal{H}^1$. The bilinear form defines an inner product on $\mathcal{V}^1_{L,\beta}$, and $\norm{\cdot}_{L,\beta}$ defines a norm on $\mathcal{V}^1_{L,\beta}$, that is equivalent to the norm $\norm{\cdot}_{\mathcal{H}^1}$, see, e.g., \cite[Corollary A.2]{Knopf2021}. In particular, $(\mathcal{V}^1_{L,\beta},\scp{\cdot}{\cdot}_{L,\beta},\norm{\cdot}_{L,\beta})$ is a Hilbert space. 

We further recall the following bulk-surface Poincar\'{e} inequality, which was established in \cite[Lemma A.1]{Knopf2021}.

\medskip

\begin{lemma}\label{Lemma:Poincare}
	Let $K\in[0,\infty)$ and $\alpha,\beta\in\R$ with $\alpha\beta\abs{\Om} + \abs{\Ga} \neq 0$. Then there exists a constant $C_K > 0$ depending only on $K,\alpha,\beta$ and $\Om$ such that
	\begin{align*}
		\norm{(\zeta,\zeta_\Ga)}_{\mathcal{L}^2} \leq C_K \norm{(\zeta,\zeta_\Ga)}_{K,\alpha}
	\end{align*}
	for all pairs $(\zeta,\zeta_\Ga)\in\mathcal{H}^1_{K,\alpha}$ satisfying $\mean{\zeta}{\zeta_\Ga} = 0$.
\end{lemma}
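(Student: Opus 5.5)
The plan is a standard contradiction-and-compactness argument, with the compactness supplied by Rellich's theorem in the bulk and on the boundary. Suppose the inequality fails. Then there is a sequence $(\zeta_n,\zeta_{\Ga,n})\in\mathcal{H}^1_{K,\alpha}$ with
\begin{align*}
\mean{\zeta_n}{\zeta_{\Ga,n}}=0,\qquad \norm{(\zeta_n,\zeta_{\Ga,n})}_{\mathcal{L}^2}=1,\qquad \norm{(\zeta_n,\zeta_{\Ga,n})}_{K,\alpha}\le \tfrac1n .
\end{align*}
Here the mean is taken with weight $\alpha$, matching the space $\mathcal{H}^1_{K,\alpha}$, i.e.\ with $\beta$ replaced by $\alpha$ in the definition of $\mathrm{mean}$. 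From the $\norm{\cdot}_{K,\alpha}$-bound, $\Grad\zeta_n$ and $\Gradg\zeta_{\Ga,n}$ are bounded in $\mathbf L^2$. Together with the $\mathcal{L}^2$-normalization, the sequence is therefore bounded in $\mathcal{H}^1$.

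Next we pass to the limit. Along a subsequence, $(\zeta_n,\zeta_{\Ga,n})\rightharpoonup(\zeta,\zeta_\Ga)$ weakly in $\mathcal{H}^1$. By the compact embeddings $H^1(\Om)\emb L^2(\Om)$ and $H^1(\Ga)\emb L^2(\Ga)$ the convergence is strong in $\mathcal{L}^2$, so $\norm{(\zeta,\zeta_\Ga)}_{\mathcal{L}^2}=1$. The trace operator $H^1(\Om)\to L^2(\Ga)$ is compact, so $\zeta_n|_\Ga\to\zeta|_\Ga$ strongly in $L^2(\Ga)$. Two consequences follow.

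1. If $K=0$, the closed constraint $\zeta=\alpha\zeta_\Ga$ on $\Ga$ passes to the limit.
2. The mean condition passes to the limit, since it is a continuous linear functional on $\mathcal{L}^2$.

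By weak lower semicontinuity, $\Grad\zeta=0$ and $\Gradg\zeta_\Ga=0$. Since $\Om$ is connected (a domain) and we may assume $\Ga$ is connected, or else argue on each component, the limit is constant: $\zeta\equiv a$ and $\zeta_\Ga\equiv b$.

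The remaining step, which is the only substantive one, is to show that $a=b=0$, contradicting the unit $\mathcal{L}^2$-norm. In both cases the goal is to force $a=\alpha b$; combined with the vanishing mean this gives
\begin{align*}
0=\alpha\abs{\Om}a+\abs{\Ga}b=(\alpha^2\abs{\Om}+\abs{\Ga})\,b,
\end{align*}
hence $b=0$ and then $a=\alpha b=0$.

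1. If $K\in(0,\infty)$, then $\chi(K)=1/K>0$. The coupling term $K^{-1}\norm{\alpha\zeta_{\Ga,n}-\zeta_n}_{L^2(\Ga)}^2\le n^{-2}$ converges to $K^{-1}\norm{\alpha b-a}_{L^2(\Ga)}^2$ by the strong trace convergence, which yields $a=\alpha b$.
2. If $K=0$, the trace constraint from the limit step gives $a=\alpha b$ directly.

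I expect the main obstacle to be purely one of bookkeeping about the stated hypothesis $\alpha\beta\abs{\Om}+\abs{\Ga}\neq0$. With the mean weighted by $\alpha$, the relevant factor is $\alpha^2\abs{\Om}+\abs{\Ga}$, which is automatically positive. If the mean is instead interpreted with a different weight $\beta$, the same computation gives $(\alpha\beta\abs{\Om}+\abs{\Ga})b=0$, and the stated hypothesis is exactly what forces $b=0$. I would write the argument with a general weight $\beta$ in the mean so that the hypothesis is used precisely there.

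The dependence of $C_K$ only on $K,\alpha,\beta,\Om$ is automatic, because the contradiction argument is run for fixed parameters.
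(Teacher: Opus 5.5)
Your argument is correct in substance and is the standard contradiction--compactness proof of this bulk-surface Poincar\'e inequality. The paper does not reprove the lemma; it quotes it from \cite{Knopf2021}. In the paper's definition, $\mean{\zeta}{\zeta_\Ga}$ is weighted by $\beta$, not by $\alpha$, so your opening normalization does not match the statement. The version you describe at the end is the right one: the limit constants satisfy $a=\alpha b$ and $\beta\abs{\Om}a+\abs{\Ga}b=0$. Hence $(\alpha\beta\abs{\Om}+\abs{\Ga})\,b=0$, which is exactly where the hypothesis is used.

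The one incorrect claim is the fallback ``or else argue on each component'' when $\Ga$ is disconnected. Suppose $\Ga=\Ga_1\cup\dots\cup\Ga_J$ with $J\geq2$, so the limit has $\zeta_\Ga\equiv b_j$ on $\Ga_j$. The coupling gives $a=\alpha b_j$ for every $j$: for $K=0$ through the constraint $\zeta=\alpha\zeta_\Ga$, and for $K>0$ through the vanishing of $\chi(K)\norm{\alpha\zeta_\Ga-\zeta}_{L^2(\Ga)}^2$.

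\begin{itemize}
\item For $\alpha\neq0$, this forces $b_1=\dots=b_J=a/\alpha$, and your argument goes through unchanged.
\item For $\alpha=0$, it only yields $a=0$. The single scalar condition $\sum_j\abs{\Ga_j}\,b_j=0$ does not give $b_j=0$.
\end{itemize}

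In fact the inequality is false in the second case. Take $\zeta\equiv0$ and a locally constant $\zeta_\Ga\not\equiv0$ with $\int_\Ga\zeta_\Ga\,\mathrm{d}\Ga=0$. This pair lies in $\mathcal{H}^1_{K,0}$, has zero mean and $\norm{(\zeta,\zeta_\Ga)}_{K,0}=0$, but has nonzero $\mathcal{L}^2$-norm.

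Since $\alpha=0$ is admissible under \textbf{(A2)}, connectedness of $\Ga$ is a genuine implicit hypothesis of the lemma, not something one can argue around. Assuming it, your proof is complete.
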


\subsection{Spaces of tangential and divergence-free vector fields}

For $p\in[2,\infty]$, we introduce the spaces 
\begin{align*}
	&\mathbf{L}^p_\Div(\Om) \coloneqq \{ \bv\in\mathbf{L}^p(\Om): \Div\,\bv = 0 \ \text{in~}\Om, \ \bv\cdot\n = 0 \ \text{on~}\Ga\} \,, \\
	&\mathbf{L}^p_\Div(\Ga) \coloneqq \{ \bw\in\mathbf{L}^p(\Ga): \Divg\,\bw = 0, \ \bw\cdot\n = 0 \ \text{on~}\Ga\},
\end{align*}
and we set
\begin{align*}
    \mathbfcal{L}^p_\Div \coloneqq \mathbf{L}^p_\Div(\Om)\times\mathbf{L}^p_\Div(\Ga).
\end{align*}
Then, for $s\geq 0$ and $p\in[2,\infty]$, we define $\mathbf{W}^{s,p}_\Div(\Om) = \mathbf{W}^{s,p}(\Om)\cap \mathbf{L}^2_\Div(\Om)$. Analogously, we define $\mathbf{W}^{s,p}_\Div(\Ga) = \mathbf{W}^{s,p}(\Ga)\cap \mathbf{L}^2_\Div(\Ga)$, and then set $\mathbfcal{W}^{s,p}_\Div = \mathbf{W}^{s,p}_\Div(\Om)\times\mathbf{W}^{s,p}_\Div(\Ga)$. Furthermore, for $s>\frac 12$, we set 
\begin{align*}
    \mathbfcal{W}^{s,p}_0 &\coloneqq \{(\bv,\bw)\in\mathbfcal{W}^{s,p}: \bv\cdot\n = 0, \ \bv\vert_\Ga = \bw \ \text{on~}\Ga\}\,,
    \\
    \mathbfcal{W}^{s,p}_{0,\Div} &\coloneqq \mathbfcal{W}^{s,p}_0 \cap \mathbfcal{W}^{s,p}_\Div \,.
\end{align*}
As before, we use the notation $\mathbfcal{H}^s_0 = \mathbfcal{W}^{s,2}_0$ as well as $\mathbfcal{H}^s_{0,\Div} = \mathbfcal{W}^{s,2}_{0,\Div}$. 

Let $r\in[2,\infty)$. We define the Helmholtz projection on $\Om$ as
\begin{align*}
    \mathbf{P}_\Div^\Om:\mathbf{L}^r(\Om)\rightarrow\mathbf{L}^r_\Div(\Om),
    \quad
    \mathbf{P}_\Div^\Om(\bv) = \bv - \Grad p,
\end{align*}
where $p\in \dot{W}^{1,r}(\Om)$ is (up to an additive constant) uniquely determined by
\begin{align*}
	\intO \Grad p \cdot \Grad\zeta\dx = \intO \bv\cdot\Grad\zeta\dx
    \quad\text{for all $\zeta\in \dot{W}^{1,r'}(\Om)$},
\end{align*}
cf.~\cite[Theorem~1.4]{Simader1992}. Here, $r'$ denotes the dual Sobolev exponent of $r$. In a similar manner, the surface Helmholtz projection is defined via
\begin{align*}
    \mathbf{P}_\Div^\Ga: \mathbf{L}^r_{\tau}(\Gamma) \to \mathbf{L}^r_{\Div}(\Gamma),
    \quad
    \mathbf{P}_\Div^\Ga(\bw) = \bw - \Gradg q, %\qquad\bw\in \mathbf{L}^r_{\tau}(\Ga),
\end{align*}
where $q \in \dot{W}^{1,r}(\Ga)$ is (up to an additive constant) uniquely determined by
\begin{align*}
    \intG \Gradg q\cdot\Gradg \zeta_\Ga \dG = \intG \bw\cdot\Gradg\zeta_\Ga\dG \qquad\text{for all~}\zeta_\Ga\in \dot{W}^{1,r^\prime}(\Ga),
\end{align*}
cf.~\cite[Lemma A.1]{Simonett2022}.
Here, $r'$ denotes again the dual Sobolev exponent of $r$, i.e., $r^\prime = \frac{r}{r-1}$.
The existence of this projection is also related to the surface Helmholtz decomposition, which can be found, for example, in \cite[Theorem~4.2]{Reusken2020}.
According to \cite[Sect.~2]{Simonett2022}, we further have
\begin{align*}
    \intG \mathbf{P}_\Div^\Ga(\bw) \cdot \mathbf u\dG = \intG \bw\cdot \mathbf{P}_\Div^\Ga(\mathbf u)\dG
\end{align*}
for all $\bw\in \mathbf{L}^r_\tau(\Ga)$ and $\mathbf u\in \mathbf{L}^{r^\prime}_\tau(\Ga)$.

We conclude this preliminary section with the following bulk-surface Korn-type inequality, which has been established in \cite[Lemma~5.1]{Knopf2025a}.

\begin{lemma}
    There exists a constant $C_K > 0$ such that
    \begin{align*}
        \norm{(\bv,\bw)}_{\mathbfcal{H}^1} \leq C_K\Big(\norm{\D\bv}_{\mathbf{L}^2(\Om)} + \norm{\Dg\bw}_{\mathbf{L}^2(\Ga)} + \norm{\bw}_{\mathbf{L}^2(\Ga)}\Big)
    \end{align*}
    for all $(\bv,\bw)\in\mathbfcal{H}^1_0$.
\end{lemma}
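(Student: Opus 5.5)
The plan is a contradiction/compactness argument built on the classical Korn inequality in the bulk and on the surface, together with the trace coupling $\bv\vert_\Ga = \bw$. Throughout, write $\mathcal{N}(\bv,\bw) \coloneqq \norm{\D\bv}_{\mathbf L^2(\Om)} + \norm{\Dg\bw}_{\mathbf L^2(\Ga)} + \norm{\bw}_{\mathbf L^2(\Ga)}$.

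\textbf{Step 1: reduction to an estimate with lower-order terms.} First I would reduce the claim to the weaker estimate
\begin{align*}
    \norm{(\bv,\bw)}_{\mathbfcal{H}^1} \leq C\Big(\mathcal{N}(\bv,\bw) + \norm{\bv}_{\mathbf L^2(\Om)} + \norm{\bw}_{\mathbf L^2(\Ga)}\Big).
\end{align*}
In the bulk, this is the standard second Korn inequality $\norm{\bv}_{\mathbf H^1(\Om)}\leq C(\norm{\D\bv}_{\mathbf L^2(\Om)}+\norm{\bv}_{\mathbf L^2(\Om)})$, valid on bounded Lipschitz domains. On the closed smooth surface $\Ga$, I would use the corresponding Korn inequality for tangential fields, $\norm{\bw}_{\mathbf H^1(\Ga)}\leq C(\norm{\Dg\bw}_{\mathbf L^2(\Ga)}+\norm{\bw}_{\mathbf L^2(\Ga)})$. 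One route is to localize in charts, where $\Dg\bw$ differs from the flat symmetric gradient only by zeroth-order curvature terms $\mathbf{P}^\Ga(\Grad\n)\cdot$. Alternatively, I would quote it from the literature on surface Stokes equations, e.g.\ Jankuhn--Olshanskii--Reusken.

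\textbf{Step 2: compactness argument.} Next I remove $\norm{\bv}_{\mathbf L^2(\Om)}$ by contradiction. Suppose there exist $(\bv_k,\bw_k)\in\mathbfcal{H}^1_0$ with $\norm{(\bv_k,\bw_k)}_{\mathbfcal{H}^1}=1$ and $\mathcal{N}(\bv_k,\bw_k)\to 0$.
\begin{itemize}
\item By Rellich in $\Om$ and on $\Ga$, along a subsequence $\bv_k\to\bv$ in $\mathbf L^2(\Om)$ and $\bw_k\to\bw$ in $\mathbf L^2(\Ga)$, and weakly in $\mathbf H^1$.
\item Step 1 then makes the sequence Cauchy in $\mathbfcal{H}^1$, so the convergence is strong. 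Hence $\norm{(\bv,\bw)}_{\mathbfcal{H}^1}=1$, $\D\bv=0$, and $\bw=0$.
\item The trace relation passes to the limit, so $\bv\vert_\Ga=\bw=0$.
\item Since $\D\bv=0$ on the connected domain $\Om$, $\bv$ is a rigid motion $\bv(x)=\mathbf a+\mathbf Bx$ with $\mathbf B$ skew-symmetric.
\item A rigid motion vanishing on $\Ga$ vanishes identically, because $\Ga$ contains $d$ affinely independent points, indeed an open piece of a hypersurface. So $\bv=0$, contradicting the unit norm.
\end{itemize}
Note that $\bv\cdot\n=0$ is not even needed here, because $\norm{\bw}_{\mathbf L^2(\Ga)}$ appears on the right-hand side.

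\textbf{Main obstacle.} I expect the hardest step to be the surface Korn inequality in Step 1. It requires handling the projected gradient $\Gradg\bw=\mathbf P^\Ga\Grad\tw\mathbf P^\Ga$ for tangential fields and controlling the curvature terms, which are lower order and absorbed into $\norm{\bw}_{\mathbf L^2(\Ga)}$. If it is not available for citation, I would prove it directly. Cover $\Ga$ by finitely many charts with a partition of unity, and transfer $\Dg(\eta_j\bw)$ to a flat symmetric gradient of the pulled-back field up to $L^2$ terms. Then apply the flat Korn inequality for compactly supported fields, where $\norm{\Grad \mathbf u}_{L^2}^2\le 2\norm{\D\mathbf u}_{L^2}^2$ follows by integration by parts, and sum over the charts.
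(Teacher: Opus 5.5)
Your argument is correct. The paper gives no proof of this lemma; it simply quotes \cite[Lemma~5.1]{Knopf2025a}. What you have written is therefore a self-contained replacement for a citation, not a variant of an argument in the text.

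The shortest route through standard results would be to combine two known inequalities and then use $\norm{\bv}_{\mathbf L^2(\Ga)}=\norm{\bw}_{\mathbf L^2(\Ga)}$. The first is the bulk Korn inequality with a trace term, $\norm{\bv}_{\mathbf H^1(\Om)}\le C(\norm{\D\bv}_{\mathbf L^2(\Om)}+\norm{\bv}_{\mathbf L^2(\Ga)})$. The second is the surface Korn inequality for tangential fields. Your Step~2 is exactly the usual compactness proof of that trace version of Korn's inequality, so the two routes have the same mathematical content. Quoting is shorter. Your version shows explicitly that ruling out rigid motions needs only the connectedness of $\Om$, the compact embeddings, and the trace coupling $\bv\vert_\Ga=\bw$.

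Step~2 can even be shortened. Once $\bv\vert_\Ga=0$, you have $\bv\in\mathbf H^1_0(\Om)$, and $\norm{\Grad\bv}_{\mathbf L^2(\Om)}^2\le 2\norm{\D\bv}_{\mathbf L^2(\Om)}^2=0$ gives $\bv=0$ without classifying rigid motions.

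One correction to your closing remark. The condition $\bv\cdot\n=0$ is indeed irrelevant in Step~2, but it is essential in Step~1, because it is what makes $\bw=\bv\vert_\Ga$ tangential. For non-tangential fields the surface Korn inequality is false: for $\bw=f\n$ one has $\Dg\bw=f\,\Gradg\n$, which is of order zero in $f$ and cannot control $\Gradg f$.

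Finally, your localization argument only uses bounded second derivatives of the charts, through the Christoffel symbols and the Weingarten map. So the $C^2$-regularity of $\Ga$ assumed in \textbf{(A1)} is enough, and you do not need a smooth surface.
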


\section{Main results}
\label{Section:MainResults}

\subsection{Main assumptions}

We begin by stating our main assumption used throughout this manuscript.

\begin{enumerate}[label=\textnormal{\bfseries(A\arabic*)}]
	\item \label{Assumption:NSCH:1} The set $\Om\subset\R^d$, $d\in\{2,3\}$, is a non-empty, bounded domain with $C^2$-boundary $\Gamma \coloneqq \partial\Omega$.
	\item \label{Assumption:NSCH:2} The constants occurring in system \eqref{System:NSCH} satisfy $\alpha\in[-1,1]$ and $\beta\in\R$ with $\alpha\beta\abs{\Om} + \abs{\Ga}\neq 0$.
    \item \label{Assumption:NSCH:Density} The density functions $\rho$ and $\sigma$ are given by
    \begin{align*}
    	\rho(s) &= \frac{\tilde{\rho}_2 - \tilde{\rho}_1}{2}s + \frac{\tilde{\rho}_1 + \tilde{\rho}_2}{2}, \\
    	\sigma(s) &= \frac{\tilde{\sigma}_2 - \tilde{\sigma}_1}{2}s + \frac{\tilde{\sigma}_1 + \tilde{\sigma}_2}{2}
    \end{align*}
    for all $s\in[-1,1]$, respectively. Here, $\tilde{\rho}_1, \tilde{\rho}_2>0$ and $\tilde{\sigma}_1, \tilde{\sigma}_2>0$ are the specific densities of the two fluid components in the bulk and on the surface, respectively. Moreover, we use the notation
    \begin{alignat*}{2}
        &\rho_\ast \coloneqq \min\{\tilde\rho_1,\tilde\rho_2\}, \qquad 
        &&\rho^\ast \coloneqq \max\{\tilde\rho_1,\tilde\rho_2\},
        \\
        &\sigma_\ast \coloneqq \min\{\tilde\sigma_1,\tilde\sigma_2\}, \qquad 
        &&\sigma^\ast \coloneqq \max\{\tilde\sigma_1,\tilde\sigma_2\}.
    \end{alignat*}
    By this definition, we clearly have
    \begin{align*}
        0 < \rho_\ast \leq \rho(s) \leq \rho^\ast \quad\text{and}\quad 0 < \sigma_\ast \leq \sigma(s) \leq \sigma^\ast \qquad\text{for all~}s\in[-1,1].
    \end{align*}
    \item \label{Assumption:NSCH:Coefficients} The mobility functions $m_\Om,m_\Ga:[-1,1]\rightarrow\R$, the viscosity functions $\nu_\Om, \nu_\Ga:[-1,1]\rightarrow\R$ and the friction coefficient $\gamma:[-1,1]^2\rightarrow\R$ are continuous, bounded and uniformly positive. In particular, there exist constants $m_\ast, m^\ast, \nu_\ast, \nu^\ast, \gamma_\ast$ and $\gamma^\ast$ such that
    \begin{align*}
    	0 < m_\ast \leq m_\Om(s), m_\Ga(s) \leq m^\ast, \quad 0 < \nu_\ast \leq \nu_\Om(s), \nu_\Ga(s) \leq \nu^\ast \quad\text{for all~}s\in[-1,1],
    \end{align*}
    and
    \begin{align*}
    	0 < \gamma_\ast \leq \gamma(s,r) \leq \gamma^\ast \quad\text{for all~}(s,r)\in[-1,1]^2.
    \end{align*}
	\item \label{Assumption:NSCH:Potentials} We assume that the potentials $F,G:[-1,1]\rightarrow\R$ are of the form
	\begin{align*}
		F(s) = F_0(s) - \frac{c_F}{2}s^2, \qquad G(s) = G_0(s) - \frac{c_G}{2}s^2 \qquad\text{for~}s\in[-1,1]
	\end{align*}
	with constants $c_F,c_G > 0$, where $F_0, G_0\in C([-1,1])\cap C^2(-1,1)$ are such that
    \begin{align*}
        \lim_{s\searrow -1} F_0^\prime(s) = \lim_{s\searrow -1} G_0^\prime(s) = - \infty \quad\text{and}\quad \lim_{s\nearrow 1} F_0^\prime(s) = \lim_{s\nearrow 1} G_0^\prime(s) = + \infty.
    \end{align*}
    Moreover, we assume that
    \begin{align*}
        F_0^{\prime\prime}(s) \geq c_F, \qquad G_0^{\prime\prime}(s) \geq c_G \qquad\text{for all~}s\in(-1,1).
    \end{align*}
    Without loss of generality, we assume that $F_0(0) = G_0(0) = 0$ and $F_0^\prime(0) = G_0^\prime(0) = 0$. Lastly, we require the singular part of the boundary potential to dominate the singular part of the bulk potential in the sense that there exist constants $\kappa_1, \kappa_2 > 0$ such that
    \begin{align}\label{Assumption:NSCH:DominationProperty}
        \abs{F_0^\prime(\alpha s)} \leq \kappa_1\abs{G_0^\prime(s)} + \kappa_2 \qquad\text{for all~}s\in(-1,1).
    \end{align}
\end{enumerate}

\subsection{Existence of weak solutions}

Let us introduce a suitable notion of a weak solution to \eqref{System:NSCH}.

\begin{definition}\label{NSCH:Definition:WeakSolution}
Suppose that the assumptions \ref{Assumption:NSCH:1}-\ref{Assumption:NSCH:Potentials} hold. Let $K,L\in[0,\infty]$, and let $(\bv_0,\bw_0)\in\mathbfcal{L}^2_\Div$ and $(\phi_0,\psi_0)\in\mathcal{H}^1_{K,\alpha}$ be initial data such that 
    \begin{subequations}\label{NSCH:cond:init}
        \begin{align}\label{NSCH:cond:init:int}
            \norm{\phi_0}_{L^\infty(\Om)} \leq 1 \quad\text{and}\quad \norm{\psi_0}_{L^\infty(\Ga)} \leq 1.
        \end{align}
        In addition, we further assume that
        \begin{align}\label{NSCH:cond:init:mean:L}
            \beta\mean{\phi_0}{\psi_0}, \,\mean{\phi_0}{\psi_0}\in(-1,1)  \qquad\text{if~} L\in[0,\infty),
        \end{align}
        and
        \begin{align}\label{NSCH:cond:init:mean:infty}
            \meano{\phi_0},\, \meang{\psi_0}\in(-1,1) \qquad\text{if~} L = \infty.
        \end{align}
    \end{subequations}
    The sextuplet $(\bv,\bw,\phi,\psi,\mu,\theta)$ is called a weak solution to system \eqref{System:NSCH} on $[0,\infty)$ if the following properties hold:
    \begin{enumerate}[label=\textnormal{(\roman*)}, ref=\thetheorem(\roman*)]
        \item \label{NSCH:WeakSolution:REG} The functions $\bv, \bw, \phi, \psi, \mu$ and $\theta$ have the regularities
        \begin{subequations}
            \begin{align}
                (\bv,\bw) &\in BC_w([0,\infty);\mathbfcal{L}^2_\Div)\cap L^2(0,\infty;\mathbfcal{H}^1_{0,\Div}), \label{NSCH:REG:VW}\\
                (\phi,\psi) &\in BC_w([0,\infty);\mathcal{H}_{K,\alpha}^1)\cap L^2_\uloc([0,\infty);\mathcal{H}^2), \label{NSCH:REG:PP}\\
                (F^\prime(\phi),G^\prime(\psi))&\in L^2_\uloc([0,\infty);\mathcal{L}^2),\label{NSCH:REG:POT}\\
                (\mu,\theta)&\in L^2_\uloc([0,\infty);\mathcal{H}_{L,\beta}^1), \label{NSCH:REG:MT}
            \end{align}
            and it holds
            \begin{align*}
            	\abs{\phi} < 1 \quad\text{a.e.~in~}Q \quad\text{and}\quad \abs{\psi} < 1 \quad\text{a.e.~on~}\Sigma.
            \end{align*}
    \end{subequations}
        \item \label{NSCH:WeakSolution:IC} The initial conditions are satisfied in the following sense:
        \begin{align}\label{NSCH:IC}
            \big(\bv,\phi\big)\vert_{t=0} = \big(\bv_0,\phi_0\big) \quad\text{in~}\Om, \quad \big(\bw,\psi\big)\vert_{t=0} = \big(\bw_0,\psi_0\big) \quad\text{on~}\Ga.
        \end{align}
        \item The variational formulations
        \begin{subequations}\label{NSCH:WeakSolution:WF}
            \begin{align}
                &-\int_0^\infty\intO\rho(\phi)\bv\cdot\delt\wv\dxt - \int_0^\infty\intG\sigma(\psi)\bw\cdot\delt\ww\dGt \nonumber \\
                &\qquad - \int_0^\infty\intO\Div(\rho(\phi)\bv\otimes\bv)\cdot\wv\dxt + \int_0^\infty\intG\Divg(\sigma(\psi)\bw\otimes\bw)\cdot\ww\dGt \nonumber \\
                &\qquad + \int_0^\infty\intO\big(\bv\otimes\J\big):\Grad\wv\dxt + \int_0^\infty\intG\big(\bw\otimes\K\big):\Gradg\ww\dGt \nonumber \\
                &\qquad + \int_0^\infty\intO2\nu_\Om(\phi)\D\bv:\D\wv\dxt + \int_0^\infty\intG2\nu_\Ga(\psi)\Dg\bw:\Dg\ww\dGt \nonumber \\
                &\qquad + \int_0^\infty\intG \gamma(\phi,\psi)\bw\cdot\ww\dGt \label{NSCH:WF:VW} \\
                &\quad = \int_0^\infty\intO\mu\Grad\phi\cdot\wv\dxt + \int_0^\infty\intG\theta\Gradg\psi\cdot\ww\dGt \nonumber \\
                &\qquad + \frac{\chi(L)}{2}\Big(\beta\frac{\tilde\sigma_2 - \tilde\sigma_1}{2} + \frac{\tilde\rho_2 - \tilde\rho_1}{2}\Big)\int_0^\infty\intG(\beta\theta - \mu)\bw\cdot\ww\dGt, \nonumber
            \end{align}
            and
            \begin{align}\label{NSCH:WF:PP}
                &-\int_0^\infty\intO\phi\,\delt\zeta\dxt - \int_0^\infty\intG\psi\,\delt\zeta_\Ga\dGt \nonumber \\
                &\qquad - \int_0^\infty\intO \phi\bv\cdot\Grad\zeta\dxt - \int_0^\infty\intG \psi\bw\cdot\Gradg\zeta_\Ga\dGt \\
                &\quad= - \int_0^\infty\intO m_\Om(\phi)\Grad\mu\cdot\Grad\zeta\dxt - \int_0^\infty\intG  m_\Ga(\psi)\Gradg\theta\cdot\Gradg\zeta_\Ga\dGt \nonumber \\
                &\qquad - \chi(L)\int_0^\infty\intG(\beta\theta-\mu)(\beta\zeta_\Ga - \zeta)\dGt \nonumber
            \end{align}
        \end{subequations}
        hold for all $(\wv,\ww)\in C_c^\infty(0,\infty;\mathbfcal{H}^2_{0,\Div})$ and all $(\zeta,\zeta_\Ga)\in C_c^\infty(0,\infty;\mathcal{H}^1_{L,\beta})$, where
        \begin{subequations}\label{NSCH:WeakSolution:MuTheta}
        \begin{alignat}{2}
        	&\mu = -\Lap\phi + F^\prime(\phi) &&\qquad\text{a.e.~in~}Q, \\
        	&\theta = -\Lapg\psi + G^\prime(\psi) + \alpha\deln\phi &&\qquad\text{a.e.~on~}\Sigma, \\
        	&K\deln\phi = \alpha\psi - \phi, \qquad K\in[0,\infty],
        	&&\qquad\text{a.e.~on~}\Sigma,
        \end{alignat}
        \end{subequations}
        and
        \begin{alignat*}{2}
        	\rho(\phi) &= \frac{\tilde\rho_2 - \tilde\rho_1}{2}\phi + \frac{\tilde\rho_1 + \tilde\rho_2}{2} &&\qquad\text{a.e.~in~}Q, \\
        	\sigma(\psi) &= \frac{\tilde\sigma_2 - \tilde\sigma_1}{2}\psi + \frac{\tilde\sigma_ 1 + \tilde\sigma_2}{2} &&\qquad\text{a.e.~on~}\Sigma, \\
        	\J &= - \frac{\tilde\rho_2 - \tilde\rho_1}{2}m_\Om(\phi)\Grad\mu &&\qquad\text{a.e.~in~}Q, \\
        	\K &= - \frac{\tilde\sigma_2 - \tilde\sigma_1}{2}m_\Ga(\psi)\Gradg\theta &&\qquad\text{a.e.~on~}\Sigma.
        \end{alignat*}
        \item \label{NSCH:WeakSolution:MCL} The functions $\phi$ and $\psi$ satisfy the mass conservation law
        \begin{align}\label{NSCH:WeakSolution:MassConservationLaw}
            \begin{dcases}
                \beta\intO \phi(t)\dx + \intG \psi(t)\dG = \beta\intO \phi_0 \dx + \intG \psi_0\dG &\textnormal{if~} L\in[0,\infty), \\
                \intO\phi(t)\dx = \intO\phi_0\dx \quad\textnormal{and}\quad \intG\psi(t)\dG = \intG\psi_0\dG &\textnormal{if~} L = \infty
            \end{dcases}
        \end{align}
        for all $t\in[0,\infty)$.
        \item \label{NSCH:WeakSolution:EI} The energy inequality
        \begin{align}\label{NSCH:WeakSolution:EnergyInequality}
            &E_{\mathrm{tot}}(\bv(t),\bw(t),\phi(t),\psi(t)) \nonumber \\
            &\qquad + \int_s^t\intO 2\nu_\Om(\phi)\abs{\D\bv}^2\dxtau + \int_s^t\intG 2\nu_\Ga(\psi)\abs{\Dg\bw}^2\dGtau + \int_s^t\intG \gamma(\phi,\psi)\abs{\bw}^2 \dGtau \\
            &\qquad + \int_s^t\intO m_\Om(\phi)\abs{\Grad\mu}^2\dxtau + \int_s^t\intG m_\Ga(\psi)\abs{\Gradg\theta}^2\dGtau + \chi(L)\int_s^t\intG (\beta\theta - \mu)^2\dGtau \nonumber \\
            &\quad\leq E_\tot(\bv(s),\bw(s),\phi(s),\psi(s)) \nonumber
        \end{align}
        holds for all $t\in[s,\infty)$ and almost all $s\in[0,\infty)$ including $s = 0$.
    \end{enumerate}
\end{definition}

We recall that the total energy associated with the system \eqref{System:NSCH} is given by
\begin{align*}
	E_\tot(\bv,\bw,\phi,\psi) &\coloneqq \intO \frac12\rho(\phi)\abs{\bv}^2\dx + \intG \frac12\sigma(\psi)\abs{\bw}^2\dG + \intO \frac12\abs{\Grad\phi}^2 + F(\phi)\dx \\
	&\quad + \intG \frac12\abs{\Gradg\psi}^2 + G(\psi)\dG + \chi(K)\intG \frac12(\alpha\psi - \phi)^2\dG.
\end{align*}

\begin{remark}
In view of the regularities of a weak solution $(\bv,\bw,\phi,\psi,\mu,\theta)$, it is straightforward to show with the weak formulation \eqref{NSCH:WF:PP} that 
\begin{align*}
	(\delt\phi,\delt\psi)\in L^2(0,\infty;(\mathcal{H}^1_{L,\beta})^\prime).
\end{align*}
Moreover, it holds that
\begin{align*}
	&\big\langle(\delt\phi,\delt\psi),(\zeta,\zeta_\Ga)\big\rangle_{\mathcal{H}^1_{L,\beta}}  - \intO\phi\bv\cdot\Grad\zeta\dx - \intG \psi\bw\cdot\Gradg\zeta_\Ga\dG \\
	&\quad = - \intO m_\Om(\phi)\Grad\mu\cdot\Grad\zeta\dx - \intG m_\Ga(\psi)\Gradg\theta\cdot\Gradg\zeta_\Ga\dG - \chi(L)\intG (\beta\theta - \mu)(\beta\zeta_\Ga - \zeta)\dG
\end{align*}
a.e.~on $(0,\infty)$ for all $(\zeta,\zeta_\Ga)\in\mathcal{H}^1_{L,\beta}$. In particular, $(\phi,\psi,\mu,\theta)$ is a weak solution to the convective bulk-surface Cahn--Hilliard equation with initial datum $(\phi_0,\psi_0)$ and velocity fields $(\bv,\bw)$ in the sense of \cite[Definition~3.1]{Knopf2025}.
\end{remark}

We are now in a position to state our main theorem of this manuscript, which establishes the existence of a global-in-time weak solution to system \eqref{System:NSCH}.

\begin{theorem}\label{NSCH:Theorem:ExistenceWeakSolutions}
Suppose that the assumptions \ref{Assumption:NSCH:1}-\ref{Assumption:NSCH:Potentials} hold.
Let $K,L\in[0,\infty]$, $(\bv_0,\bw_0)\in\mathbfcal{L}^2_\Div$, and let $(\phi_0,\psi_0)\in\mathcal{H}^1_{K,\alpha}$ satisfy \eqref{NSCH:cond:init}. If $L = 0$, we additionally assume that
    \begin{align}\label{Densities:Comp:Weak}
    	\beta(\tilde\sigma_2 - \tilde\sigma_1) = -(\tilde\rho_2 - \tilde\rho_1).
    \end{align}
    Then, there exists at least one global-in-time weak solution $(\bv,\bw,\phi,\psi,\mu,\theta)$ to system \eqref{System:NSCH} in the sense of Definition~\ref{NSCH:Definition:WeakSolution}.
    In addition, the following separate energy inequalities hold: for all $t\in[s,\infty)$ and almost every $s\in[0,\infty)$, including $s = 0$, it holds 
    \begin{align}\label{EnergyInequality:Kinetic}
        & E_\mathrm{kin}(\bv(t),\bw(t),\phi(t),\psi(t)) \nonumber \\%\intO\frac12\rho(\phi(t))\abs{\bv(t)}^2\dx + \intG\frac12\sigma(\psi(t))\abs{\bw(t))}^2\dG \nonumber \\
        &\qquad + \int_s^t\intO 2\nu_\Om(\phi)\abs{\D\bv}^2\dxtau + \int_s^t\intG 2\nu_\Ga(\psi)\abs{\Dg\bw}^2\dGtau + \int_s^t\intG \gamma(\phi,\psi)\abs{\bw}^2\dGtau \nonumber \\
        &\quad\leq E_\mathrm{kin}(\bv(s),\bw(s),\phi(s),\psi(s))%\intO\frac12\rho(\phi(s))\abs{\bv(s)}^2\dx + \intG\frac12\sigma(\psi(s))\abs{\bw(s))}^2\dG - \int_s^t\intO \phi\bv\cdot\Grad\mu\dxtau \nonumber \\
        - \int_s^t\intO \phi\bv\cdot\Grad\mu\dxtau- \int_s^t\intG \psi\bw\cdot\Gradg\theta\dGtau
    \end{align}
    as well as 
    \begin{align}\label{EnergyInequality:Free}
        &E_\free(\phi(t),\psi(t)) \nonumber \\
        &\qquad + \int_s^t\intO m_\Om(\phi)\abs{\Grad\mu}^2\dxtau + \int_s^t\intG m_\Ga(\psi)\abs{\Gradg\theta}^2\dGtau + \chi(L)\int_s^t\intG(\beta\theta-\mu)^2\dGtau \nonumber \\
        &\quad\leq E_\free(\phi(s),\psi(s)) + \int_s^t\intO \phi\bv\cdot\Grad\mu\dxtau + \int_s^t\intG \psi\bw\cdot\Gradg\theta\dGtau. 
    \end{align}
\end{theorem}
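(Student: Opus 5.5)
The proof will follow the two-step strategy announced in the introduction. First we treat $K\in(0,\infty]$ and $L\in[0,\infty]$ by an implicit time discretization in the spirit of \cite{Abels2013}. Then we obtain the case $K=0$ by the asymptotic limit $K\to0$, using the Mosco convergence of the convex energies $\widetilde E_\free^K$ established in Section~\ref{Section:MoscoConvergence}.

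\textbf{Step 1: the discrete scheme for $K\in(0,\infty]$.} We first regularize the initial data. By Lemma~\ref{NSCH:Lemma:ApproxIC} we pick $(\phi_0^N,\psi_0^N)\in\mathcal{H}^2$ converging to $(\phi_0,\psi_0)$ in $\mathcal{H}^1$, preserving \eqref{NSCH:cond:init}. For a time step $h=1/N$ we then determine $(\bv_{k+1},\bw_{k+1},\phi_{k+1},\psi_{k+1},\mu_{k+1},\theta_{k+1})$ from the data at step $k$ via a scheme modelled on \cite{Abels2013}:
\begin{itemize}
\item the momentum equation is discretized as $\frac{\rho(\phi_{k+1})\bv_{k+1}-\rho(\phi_k)\bv_k}{h}$, with convection $\Div(\rho(\phi_k)\bv_{k+1}\otimes\bv_{k+1})$, flux $\J_{k+1}$, and an extra term $\frac12\frac{\rho(\phi_{k+1})-\rho(\phi_k)}{h}\bv_{k+1}$ correcting for the non-solenoidal mass flux;
\item the surface momentum equation is treated analogously, including the $\chi(L)$ exchange term;
\item the Cahn--Hilliard part uses convection by $\bv_k$ and $\bw_k$.
\end{itemize}
The chemical potentials are given by
\[
(\mu_{k+1}-c_F\frac{\phi_{k+1}+\phi_k}{2},\ \theta_{k+1}-c_G\frac{\psi_{k+1}+\psi_k}{2})\in\partial\widetilde E_\free^K(\phi_{k+1},\psi_{k+1}),
\]
and the characterization of this subdifferential from \cite{Giorgini2026} provides $\mathcal{H}^2$-regularity and the pointwise equations \eqref{NSCH:WeakSolution:MuTheta}. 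Testing with $(\bv_{k+1},\bw_{k+1})$ and $(\mu_{k+1},\theta_{k+1})$ gives a discrete energy inequality. The convex–concave splitting makes the quadratic part dissipative, and the choice of convective terms makes the capillary terms $\mu\Grad\phi$ and $\theta\Gradg\psi$ cancel against the transport terms. When $L=0$, the compatibility \eqref{Densities:Comp:Weak} is what makes the boundary exchange contribution in \eqref{NSCH:WF:VW} vanish consistently. Existence for each step follows from the Leray--Schauder principle. The a priori bound comes from the discrete energy estimate. Invertibility of the linear part comes from the bulk-surface Korn inequality and maximal monotonicity of $\partial\widetilde E_\free^K$.

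\textbf{Step 2: uniform estimates and the limit $N\to\infty$.} From the discrete energy inequality we get, uniformly in $N$, the bounds on $(\bv,\bw)$ in $L^\infty\mathcal L^2\cap L^2\mathcal H^1$ and on $(\phi,\psi)$ in $L^\infty\mathcal H^1$, and on $\Grad\mu,\Gradg\theta$ (plus the $\chi(L)$ term) in $L^2$. To control the means of $\mu,\theta$ we test the subdifferential inclusion with $(\phi-\text{mean},\psi-\text{mean})$. This uses \eqref{NSCH:cond:init:mean:L}, resp.\ \eqref{NSCH:cond:init:mean:infty}, so that the means stay strictly inside $(-1,1)$, together with the domination property \eqref{Assumption:NSCH:DominationProperty}. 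This yields $(\mu,\theta)\in L^2_\uloc\mathcal H^1$, and then $\mathcal H^2$- and $(F'(\phi),G'(\psi))\in L^2_\uloc\mathcal L^2$ bounds from the subdifferential estimate.

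Compactness works as follows:
\begin{itemize}
\item the discrete time derivatives of $(\phi,\psi)$ are bounded in $L^2(\mathcal H^1_{L,\beta})'$, which gives strong convergence of the piecewise interpolants in $L^2\mathcal H^1$ via Aubin--Lions;
\item for $(\rho\bv,\sigma\bw)$ we use a time-shift (Simon-type) estimate derived by testing the momentum scheme with divergence-free test pairs, exactly as in \cite{Abels2013}, which gives strong $L^2\mathcal L^2$ convergence of $(\bv,\bw)$.
\end{itemize}
We then pass to the limit in all terms. The chemical potential relation survives by weak–strong closedness of the maximal monotone graph, and weak continuity in time follows from Lemma~\ref{Prelim:Lemma:BC_w}. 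The energy inequalities \eqref{NSCH:WeakSolution:EnergyInequality}, \eqref{EnergyInequality:Kinetic} and \eqref{EnergyInequality:Free} come from lower semicontinuity, for almost all $s$ and $s=0$. The separate versions are obtained from the discrete kinetic and free energy identities, since the convective coupling $\int\phi\bv\cdot\Grad\mu$ converges by strong-times-weak convergence.

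\textbf{Step 3: $K\to0$.} For $(\phi_0,\psi_0)\in\mathcal H^1_{0,\alpha}$ we take solutions for $K_n\to0$ with the same data. This is admissible since $\phi_0=\alpha\psi_0$ makes the penalty term vanish, so $E_\free^{K_n}(\phi_0,\psi_0)$ is bounded. The estimates of Step 2 are uniform in $K$ because $\chi(K)\frac12\|\alpha\psi-\phi\|^2$ is part of the energy. This forces $\phi=\alpha\psi$ on $\Sigma$ in the limit, and we need the mean/potential bounds to not degenerate. The chemical potentials are identified through Mosco convergence $\widetilde E_\free^{K_n}\to\widetilde E_\free^0$, which implies graph convergence of the subdifferentials (Attouch). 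Since we only have weak convergence of $(\mu,\theta)$ but strong convergence of $(\phi,\psi)$, this suffices.

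\textbf{Main obstacle.} I expect the main difficulty to be Step~1/2's handling of the boundary terms: designing the discrete surface momentum equation so that the exchange terms $\frac12(\J\cdot\n)\bw+\frac12(\J_\Ga\cdot\n)\bw$ and the $\chi(L)$ coupling cancel exactly in the energy identity. Closely tied to this are the uniform $L^2$ bounds on the means of $\mu,\theta$ in the coupled case $L<\infty$, where bulk and surface singularities must be controlled jointly through the domination property.
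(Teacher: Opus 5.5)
Your route is the paper's. You regularize the data with Lemma~\ref{NSCH:Lemma:ApproxIC}, solve each implicit step for $K\in(0,\infty]$ by Leray--Schauder, and pass to $N\to\infty$ using Aubin--Lions, the projected-momentum argument of \cite{Abels2013}, and closedness of the maximal monotone graph. You then reach $K=0$ through Mosco and graph convergence.

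\textbf{Main gap: the discrete energy estimate fails as written.} You transport $\phi,\psi$ with the old velocities $\bv_k,\bw_k$.
\begin{itemize}
\item Testing the momentum equation with $\bv_{k+1}$ produces $\intO\mu_{k+1}\Grad\phi_k\cdot\bv_{k+1}\dx$. Testing the Cahn--Hilliard equation with $\mu_{k+1}$ produces $\intO\mu_{k+1}\,\bv_k\cdot\Grad\phi_k\dx$. So the cancellation you invoke does not occur.
\item More seriously, the inertial terms only telescope because of the discrete mass balances $\frac{\rho_{k+1}-\rho_k}{h}+\bv_{k+1}\cdot\Grad\rho_k+\Div\J_{k+1}=0$ and $\frac{\sigma_{k+1}-\sigma_k}{h}+\bw_{k+1}\cdot\Gradg\sigma_k+\Divg\K_{k+1}=\J_{\Ga,k+1}\cdot\n$. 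These turn the tested terms $\frac{\rho_{k+1}\bv_{k+1}-\rho_k\bv_k}{h}$, $\Div(\rho_k\bv_{k+1}\otimes\bv_{k+1})$, $\Div(\bv_{k+1}\otimes\J_{k+1})$ and their surface counterparts into kinetic-energy differences plus $\frac12\intG(\J_{k+1}\cdot\n)\abs{\bw_{k+1}}^2\dG$ and $\frac12\intG(\J_{\Ga,k+1}\cdot\n)\abs{\bw_{k+1}}^2\dG$. Those last two cancel the exchange terms exactly.
\item With $\bv_k$ in the transport you are left instead with the sign-indefinite cubic term $\frac12\intO\big((\bv_{k+1}-\bv_k)\cdot\Grad\rho_k\big)\abs{\bv_{k+1}}^2\dx$.
\end{itemize}
The paper transports with $\bv_{k+1}\cdot\Grad\phi_k$ and $\bw_{k+1}\cdot\Gradg\psi_k$, and uses the capillary forces $\mu_{k+1}\Grad\phi_k$ and $\theta_{k+1}\Gradg\psi_k$. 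This yields both cancellations and settles what you call the main obstacle. It is also why the Cahn--Hilliard and momentum parts must be solved simultaneously in the Leray--Schauder argument.

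\textbf{Sign correction.} Since $F=F_0-\frac{c_F}{2}s^2$, the inclusion should read $\big(\mu_{k+1}+\frac{c_F}{2}(\phi_{k+1}+\phi_k),\,\theta_{k+1}+\frac{c_G}{2}(\psi_{k+1}+\psi_k)\big)\in\partial\widetilde E_\free^K(\phi_{k+1},\psi_{k+1})$. This midpoint treatment telescopes exactly rather than dissipating. The dissipation comes from the convexity of $F_0,G_0$ and from the gradient terms.

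\textbf{Open point in Step~3: the mean bounds as $K\to0$.} You say the bounds on the means of $(\mu,\theta)$ must not degenerate, but you give no argument. Your Step~2 test pair $(\phi-\beta m,\psi-m)$, with $m=\mean{\phi}{\psi}$, does not provide one. The penalty then contributes $\frac{(\beta-\alpha)m}{K}\intG(\alpha\psi-\phi)\dG$, which the energy only controls by $CK^{-1/2}$. The paper merely asserts this bound ``as in the proof of Theorem~\ref{NSCH:Theorem:ExistenceWeakSolutions:K>0}''.

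The bound does hold if you only subtract, or test with, pairs $(\eta,c)$ satisfying $\eta=\alpha c$ on $\Ga$. For such pairs the penalty term equals $\frac1K\norm{\alpha\psi-\phi}_{L^2(\Ga)}^2\geq0$ or vanishes.
\begin{itemize}
\item For the Miranville--Zelik step, test with $(\phi,\psi)-(\eta,c)$. Choose $\eta$ with trace $\alpha c$, values in a compact subset of $(-1,1)$, and the same (generalized) mean as $(\phi,\psi)$. Condition \eqref{NSCH:cond:init} together with $\abs{\alpha}\leq1$ allows such a choice.
\item Then recover the means by testing with $(\eta_0,0)$, where $\eta_0\in H^1_0(\Om)$ is a bounded bump with nonzero integral, and with the constant pair $(\alpha,1)$.
\end{itemize}
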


We remark the following regarding the compatibility assumption \eqref{Densities:Comp:Weak}.

\begin{remark} \label{REM:COMP}
	The compatibility condition \eqref{Densities:Comp:Weak} in the case $L = 0$ formally ensures that
        \begin{align*}%\label{Term:Comp:Weak}
        \frac12(\J\cdot\n)\bw + \frac12(\J_\Ga\cdot\n)\bw
        = \Big(\beta\frac{\tilde\sigma_2 - \tilde\sigma_1}{2} + \frac{\tilde\rho_2 - \tilde\rho_1}{2}\Big)m_\Om(\phi)\deln\mu\,\bw
        = 0 \qquad\text{on~}\Sigma.
        \end{align*}
    Consequently, the corresponding term in the weak formulation \eqref{NSCH:WF:VW} vanishes, which is crucial for treating the case $L = 0$. Condition \eqref{Densities:Comp:Weak} is related to the compatibility condition $\tilde\rho_1 = \tilde\rho_2$ imposed in \cite{Gal2024} for the analysis of the system introduced in \cite{Giorgini2023} in the case $L = 0$. From the viewpoint of mathematical analysis, the present model \eqref{System:NSCH} therefore has an advantage that it allows for fluids with unmatched densities even when $L = 0$.

    Although condition \eqref{Densities:Comp:Weak} is needed in our proof, the following argument indicates that is is also difficult to avoid when employing other standard existence methods for $L = 0$. One natural approach would be to pass to the asymptotic limit $L\rightarrow 0$, as previously done in \cite{Knopf2025a}. More precisely, for each $L\in(0,\infty)$, one could consider a corresponding weak solution
    \begin{align*}
        (\bv_L,\bw_L,\phi_L,\psi_L,\mu_L,\theta_L)
    \end{align*}
    and derive estimates uniform in $L$ from the energy inequality. The term in the weak formulation corresponding to the expression above would then be
    \begin{align*}%\label{Term:Comp:WeakF}
        \frac1L\Big(\beta\frac{\tilde\sigma_2-\tilde\sigma_1}{2} + \frac{\tilde\rho_2-\tilde\rho_1}{2}\Big)\int_\Sigma(\beta\theta_L-\mu_L)\bw_L\cdot\ww\dGt.
    \end{align*}
    However, the energy inequality only yields
    \begin{align*}
        \norm{\beta\theta_L-\mu_L}_{L^2(0,\infty;L^2(\Ga))} \leq C\sqrt{L}.
    \end{align*}
    Because of the prefactor $\frac1L$, this estimate is insufficient to pass to the limit $L\rightarrow 0$ in the preceding term. Thus, without the compatibility condition \eqref{Densities:Comp:Weak}, the standard asymptotic approach does not appear to provide the estimates required to treat $L = 0$.
\end{remark}

\section{Mosco convergence of the convex part of the free energy}
\label{Section:MoscoConvergence}

In this section, we establish the Mosco convergence of the convex part of the free bulk-surface energy as $K\rightarrow 0$ and $K\rightarrow\infty$. As a consequence, we obtain the corresponding graph convergence of its subdifferential, which will be used in the analysis of the asymptotic limits later on. To this end, let $K\in[0,\infty]$ be arbitrary, and recall that the potentials can be decomposed as $F(s) = F_0(s) - \frac{c_F}{2}s^2$ and $G(s) = G_0(s) - \frac{c_G}{2}s^2$ for $s\in[-1,1]$, see \ref{Assumption:NSCH:Potentials}. We then consider the functional $\widetilde{E}_\free^K: \mathcal{L}^2 \to (-\infty,\infty]$ defined as
\begin{align*}
    \widetilde{E}_\free^K(u,v) = 
    \begin{cases} 
    \displaystyle\intO \frac12\abs{\Grad u}^2 + F_0(u)\dx + \intG \frac12\abs{\Gradg v}^2 + G_0(v)\dG 
    &\smash{\raisebox{-1.6ex}
    {\text{for~}$(u,v)\in\mathrm{Dom}\,\widetilde E_\free^K$}} 
    \\
    \quad + \displaystyle\chi(K) \intG \frac12 (\alpha v - u)^2 \dG, 
    \\
    +\infty &\text{else},
    \end{cases}
\end{align*}
where
\begin{equation*}
    \mathrm{Dom}\,\widetilde{E}_\free^K = \big\{(u,v)\in\mathcal{H}^1_{K,\alpha}: \abs{u}\leq 1 
    \text{~a.e.~on~}\Om, \ \abs{v}\leq 1 \text{~a.e.~on~}\Ga\big\}.
\end{equation*}
In view of the assumptions on the singular parts $F_0$ and $G_0$, one readily checks that the functional $\widetilde E_\free^K$ is proper, lower semicontinuous, and convex, see also \cite[Lemma~4.1]{Giorgini2026}.

\subsection{Mosco convergence}

We start by recalling the definition of Mosco convergence, see \cite[Section~3.3, Definition~3.17]{Attouch1984}

\begin{definition}
    Let $H$ be a Hilbert space, and let $\Phi,\Phi_n:H\rightarrow [0,+\infty]$, $n\in\N$, be proper, lower semicontinuous and convex functionals on $H$. We say that $(\Phi_n)_{n\in\N}$ converges to $\Phi$ in the sense of Mosco if the following two conditions hold:
    \begin{enumerate}[label=\textnormal{\bfseries(M\arabic*)}]
        \item \label{Mosco1} For every sequence $(x_n)_{n\in\N}\subset H$ converging weakly to $x\in H$ it holds that
        \begin{align*}
            \Phi(x) \leq \liminf_{n\rightarrow\infty} \Phi_n(x_n).
        \end{align*}
        \item \label{Mosco2} For every $x\in H$ there exists a sequence $(x_n)_{n\in\N}\subset H$ converging strongly to $x$ such that
        \begin{align*}
            \Phi(x) \geq \limsup_{n\rightarrow\infty} \Phi_n(x_n).
        \end{align*}
    \end{enumerate}
\end{definition}

We are now able to prove the Mosco convergence of the functional $\widetilde E_\free^K$ as $K\rightarrow 0$ and $K\rightarrow\infty$, respectively. In this context, we also refer to \cite{Fukao2026}, where a similar energy functional was considered without additional singular nonlinear terms.
\begin{proposition}\label{Proposition:MoscoConvergence}
    The functional $\widetilde E_\free^K$ converges in the sense of Mosco to $\widetilde E_\free^0$ and $\widetilde E_\free^\infty$, respectively, as $K\rightarrow 0$ and $K\rightarrow\infty$, respectively.
\end{proposition}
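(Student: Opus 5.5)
We verify \ref{Mosco1} and \ref{Mosco2} separately for the two limits, working in $H=\mathcal{L}^2$. Since $F_0,G_0$ are continuous on $[-1,1]$ they are bounded below, so after adding a constant all functionals are nonnegative; this does not affect Mosco convergence. Throughout, let $K_n\to 0$ or $K_n\to\infty$, and write $\widetilde E_n=\widetilde E^{K_n}_\free$.

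\emph{Step 1: the liminf inequality \ref{Mosco1}.} Let $(u_n,v_n)\rightharpoonup(u,v)$ in $\mathcal{L}^2$. We may assume $\liminf\widetilde E_n(u_n,v_n)<\infty$ and pass to a subsequence realizing it, with $\widetilde E_n(u_n,v_n)\le C$. Then $(u_n,v_n)\in\mathrm{Dom}\,\widetilde E^{K_n}_\free$, and since $F_0,G_0$ are bounded below, $\norm{\Grad u_n}_{L^2}$ and $\norm{\Gradg v_n}_{L^2}$ are bounded. Together with the $\mathcal{L}^2$ bound this gives boundedness in $\mathcal{H}^1$. Hence, along a further subsequence, $(u_n,v_n)\rightharpoonup(u,v)$ in $\mathcal{H}^1$, and strongly in $\mathcal{L}^2$ and in $L^2(\Ga)$ for the traces (compact trace embedding $H^1(\Om)\emb L^2(\Ga)$). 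The pointwise bounds $\abs{u},\abs{v}\le1$ pass to the limit. The quadratic gradient terms are weakly lower semicontinuous, and the terms $\int F_0(u_n)$, $\int G_0(v_n)$ converge by dominated convergence along a.e.-convergent subsequences. The coupling term $\chi(K_n)\frac12\intG(\alpha v_n-u_n)^2$ is nonnegative, so it can be dropped for $K\to\infty$, where $\chi(\infty)=0$. For $K\to0$ we additionally use $\intG(\alpha v_n-u_n)^2\le 2K_nC\to0$, and strong trace convergence then gives $u=\alpha v$ on $\Ga$, i.e.\ $(u,v)\in\mathcal{H}^1_{0,\alpha}$. It follows that $\widetilde E^0_\free(u,v)$ (resp.\ $\widetilde E^\infty_\free(u,v)$) is at most the liminf. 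In the case $K\to0$ the limit is $\widetilde E^0_\free$, which has no coupling term, so dropping the nonnegative term is again harmless.

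\emph{Step 2: the recovery sequence \ref{Mosco2}.} If the limit energy is $+\infty$, take $x_n=x$ and there is nothing to show. For $K\to0$, if $(u,v)\in\mathrm{Dom}\,\widetilde E^0_\free$, take the constant sequence $(u_n,v_n)=(u,v)$. Then $\alpha v-u=0$ on $\Ga$, so $\widetilde E_n(u,v)=\widetilde E^0_\free(u,v)$.

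For $K\to\infty$, the constant sequence also lies in $\mathrm{Dom}\,\widetilde E^{K_n}_\free=\{(u,v)\in\mathcal{H}^1 : \abs{u},\abs{v}\le1\}$, and
\begin{align*}
\widetilde E_n(u,v)=\widetilde E^\infty_\free(u,v)+\tfrac1{2K_n}\intG(\alpha v-u)^2\dG\;\to\;\widetilde E^\infty_\free(u,v).
\end{align*}
This holds because $(\alpha v-u)\vert_\Ga\in L^2(\Ga)$ is a fixed function. So in both regimes the recovery sequence is trivial, with equality or exact convergence.

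\emph{Main obstacle.} The key point is the compactness in Step 1, namely passing from weak $\mathcal{L}^2$ convergence to weak $\mathcal{H}^1$ convergence via the energy bound, and from there to identifying the trace constraint $u=\alpha v$ in the limit $K\to0$. The energy bound yields only gradient bounds; the $\mathcal{L}^2$ bound comes from the weak convergence itself, since weakly convergent sequences are bounded. The compact trace embedding then converts the vanishing penalty into the constraint. Convergence of the singular terms needs only continuity and boundedness of $F_0,G_0$ on $[-1,1]$, and lower semicontinuity would suffice in any case (Fatou).
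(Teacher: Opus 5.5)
Your proposal is correct and follows essentially the same route as the paper. Both arguments pass to a subsequence realizing the liminf and obtain $\mathcal{H}^1$-compactness from the energy bound. For $K\to0$, both use the penalty bound to force $u=\alpha v$ on $\Ga$. Both then pass to the limit in the $F_0,G_0$ terms by continuity of the Nemytskii operators and use constant recovery sequences in both regimes.

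The only differences are minor. You invoke the compact trace embedding, whereas weak continuity of the trace already suffices. Your shift by a constant is unnecessary, since $F_0,G_0\geq 0$ already follows from convexity together with $F_0(0)=G_0(0)=0$ and $F_0'(0)=G_0'(0)=0$.
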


\begin{proof}
    We first consider the case $K\rightarrow 0$. To this end, let $\{K_n\}_{n\in\N}\subset(0,\infty)$ be a sequence with $K_n\rightarrow 0$ as $n\rightarrow\infty$, and let $\{(u_n,v_n)\}_{n\in\N}\subset\mathcal{L}^2$ be a sequence converging weakly in $\mathcal{L}^2$ to $(u,v)\in\mathcal{L}^2$ as $n\rightarrow\infty$. Without loss of generality, we can assume that 
    \begin{align*}
        \liminf_{n\rightarrow\infty} \widetilde E_\free^{K_n}(u_n,v_n) < + \infty,
    \end{align*}
    as otherwise \ref{Mosco1} is trivially fulfilled. Then, going over to a subsequence, we have 
    \begin{align*}
        \lim_{k\rightarrow\infty} \widetilde E_\free^{K_{n_k}}(u_{n_k},v_{n_k}) = \liminf_{n\rightarrow\infty} \widetilde E_\free^{K_n}(u_n,v_n),
    \end{align*}
    as well as
    \begin{subequations}
        \begin{alignat}{2}
            (u_{n_k},v_{n_k}) &\rightarrow (u,v) &&\qquad\text{weakly in~}\mathcal{H}^1, \label{Prelim:Mosco:WeakConv} \\
            (u_{n_k},v_{n_k}) &\rightarrow (u,v) &&\qquad \text{strongly in~}\mathcal{L}^2 \label{Prelim:Mosco:StrongConv}
        \end{alignat}
    \end{subequations}
    as $k\rightarrow\infty$. Moreover, as the energy is bounded along this subsequence, it in particular holds that
    \begin{align}
        \sup_{k\in\N} \frac{1}{K_{n_k}}\intG \frac12(\alpha v_{n_k} - u_{n_k})^2\dG < +\infty.
    \end{align}
    Consequently, we readily infer that
    \begin{align*}
        \alpha v_{n_k} - u_{n_k} \rightarrow 0 \qquad\text{strongly in~}L^2(\Ga)
    \end{align*}
    as $k\rightarrow\infty$. Since we already know that \eqref{Prelim:Mosco:WeakConv} holds, the continuity of the trace operator with the above convergence entails that $u = \alpha v$ a.e. on $\Ga$. Additionally, by possibly going over to another subsequence, we readily infer from $\abs{u_{n_k}}\leq 1$ a.e.~in $\Om$ and $\abs{v_{n_k}}\leq 1$ a.e.~on $\Ga$ that $\abs{u}\leq 1$ a.e.~in $\Om$ and $\abs{v}\leq 1$ a.e.~on $\Ga$, respectively. Then, from the strong convergence \eqref{Prelim:Mosco:StrongConv} and the uniform continuity of $F_0$ and $G_0$, the continuity of the Nemytskii operator directly gives that
    \begin{subequations}
        \begin{alignat}{2}
            F_0(u_{n_k})&\rightarrow F_0(u) &&\qquad\text{strongly in~}L^1(\Omega), \label{Prelim:Mosco:Conv:Pot:F:K0}\\
            G_0(v_{n_k})&\rightarrow G_0(v) &&\qquad\text{strongly in~}L^1(\Ga)\label{Prelim:Mosco:Conv:Pot:G:K0}
        \end{alignat}
    \end{subequations}
    as $k\rightarrow\infty$. Thus, we deduce with \eqref{Prelim:Mosco:WeakConv} and \eqref{Prelim:Mosco:Conv:Pot:F:K0}-\eqref{Prelim:Mosco:Conv:Pot:G:K0}, and the weak lower semicontinuity of norms that
    \begin{align*}
        \widetilde E_\free^0(u,v) \leq \liminf_{k\rightarrow\infty} \widetilde E_\free^{K_{n_k}}(u_{n_k},v_{n_k}) = \lim_{k\rightarrow\infty} \widetilde E_\free^{K_{n_k}}(u_{n_k},v_{n_k}) = \liminf_{n\rightarrow\infty} \widetilde E_\free^{K_n}(u_n,v_n).
    \end{align*}
    
    For \ref{Mosco2}, let $(u,v)\in\mathcal{L}^2$. Without loss of generality, we can assume that $(u,v)\in\mathrm{Dom}\,\widetilde E_\free^0$, as otherwise $\widetilde E_\free^0(u,v) = +\infty$. We can thus choose the constant recovery sequence $(u_n,v_n)\coloneqq (u,v)\in \mathrm{Dom}\,\widetilde E_\free^0\subset \mathrm{Dom}\,\widetilde E_\free^{K_n}$ for all $n\in\N$, and readily obtain
    \begin{align*}
        \widetilde E_\free^0(u,v) = \limsup_{n\rightarrow\infty} \widetilde E_\free^{K_n}(u,v) = \limsup_{n\rightarrow\infty} \widetilde E_\free^{K_n}(u_n,v_n),
    \end{align*}
    which proves the Mosco convergence of $\widetilde E_\free^K$ as $K\rightarrow 0$ and finishes the proof in this case.

    Now, consider $K_n\rightarrow\infty$ as $n\rightarrow\infty$. Let $\{(u_n,v_n)\}_{n\in\N}\subset\mathcal{L}^2$ be a sequence converging weakly in $\mathcal{L}^2$ to $(u,v)\in\mathcal{L}^2$ as $n\rightarrow\infty$. Without loss of generality we can assume that 
    \begin{align*}
        \liminf_{n\rightarrow\infty} \widetilde E_\free^{K_n}(u_n,v_n) < + \infty,
    \end{align*}
    as otherwise \ref{Mosco1} is trivially fulfilled. Then, as for $K\rightarrow 0$, we find a subsequence such that
    \begin{align*}
        \lim_{k\rightarrow\infty} \widetilde E_\free^{K_{n_k}}(u_{n_k},v_{n_k}) = \liminf_{n\rightarrow\infty} \widetilde E_\free^{K_n}(u_n,v_n),
    \end{align*}
    and
    \begin{subequations}
        \begin{alignat}{2}
            (u_{n_k},v_{n_k}) &\rightarrow (u,v) &&\qquad\text{weakly in~}\mathcal{H}^1, \label{Prelim:Mosco:WeakConv:infty} \\
            (u_{n_k},v_{n_k}) &\rightarrow (u,v) &&\qquad \text{strongly in~}\mathcal{L}^2 \label{Prelim:Mosco:StrongConv:infty}
        \end{alignat}
    \end{subequations}
    as $k\rightarrow\infty$. Then, the same Nemytskii argument and weak lower semicontinuity give
    \begin{align*}
        \widetilde E_\free^\infty(u,v) \leq \liminf_{k\rightarrow\infty} \widetilde E_\free^\infty(u_{n_k},v_{n_k}) \leq \liminf_{k\rightarrow\infty} \widetilde E_\free^{K_{n_k}}(u_{n_k},v_{n_k}) = \liminf_{n\rightarrow\infty} \widetilde E_\free^{K_n}(u_n,v_n).
    \end{align*}
    For \ref{Mosco2}, let $(u,v)\in\mathcal{L}^2$. Without loss of generality we can again assume that $(u,v)\in\mathrm{Dom}\,\widetilde E_\free^\infty$, as otherwise $\widetilde E_\free^\infty(u,v) = +\infty$. We then choose again the constant sequence $(u_n,v_n)\coloneqq (u,v) \in\mathrm{Dom}\,\widetilde E_\free^\infty = \mathrm{Dom}\,\widetilde E_\free^{K_n}$ for all $n\in\N$, and observe that
    \begin{align*}
        \frac{1}{K_n}\intG \frac12(\alpha v_n - u_n)^2\dG = \frac{1}{K_n}\intG \frac12(\alpha v - u)^2\dG \rightarrow 0
    \end{align*}
    as $n\rightarrow\infty$, which entails that
    \begin{align*}
        \widetilde E_\free^\infty(u,v) = \limsup_{n\rightarrow\infty} \widetilde E_\free^{K_n}(u,v) = \limsup_{n\rightarrow\infty} \widetilde E_\free^{K_n}(u_n,v_n).
    \end{align*}
    This shows the Mosco convergence of $\widetilde E_\free^K$ as $K\rightarrow\infty$ and finishes the proof.
\end{proof}

\subsection{Subdifferentials and graph convergence}

We quickly recall the definition of the (convex) subdifferential $\del \widetilde{E}_\free^K$ of $\widetilde{E}_\free^K$. It can be formulated as an operator
$\del \widetilde{E}_\free^K:\mathcal{L}^2\rightarrow \mathcal{P}(\mathcal{L}^2)$, where
$\mathcal{P}(\mathcal{L}^2)$ denotes the power set of $\mathcal{L}^2$.
For any point $(u,v)\in\mathrm{Dom}\,\widetilde{E}_\free^K$, 
it holds $(\zeta,\zeta_\Ga)\in\del\widetilde E_\free^K(u,v)$ if and only if
\begin{align*}
 \bigscp{(\zeta,\zeta_\Ga)}{(u^\prime,v^\prime) - (u,v)}_{\mathcal{L}^2} \leq \widetilde E_\free^K(u^\prime,v^\prime) - \widetilde E_\free^K(u,v) \qquad\text{for all~}(u^\prime,v^\prime)\in\mathcal{L}^2.
\end{align*}
Moreover, the essential domain of $\del \widetilde{E}_\free^K$ is given by
\begin{equation*}
    D(\del \widetilde{E}_\free^K) 
    = \big\{ (u,v) \in \mathrm{Dom}\,\widetilde{E}_\free^K : 
    \del \widetilde{E}_\free^K(u,v) \neq \emptyset \big\}.
\end{equation*}
We also remind the reader of the following characterization of the subdifferential $\partial\widetilde E_\free^K$, which has been proven in \cite[Proposition~4.2]{Giorgini2026}.

\begin{proposition}\label{App:Proposition:Subdiff}
    The subdifferential $\partial \widetilde{E}_\free^K$ of $\widetilde{E}_\free^K$ is a maximal monotone operator on $\mathcal{L}^2$, whose essential domain is given by
    \begin{align*}
        D(\partial \widetilde{E}_\free^K) = \{(u, v)\in\mathcal{H}^2: (F_0^\prime(u), G_0^\prime(v))\in\mathcal{L}^2, \ K\deln u = \alpha v - u \ \text{a.e.~on }\Ga\}.
    \end{align*}
    For any $(u,v)\in D(\partial \widetilde{E}_\free^K)$, it holds
    \begin{align}
    \label{EQ:DELE}
        \partial \widetilde{E}_\free^K(u, v) = (-\Lap u + F_0^\prime(u), -\Lapg v + G_0^\prime(u) + \alpha\deln u).
    \end{align}
\end{proposition}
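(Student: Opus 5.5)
The plan is to prove the characterization in two steps: first the inclusion "$\supseteq$" by a direct convexity computation, then maximal monotonicity and the reverse inclusion "$\subseteq$" by solving a resolvent problem and showing its solutions are $\mathcal{H}^2$-regular. Maximal monotonicity is itself immediate. $\widetilde E_\free^K$ is proper, convex and lower semicontinuous on $\mathcal{L}^2$, so $\partial\widetilde E_\free^K$ is maximal monotone by Rockafellar's theorem.

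\textbf{Step 1: the inclusion "$\supseteq$".} Let $(u,v)$ belong to the claimed domain, i.e.\ $(u,v)\in\mathcal{H}^2$, $(F_0'(u),G_0'(v))\in\mathcal{L}^2$ and $K\deln u=\alpha v-u$ on $\Ga$. Put $A(u,v):=(-\Lap u+F_0'(u),\,-\Lapg v+G_0'(v)+\alpha\deln u)$. For $(u',v')\in\mathrm{Dom}\,\widetilde E_\free^K$, I integrate by parts in the bulk and on the closed manifold $\Ga$. This gives
\[
\bigl(A(u,v),(u'-u,v'-v)\bigr)_{\mathcal{L}^2}
=\intO \Grad u\cdot\Grad(u'-u)+F_0'(u)(u'-u)\dx+\intG \Gradg v\cdot\Gradg(v'-v)+G_0'(v)(v'-v)\dG+R,
\]
where the boundary remainder is
\[
R=\intG \deln u\,\bigl[\alpha(v'-v)-(u'-u)\bigr]\dG .
\]
The remainder is handled by cases on $K$. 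If $K=\infty$, then $\deln u=0$ and $R=0$. If $K=0$, both pairs lie in $\mathcal{H}^1_{0,\alpha}$, so the bracket vanishes and $R=0$. If $K\in(0,\infty)$, then $\deln u=\chi(K)(\alpha v-u)$, and $R$ is exactly the derivative of the quadratic coupling term. Convexity of $F_0$, $G_0$ and of the quadratic terms then bounds the right-hand side by $\widetilde E_\free^K(u',v')-\widetilde E_\free^K(u,v)$.

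One technical point here: $F_0'(u)(u'-u)$ must be integrable and the convexity inequality must hold pointwise even where $|u'|=1$. I would handle this using $F_0\in C([-1,1])$ and the fact that $|u|<1$ a.e., which follows from $F_0'(u)\in L^2$.

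\textbf{Step 2: the inclusion "$\subseteq$" by resolvent regularity.} Since $\partial\widetilde E_\free^K$ is maximal monotone and $A$ is a monotone restriction of it by Step 1, it suffices to show that $I+A$ is surjective. Then $A$ is itself maximal monotone and must coincide with $\partial\widetilde E_\free^K$. So for given $(f,g)\in\mathcal{L}^2$ I would solve
\[
u+A_1(u,v)=f,\qquad v+A_2(u,v)=g,
\]
with the boundary condition encoded by the space $\mathcal{H}^1_{K,\alpha}$. The natural variational approach is to minimize $\widetilde E_\free^K(u,v)+\tfrac12\|(u,v)-(f,g)\|_{\mathcal{L}^2}^2$. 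To obtain an Euler--Lagrange equation despite the singular potentials, I would replace $F_0,G_0$ by Yosida approximations $F_{0,\lambda},G_{0,\lambda}$. For these, weak solutions in $\mathcal{H}^1_{K,\alpha}$ exist by Lax--Milgram combined with Minty's theorem.

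\textbf{Step 3: uniform estimates (main obstacle).} The difficulty is obtaining $\lambda$-uniform bounds on $\|F_{0,\lambda}'(u_\lambda)\|_{L^2(\Om)}$ and $\|G_{0,\lambda}'(v_\lambda)\|_{L^2(\Ga)}$. I would test the bulk equation with $F_{0,\lambda}'(u_\lambda)$ and the surface equation with $G_{0,\lambda}'(v_\lambda)$. This produces a boundary cross term $\intG \deln u_\lambda\,[\alpha G'_{0,\lambda}(v_\lambda)-F'_{0,\lambda}(u_\lambda)]\dG$, whose sign is not given. To control it I would use the domination property \eqref{Assumption:NSCH:DominationProperty}, together with $|\alpha|\le1$ and monotonicity, presumably testing the surface equation with a suitable multiple of $F_{0,\lambda}'(\alpha v_\lambda)$. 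The case $K=0$ is the hardest, because there the trace identity $u=\alpha v$ must be exploited.

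Once these bounds are available, elliptic regularity for the bulk problem (Neumann or Robin data when $K>0$) and for the coupled bulk-surface elliptic system (when $K=0$) gives uniform $\mathcal{H}^2$ bounds. Passing to the limit $\lambda\to0$ by monotonicity then yields a solution in the claimed domain.
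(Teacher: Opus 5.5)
The paper gives no proof of this proposition. It is quoted from \cite[Proposition~4.2]{Giorgini2026}, so there is nothing in the text to compare with step by step. On its own merits your architecture is the standard one, and Steps~1--2 are correct.
\begin{itemize}
\item Rockafellar's theorem gives maximal monotonicity.
\item Your case analysis of the boundary remainder $R$ shows that $A$ is a restriction of $\partial\widetilde E_\free^K$. Here $A$ uses $G_0'(v)$, which is what \eqref{EQ:DELE} should read.
\item Since $(I+\partial\widetilde E_\free^K)^{-1}$ is single-valued, surjectivity of $I+A$ forces $A=\partial\widetilde E_\free^K$.
\end{itemize}
Everything therefore rests on Step~3, which you leave at ``presumably''.

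\textbf{What makes Step~3 work.} For $\alpha\neq0$, test with the pair $\bigl(F_{0,\lambda}'(u_\lambda),\,\alpha^{-1}F_{0,\lambda}'(\alpha v_\lambda)\bigr)$, not with $G_{0,\lambda}'(v_\lambda)$ on the surface. The coupling term then behaves as follows.
\begin{itemize}
\item For $K=0$ the pair lies in $\mathcal{H}^1_{0,\alpha}$ because the traces match, so no cross term arises.
\item For $K\in(0,\infty)$ the coupling term becomes $\chi(K)\intG(u_\lambda-\alpha v_\lambda)\bigl(F_{0,\lambda}'(u_\lambda)-F_{0,\lambda}'(\alpha v_\lambda)\bigr)\dG\ge0$.
\item For $K=\infty$ and for $\alpha=0$ the problem decouples.
\end{itemize}
The surface reaction term $\alpha^{-1}G_{0,\lambda}'(v_\lambda)F_{0,\lambda}'(\alpha v_\lambda)$ is pointwise nonnegative. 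By domination it is at least $(|\alpha|\kappa_1)^{-1}\bigl(|F_{0,\lambda}'(\alpha v_\lambda)|^2-\kappa_2|F_{0,\lambda}'(\alpha v_\lambda)|\bigr)$. This absorbs $\alpha^{-1}\intG g\,F_{0,\lambda}'(\alpha v_\lambda)\dG$ and gives a $\lambda$-uniform bound on $F_{0,\lambda}'(u_\lambda)$ in $L^2(\Om)$.

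This needs the domination at the Yosida level: $|F_{0,\lambda}'(\alpha s)|\le\kappa_1|G_{0,\lambda}'(s)|+\kappa_2$ for all $s\in\R$. A Colli--Fukao-type resolvent comparison gives it with $G_0$ regularized at parameter $\kappa_1\lambda/|\alpha|$. Hence it holds with the same $\lambda$ once $\kappa_1\ge1\ge|\alpha|$. That is where $|\alpha|\le1$ really enters.

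\textbf{What your outline gets out of order.} This test yields no bound on $G_{0,\lambda}'(v_\lambda)$, because domination only goes one way. So the coupled elliptic regularity in your last paragraph cannot be applied yet.
\begin{itemize}
\item For $K\in(0,\infty]$, test the surface equation with $G_{0,\lambda}'(v_\lambda)$. Its right-hand side $g-\alpha\chi(K)(\alpha v_\lambda-u_\lambda)$ is already bounded in $L^2(\Ga)$ by the $\mathcal{H}^1$ estimate, so the bound follows.
\item For $K=0$ you must first bound $\deln u_\lambda$ in $L^2(\Ga)$. This comes from $\Lap u_\lambda$ bounded in $L^2(\Om)$ and $u_\lambda\vert_\Ga=\alpha v_\lambda$ bounded in $H^1(\Ga)$, via the Dirichlet problem with $H^1(\Ga)$ data ($H^{3/2}$-regularity). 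Only then does the surface test with $G_{0,\lambda}'(v_\lambda)$ close.
\end{itemize}
After that, $v_\lambda$ is bounded in $H^2(\Ga)$ and then $u_\lambda$ in $H^2(\Om)$, uniformly in $\lambda$. The limit $\lambda\to0$ is then as you describe. So $K=0$ is delicate because of this normal-derivative step, not because of the cross term, which the trace identity actually eliminates.
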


Next, we recall the notion of graph convergence, see \cite[Section~3.7, Definition~3.58]{Attouch1984}.

\begin{definition}
    Let $H$ be a Hilbert space, and let $A, A_n:H\rightarrow H$, $n\in\N$, be maximal monotone operators. We say that  $\{A_n\}_{n\in\N}$ converges to $A$ in the sense of graphs if for every $y\in Ax$, there exists a sequence $y_n\in A_nx_n$ such that $x_n\rightarrow x$ and $y_n\rightarrow y$ strongly in $H$ as $n\rightarrow\infty$.
\end{definition}

We conclude this section with the following result concerning the graph convergence of the subdifferential $\partial\widetilde E_\free^K$.

\begin{proposition}\label{Prelim:Proposition:GraphConvergence}
    Let $\{K_n\}_{n\in\N}\subset(0,\infty)$ be a sequence such that $K_n\rightarrow K$ as $n\rightarrow\infty$ where $K\in\{0,\infty\}$. Then $\partial\widetilde E_\free^{K_n}\rightarrow\partial\widetilde E_\free^K$ in the sense of graphs as $n\rightarrow\infty$.
    In particular, let $\{(u_n,v_n)\}_{n\in\N}\subset \mathrm{Dom}\, \widetilde E_\free^{K_n}$, and suppose there exists $(u,v), (f,f_\Ga)\in\mathcal{L}^2$ such that
    \begin{alignat*}{2}
        (u_n,v_n)&\rightarrow (u,v) &&\qquad\text{strongly in~}\mathcal{L}^2, \\
        \partial\widetilde E_\free^{K_n}(u_n,v_n) &\rightarrow (f,f_\Ga) &&\qquad\text{weakly in~}\mathcal{L}^2
    \end{alignat*}
    as $n\rightarrow\infty$. Then $(u,v)\in D(\partial\widetilde E_\free^K)$ and $\partial\widetilde E_\free^K(u,v) = (f,f_\Ga)$.
\end{proposition}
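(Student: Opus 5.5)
The plan is to derive the graph convergence directly from the Mosco convergence in Proposition~\ref{Proposition:MoscoConvergence} via Attouch's theorem, and then deduce the sequential closedness statement from graph convergence together with maximal monotonicity.

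\textbf{Graph convergence.} Each $\widetilde E_\free^{K_n}$ and $\widetilde E_\free^K$ is proper, lower semicontinuous and convex on the Hilbert space $\mathcal{L}^2$. The definition of Mosco convergence is stated for functionals with values in $[0,+\infty]$. We therefore first check that the functionals are bounded below uniformly in $n$. This holds because $F_0,G_0$ are continuous on $[-1,1]$, so they are bounded below, and the remaining terms are nonnegative. Adding a common constant shifts nothing in the subdifferentials and nothing in Mosco convergence. Attouch's theorem \cite[Theorem~3.66]{Attouch1984} states that Mosco convergence of such functionals is equivalent to graph convergence of their subdifferentials, under a normalization condition that the constant shift takes care of. Proposition~\ref{Proposition:MoscoConvergence} then yields $\partial\widetilde E_\free^{K_n}\to\partial\widetilde E_\free^K$ in the sense of graphs. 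By Proposition~\ref{App:Proposition:Subdiff}, all these subdifferentials are maximal monotone, and they are single-valued on their domains.

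\textbf{Closedness statement.} Let $(u_n,v_n)\to(u,v)$ strongly in $\mathcal{L}^2$. The hypothesis implicitly requires $(u_n,v_n)\in D(\partial\widetilde E_\free^{K_n})$, and we set $y_n\coloneqq\partial\widetilde E_\free^{K_n}(u_n,v_n)\rightharpoonup(f,f_\Ga)$ weakly in $\mathcal{L}^2$. Take any $x'\in D(\partial\widetilde E_\free^K)$ with $y'=\partial\widetilde E_\free^K(x')$. By graph convergence there exist $x_n'\to x'$ and $y_n'\in\partial\widetilde E_\free^{K_n}(x_n')$ with $y_n'\to y'$, both strongly. Monotonicity of $\partial\widetilde E_\free^{K_n}$ gives
\begin{align*}
\bigscp{y_n-y_n'}{(u_n,v_n)-x_n'}_{\mathcal{L}^2}\ge0.
\end{align*}
Each such pairing is between one strongly convergent and one weakly convergent sequence (or two strongly convergent ones), so we may pass to the limit and obtain
\begin{align*}
\bigscp{(f,f_\Ga)-y'}{(u,v)-x'}_{\mathcal{L}^2}\ge0
\end{align*}
for every $(x',y')$ in the graph of $\partial\widetilde E_\free^K$. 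Maximal monotonicity of $\partial\widetilde E_\free^K$ then gives $(u,v)\in D(\partial\widetilde E_\free^K)$ and $(f,f_\Ga)\in\partial\widetilde E_\free^K(u,v)$. The characterization \eqref{EQ:DELE} shows this set is a singleton, so equality holds.

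\textbf{Main obstacle.} The only delicate point is verifying that Attouch's theorem applies. Its precise statement requires Mosco convergence together with a normalization, namely the existence of $(x_n,y_n)$ in the graph of $\partial\Phi_n$ converging to some $(x,y)$ in the graph of $\partial\Phi$ with $\Phi_n(x_n)\to\Phi(x)$; equivalently, one shifts the functionals by constants. Here no normalization issue arises, because Mosco convergence holds for the unshifted functionals and they are uniformly bounded below.

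If one prefers to avoid the citation, graph convergence can be shown by hand through resolvents. For fixed $(g,g_\Ga)$, the minimizer of $\widetilde E_\free^{K_n}(z)+\tfrac12\|z-(g,g_\Ga)\|^2_{\mathcal{L}^2}$ converges strongly to the corresponding minimizer for $K$. This follows from \ref{Mosco1} and \ref{Mosco2} by a standard $\Gamma$-convergence argument, where strong convergence is obtained by comparing the energies with the recovery sequence. Setting $(g,g_\Ga)=x+y$ for $y\in\partial\widetilde E_\free^K(x)$ then produces the required approximating pairs $(x_n,y_n)$ with $y_n\coloneqq(g,g_\Ga)-x_n$.
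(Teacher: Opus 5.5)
Your proposal is correct and follows the paper's proof, which obtains graph convergence from Proposition~\ref{Proposition:MoscoConvergence} via \cite[Theorem~3.66]{Attouch1984} and the closedness statement from \cite[Proposition~3.59]{Attouch1984}; the proof of that proposition is exactly your limit argument using monotonicity and then maximal monotonicity. One small correction: in Attouch's theorem the normalization condition is a consequence of Mosco convergence in the direction you use, and is needed as a hypothesis only for the converse (graph convergence $\Rightarrow$ Mosco convergence), so no constant shift is required --- in fact $\widetilde E_\free^K\geq 0$ already, since convexity together with $F_0(0)=F_0'(0)=G_0(0)=G_0'(0)=0$ gives $F_0,G_0\geq 0$.
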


\begin{proof}
    In view of Proposition~\ref{Proposition:MoscoConvergence}, the asserted graph convergence follows from \cite[Section~3.8, Theorem~3.66]{Attouch1984}. The second assertion then follows by applying \cite[Section~3.7, Proposition~3.59]{Attouch1984}.
\end{proof}

\section{Existence of weak solutions}
\label{Section:Existence}

In this section, we present the proof of Theorem~\ref{NSCH:Theorem:ExistenceWeakSolutions}, which is based on an implicit time-discretization scheme. However, as discussed in the introduction, this approach cannot be applied directly for $K = 0$ because it requires an approximation of the initial data with higher regularity. Indeed, the core part of the proof requires $\mathcal{H}^2$-regular initial data, whereas we only assume that $(\phi_0,\psi_0)\in\mathcal{H}^1_{K,\alpha}$ and that \eqref{NSCH:cond:init} holds.

To overcome this difficulty, we employ Lemma~\ref{NSCH:Lemma:ApproxIC}, which provides a sequence of admissible initial data $\{(\phi_0^N,\psi_0^N)\}_{N\in\N}\subset\mathcal{H}^2$ such that $(\phi_0^N,\psi_0^N)\rightarrow(\phi_0,\psi_0)$ strongly in $\mathcal{H}^1$ as $N\rightarrow\infty$, and satisfying \eqref{NSCH:cond:init}. In particular, it holds
\begin{align}\label{nsch:remark}
	\abs{\phi_0^N} \leq 1 \quad\text{a.e.~in~}\Om \quad\text{and} \quad\abs{\psi_0^N} \leq 1 \quad\text{a.e.~on~}\Ga.
\end{align}
Nevertheless, in the case $K = 0$, i.e., when $\phi_0 = \alpha\psi_0$ a.e.~on $\Ga$, this relation is in general not preserved by this approximation, so that $\phi_0^N = \alpha\psi_0^N$ a.e.~on $\Ga$ cannot be guaranteed. 

A natural alternative would be to consider a coupled bulk-surface system instead of two uncoupled heat equations as in the proof of Lemma~\ref{NSCH:Lemma:ApproxIC}. For example, one could consider the system
\begin{alignat*}{2}
	\delt u - \Lap u &= 0 &&\qquad\text{in~}Q_T, \\
	\delt v - \Lapg v + \alpha\deln u &= 0 &&\qquad\text{on~}\Sigma_T, \\
	u &= \alpha v &&\qquad\text{on~}\Sigma_T, \\
	(u,v)\vert_{t=0} &= (\phi_0,\psi_0) &&\qquad\text{in~}\Om\times\Gamma.
\end{alignat*}
While this preserves the coupling condition, it only yields the estimates
\begin{align*}
	\abs{u} \leq 1 \quad\text{a.e.~in~}Q_T \quad\text{and}\quad \abs{\alpha}\abs{v} \leq 1 \quad\text{a.e.~on~}\Sigma_T.
\end{align*}
Since $\abs{\alpha} \leq 1$, the latter bound is insufficient to deduce $\abs{v} \leq 1$ a.e.~on $\Sigma_T$, which is essential for the analysis. We remark that such a system was previously considered in \cite{Gal2019} to approximate the initial data. However, in that work, the authors first studied the case of regular potentials, and therefore did not require the bound $\abs{v} \leq 1$ a.e.~on $\Ga$ at that stage of the argument.

For this reason, the proof proceeds in two steps. First, we establish the existence of weak solutions in the case $K\in(0,\infty]$ and $L\in[0,\infty]$ by means of an implicit time-discretization scheme. In a second step, we treat the case $K = 0$ by passing to the asymptotic limit $K\rightarrow 0$, following a similar strategy used in \cite{Knopf2025a}, where the authors considered the asymptotic limit $L\rightarrow 0$ to show the existence of a weak solution to \eqref{System:NSCH} for constant mobilities in the case $K\in[0,\infty]$ and $L = 0$. We also refer to \cite{Colli2019a} and \cite{Knopf2020}, where the asymptotic limit $K\rightarrow 0$ has been used to establish the existence of weak solutions to the limiting model for an Allen--Cahn equation with Allen--Cahn-type dynamic boundary condition and for a Cahn--Hilliard equation with Cahn--Hilliard-type dynamic boundary condition, respectively.

\subsection{Existence of weak solutions for \texorpdfstring{$K\in(0,\infty]$}{K>0}}

In this section, we prove the following theorem, which provides the existence of a weak solution to \eqref{System:NSCH} in the case $K\in(0,\infty]$ and $L\in[0,\infty]$.

\begin{theorem}\label{NSCH:Theorem:ExistenceWeakSolutions:K>0}
Suppose that the assumptions \ref{Assumption:NSCH:1}-\ref{Assumption:NSCH:Potentials} hold.
Let $K\in(0,\infty]$, $L\in[0,\infty]$, $(\bv_0,\bw_0)\in\mathbfcal{L}^2_\Div$, and let $(\phi_0,\psi_0)\in\mathcal{H}^1_{K,\alpha}$ satisfy \eqref{NSCH:cond:init}. If $L = 0$, we additionally assume that
    \begin{align}\label{Dens:Cond}
    	\beta(\tilde\sigma_2 - \tilde\sigma_1) = -(\tilde\rho_2 - \tilde\rho_1).
    \end{align}
    Then, there exists at least one global-in-time weak solution $(\bv,\bw,\phi,\psi,\mu,\theta)$ to system \eqref{System:NSCH} in the sense of Definition~\ref{NSCH:Definition:WeakSolution}. Additionally, the following energy inequalities hold: for any $t\in[s,\infty)$ and a.e. any $s\in[0,\infty)$, with $s = 0$ included, it holds
    \begin{align}
        &E_\mathrm{kin}(\bv(t),\bw(t),\phi(t),\psi(t)) \nonumber \\
        &\qquad + \int_s^t\intO 2\nu_\Om(\phi)\abs{\D\bv}^2\dxtau + \int_s^t\intG 2\nu_\Ga(\psi)\abs{\Dg\bw}^2\dGtau + \int_s^t\intG \gamma(\phi,\psi)\abs{\bw}^2\dGtau \nonumber \\
        &\quad\leq E_\mathrm{kin}(\bv(s),\bw(s),\phi(s),\psi(s)) - \int_s^t\intO \phi\bv\cdot\Grad\mu\dxtau - \int_s^t\intG \psi\bw\cdot\Gradg\theta\dGtau \label{EnergyInequality:Kinetic:K>0}
    \end{align}
    as well as
    \begin{align}
        &E_\free(\phi(t),\psi(t)) \nonumber \\
        &\qquad + \int_s^t\intO m_\Om(\phi)\abs{\Grad\mu}^2\dxtau + \int_s^t\intG m_\Ga(\psi)\abs{\Gradg\theta}^2\dGtau + \chi(L)\int_s^t\intG(\beta\theta-\mu)^2\dGtau \nonumber \\
        &\quad\leq E_\free(\phi(s),\psi(s)) + \int_s^t\intO \phi\bv\cdot\Grad\mu\dxtau + \int_s^t\intG \psi\bw\cdot\Gradg\theta\dGtau. \label{EnergyInequality:Free:K>0}
    \end{align}
\end{theorem}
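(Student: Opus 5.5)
The plan follows the implicit time-discretization of \cite{Abels2013}, adapted to the bulk-surface setting via the subdifferential characterization in Proposition~\ref{App:Proposition:Subdiff}. First, by Lemma~\ref{NSCH:Lemma:ApproxIC}, I replace $(\phi_0,\psi_0)$ by $\mathcal{H}^2$-data $(\phi_0^N,\psi_0^N)$ satisfying \eqref{NSCH:cond:init}. Since $K>0$, the space $\mathcal{H}^1_{K,\alpha}=\mathcal{H}^1$ imposes no trace constraint, so these data are admissible. I also approximate $(\bv_0,\bw_0)$ in $\mathbfcal{L}^2_\Div$ by smooth fields.

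\emph{Discrete scheme.} For a step size $h=1/N$ and given $(\bv_k,\bw_k,\phi_k,\psi_k)$, I seek $(\bv,\bw,\phi,\psi,\mu,\theta)$ with $(\bv,\bw)\in\mathbfcal{H}^1_{0,\Div}$, $(\phi,\psi)\in D(\partial\widetilde E_\free^K)$ and $(\mu,\theta)\in\mathcal{H}^1_{L,\beta}$. The momentum equation is discretized as
\[
\frac{\rho(\phi)\bv-\rho(\phi_k)\bv_k}{h}+\Div(\rho(\phi_k)\bv\otimes\bv)+\ldots,
\]
with the Abels--Depner--Garcke type correction terms $\frac12\frac{\rho(\phi)-\rho(\phi_k)}{h}\bv$ and $\frac12\Div(\J)\bv$ added in the bulk and their surface analogues on $\Gamma$. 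I expect these surface analogues to be the $\frac{\chi(L)}{2}(\ldots)(\beta\theta-\mu)\bw$ term. The forcing is $\mu\Grad\phi_k$ and $\theta\Gradg\psi_k$, and the coefficients $m,\nu,\gamma$ are evaluated at $(\phi_k,\psi_k)$. The Cahn--Hilliard part is a backward Euler step with transport by $\phi_k\bv,\psi_k\bw$ and the bulk-surface mobility operator. It is closed by
\[
(\mu,\theta)+(c_F\tfrac{\phi+\phi_k}{2},\,c_G\tfrac{\psi+\psi_k}{2})\in\partial\widetilde E^K_\free(\phi,\psi),
\]
using the convex--concave splitting $\mu=\partial\widetilde E-c\,\phi_k$. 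The essential point is a discrete energy inequality. Testing the momentum equation with $(\bv,\bw)$, the Cahn--Hilliard equation with $(\mu,\theta)$, and the time difference with $(\phi-\phi_k,\psi-\psi_k)$, convexity of $\widetilde E^K_\free$ together with the concavity of the quadratic part should give
\[
E_\tot(\bv,\bw,\phi,\psi)+h\,\mathcal D\le E_\tot(\bv_k,\bw_k,\phi_k,\psi_k).
\]
The capillary terms $\mu\Grad\phi_k\cdot\bv$ cancel against the transport terms. For $L=0$, condition \eqref{Dens:Cond} removes the surface flux term in the kinetic balance, because the boundary contribution $\frac12(\J\cdot\n+\J_\Ga\cdot\n)|\bw|^2$ is otherwise uncontrolled.

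\emph{Solvability of each step.} I would solve each step by the Leray--Schauder principle. The step is written as $\mathcal L(w)-\mathcal F_k(w)=0$, where $\mathcal L$ is invertible and combines the Stokes-type bulk-surface operator with Korn's lemma, the mobility operator on $\mathcal{V}^1_{L,\beta}$ (handling the mean via Lemma~\ref{Lemma:Poincare}), and the maximal monotone $\partial\widetilde E^K_\free$ plus the identity. The remainder $\mathcal F_k$ is compact. A priori bounds for the homotopy come from the discrete energy estimate. The mean of $(\mu,\theta)$ must also be controlled, and I would do this by the standard argument using \eqref{NSCH:cond:init:mean:L}: test with $(\phi-\mathrm{mean},\psi-\mathrm{mean})$ and use that $F_0'(s)(s-m)\ge c|F_0'(s)|-C$ for $m\in(-1,1)$. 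The domination property \eqref{Assumption:NSCH:DominationProperty} enters here, as in \cite{Giorgini2026}, to obtain $\mathcal{L}^2$ bounds on $(F_0'(\phi),G_0'(\psi))$ and $\mathcal{H}^2$ bounds. I expect this step, specifically the uniform bound on the mean of the chemical potentials under all coupling regimes for $L$, to be the main technical obstacle.

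\emph{Passage to the limit.} I define piecewise constant and piecewise linear interpolants. Summing the discrete energy inequality gives uniform bounds in the spaces of Definition~\ref{NSCH:Definition:WeakSolution}, and the elliptic estimates yield $(\phi,\psi)\in L^2_\uloc(\mathcal{H}^2)$. Time-difference bounds on $(\phi,\psi)$ in $(\mathcal{H}^1_{L,\beta})'$ and on $\mathbf P(\rho\bv,\sigma\bw)$ in a negative space, combined with Aubin--Lions and the approach of \cite{Abels2013}, give strong convergence of $(\phi,\psi)$ in $L^2(\mathcal{H}^1)$ and of $(\bv,\bw)$ in $L^2(\mathbfcal{L}^2)$. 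I then identify the nonlinear terms and pass to the limit in the subdifferential inclusion using the demiclosedness of the maximal monotone operator. The energy inequalities \eqref{EnergyInequality:Kinetic:K>0}--\eqref{EnergyInequality:Free:K>0} follow for a.e.\ $s$ by lower semicontinuity, using strong convergence at a.e.\ time. Letting $N\to\infty$ in the initial data, with Lemma~\ref{Prelim:Lemma:BC_w} for weak continuity, concludes the argument.
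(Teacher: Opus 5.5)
Your plan matches the paper's proof: an Abels--Depner--Garcke implicit time discretization with $\mathcal{H}^2$-approximated initial data from Lemma~\ref{NSCH:Lemma:ApproxIC}, followed by a discrete energy inequality and Leray--Schauder with $I+\partial\widetilde E_\free^K$ inverted by maximal monotonicity; then Miranville--Zelik control of the mean of $(\mu,\theta)$ plus the regularity result of \cite{Giorgini2026}; and finally Aubin--Lions, the projection argument for strong convergence of the velocities, and identification of the subdifferential by maximal monotonicity.

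One correction concerns \eqref{Dens:Cond}. When you test the discrete momentum equation with $(\bv,\bw)$, the boundary terms $\frac12(\J\cdot\n)\bw+\frac12(\J_\Ga\cdot\n)\bw$ cancel exactly against the contributions of $\Div(\bv\otimes\J)$ and $\Divg(\bw\otimes\K)$ combined with the discrete mass balances, and this holds for every $L$. The $\chi(L)$-term you mention is just this model term rewritten via the boundary condition; it is not the surface analogue of the correction terms. So \eqref{Dens:Cond} is not needed for the energy estimate. It is used only for $L=0$ when passing $N\to\infty$ in the momentum equation: there it gives $\J+\J_\Ga\equiv\mathbf 0$ and removes the term $m_\Om(\phi_h^N)\deln\mu^N\,\bw^N\cdot\ww$, for which no uniform bound is available. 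For $L\in(0,\infty)$ this term equals the $\chi(L)(\beta\theta^N-\mu^N)$-term, which the energy controls.
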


Before presenting the proof of Theorem~\ref{NSCH:Theorem:ExistenceWeakSolutions:K>0}, we set up a time-discretization scheme and show the existence of approximate solutions to the time-discrete system.

\noindent
\textbf{An implicit time-discretization scheme}

We fix $N\in\N$ and denote the time-step size by $h = \frac1N$. For $k\in\N_0$, let $(\bv_k,\bw_k)\in\mathbfcal{L}^2_\Div$ and $(\phi_k,\psi_k)\in\mathcal{H}^1$ with $(F_0^\prime(\phi_k),G_0^\prime(\psi_k))\in\mathcal{L}^2$ be given, and set $(\rho_k,\sigma_k) = (\rho(\phi_k),\sigma(\psi_k))$. Then, we construct 
\begin{align*}
	(\bv,\bw,\phi,\psi,\mu,\theta) = (\bv_{k+1},\bw_{k+1},\phi_{k+1},\psi_{k+1},\mu_{k+1},\theta_{k+1})
\end{align*}
as a solution of the nonlinear system \eqref{NSCH:Discr:System}, where
\begin{align*}
    \J &= \J_{k+1} \coloneqq -\frac{\tilde\rho_2-\tilde\rho_1}{2}m_\Om(\phi_k)\Grad\mu_{k+1} = -\frac{\tilde\rho_2-\tilde\rho_1}{2}m_\Om(\phi_k)\Grad\mu, \\
    \K &= \K_{k+1} \coloneqq -\frac{\tilde\sigma_2-\tilde\sigma_1}{2}m_\Ga(\psi_k)\Gradg\theta_{k+1} = -\frac{\tilde\sigma_2-\tilde\sigma_1}{2}m_\Ga(\psi_k)\Gradg\theta, \\
    \J_\Ga &= \J_{\Ga,k+1} \coloneqq -\beta\frac{\tilde\sigma_2-\tilde\sigma_1}{2}m_\Om(\phi_k)\Grad\mu_{k+1} = -\beta\frac{\tilde\sigma_2-\tilde\sigma_1}{2}m_\Om(\phi_k)\Grad\mu.
\end{align*}
Our aim is to find $(\bv,\bw,\phi,\psi,\mu,\theta)$ with $(\bv,\bw)\in\mathbfcal{H}^1_{0,\Div}$, $(\phi,\psi)\in\mathrm{dom}\,(\partial\widetilde E_\free)$ and $(\mu,\theta)\in\mathcal{H}^2_{L,\beta}$ such that
\begin{subequations}\label{NSCH:Discr:System}
    \begin{align}\label{NSCH:VW:Discr}
        &\Big(\frac{\rho\bv - \rho_k\bv_k}{h},\wv\Big)_{\mathbf{L}^2(\Om)} + \Big(\frac{\sigma\bw - \sigma_k\bw_k}{h},\ww\Big)_{\mathbf{L}^2(\Ga)} \nonumber \\
        &\qquad + \big(\Div(\rho_k\bv\otimes\bv),\wv\big)_{\mathbf{L}^2(\Om)} + \big(\Divg(\sigma_k\bw\otimes\bw),\ww\big)_{\mathbf{L}^2(\Ga)}  \nonumber \\
        &\qquad + \left(2\nu_\Om(\phi_k)\D\bv,\D\wv\right)_{\mathbf{L}^2(\Om)} + \big(2\nu_\Ga(\psi_k)\Dg\bw,\Dg\ww\big)_{\mathbf{L}^2(\Ga)} + \big(\gamma(\phi_k,\psi_k)\bw,\ww\big)_{\mathbf{L}^2(\Ga)}\\
        &\qquad + \big(\Div(\bv\otimes\J),\wv\big)_{\mathbf{L}^2(\Om)}+ \big(\Divg(\bw\otimes\K),\ww\big)_{\mathbf{L}^2(\Ga)} \nonumber \\
        &\quad = \big(\mu\Grad\phi,\wv\big)_{\mathbf{L}^2(\Om)} + \big(\theta\Gradg\psi,\ww\big)_{\mathbf{L}^2(\Ga)} + \frac12\big((\J\cdot\n)\bw,\ww\big)_{\mathbf{L}^2(\Ga)} + \frac12\big((\J_\Ga\cdot\n)\bw,\ww\big)_{\mathbf{L}^2(\Ga)} \nonumber
    \end{align}
    for all $(\wv,\ww)\in\mathbfcal{H}_{0,\Div}^2$, as well as
    \begin{alignat}{2}
        &\frac{\phi - \phi_k}{h} + \bv\cdot\Grad\phi_k = \Div(m_\Om(\phi_k)\Grad\mu) &&\qquad\text{a.e.~in~}\Om, \label{NSCH:Phi:Discr}\\
        &\mu + \frac{c_F}{2}(\phi + \phi_k) = - \Lap\phi + F_0^\prime(\phi) &&\qquad\text{a.e.~in~}\Om, \label{NSCH:Mu:Discr}\\
        &\frac{\psi - \psi_k}{h} + \bw\cdot\Gradg\psi_k = \Divg(m_\Ga(\psi_k)\Gradg\theta) - \beta m_\Om(\phi_k)\deln\mu &&\qquad\text{a.e.~on~}\Ga, \label{NSCH:Psi:Discr}\\
        &\theta + \frac{c_G}{2}(\psi + \psi_k) = -\Lapg\psi + G_0^\prime(\psi) + \alpha\deln\phi &&\qquad\text{a.e.~on~}\Ga, \label{NSCH:Theta:Discr} \\
        &K\deln\phi = \alpha\psi - \phi, \qquad\qquad\;\,\, K\in(0,\infty], &&\qquad\text{a.e.~on~}\Ga, \label{NSCH:Phi:Discr:Bc} \\
        &Lm_\Om(\phi_k)\deln\phi = \beta\theta - \mu , \qquad L\in[0,\infty], &&\qquad\text{a.e.~on~}\Ga. \label{NSCH:Mu:Discr:Bc}
    \end{alignat}
\end{subequations}
Here, and in the following, we use for brevity the notation
\begin{align*}
	\rho \coloneqq \rho(\phi), \qquad \sigma \coloneqq \sigma(\psi),
\end{align*}
and the space
\begin{align*}
	\mathcal{H}^2_{L,\beta} = \begin{cases}
		\{(\zeta,\zeta_\Ga)\in\mathcal{H}^2: \zeta = \beta\zeta_\Ga \, \text{on~}\Ga\} &\text{if~}L = 0, \\
		\{(\zeta,\zeta_\Ga)\in\mathcal{H}^2: Lm_\Om(\phi_k)\deln\zeta = \beta\zeta_\Ga - \zeta \, \text{on~}\Ga\} &\text{if~}L\in(0,\infty), \\
		\{(\zeta,\zeta_\Ga)\in\mathcal{H}^2: \deln\zeta = 0 \, \text{on~}\Ga\} &\text{if~} L = \infty.
	\end{cases}
\end{align*}

\begin{remark}
    Multiplying \eqref{NSCH:Phi:Discr} with $-\frac{\tilde{\rho}_2 - \tilde{\rho}_1}{2}$ and \eqref{NSCH:Psi:Discr} with $-\frac{\tilde{\sigma}_2 - \tilde{\sigma}_1}{2}$ leads to
    \begin{subequations}
    \begin{alignat*}{2}
        -\frac{\rho - \rho_k}{h} - \bv\cdot\Grad\rho_k &= \Div\,\J &&\qquad\text{a.e.~in~}\Om, \\
        -\frac{\sigma - \sigma_k}{h} - \bw\cdot\Gradg\sigma_k &= \Divg\,\K - \J_\Ga\cdot\n &&\qquad\text{a.e.~on~}\Ga.
    \end{alignat*}
    \end{subequations}
    Using the identities $\Div(\bv\otimes\J) = (\Div\,\J)\bv + (\J\cdot\Grad)\bv$ and $\Divg(\bw\otimes\K) = (\Divg\,\K)\bw + (\K\cdot\Gradg)\bw$, we obtain the following equivalent version of \eqref{NSCH:VW:Discr}:
    \begin{align}\label{NSCH:VW:Discr:Equiv}
        &\Big(\frac{\rho\bv - \rho_k\bv_k}{h},\wv\Big)_{\mathbf{L}^2(\Om)} + \Big(\frac{\sigma\bw - \sigma_k\bw_k}{h},\ww\Big)_{\mathbf{L}^2(\Ga)} \nonumber \\
        &\qquad + \big(\Div(\rho_k\bv\otimes\bv),\wv\big)_{\mathbf{L}^2(\Om)} + \big(\Divg(\sigma_k\bw\otimes\bw),\ww\big)_{\mathbf{L}^2(\Ga)} \nonumber \\
        &\qquad + \big(2\nu_\Om(\phi_k)\D\bv,\D\wv\big)_{\mathbf{L}^2(\Om)} + \big(2\nu_\Ga(\psi_k)\Dg\bw,\Dg\ww\big)_{\mathbf{L}^2(\Ga)} + \big(\gamma(\phi_k,\psi_k)\bw,\ww\big)_{\mathbf{L}^2(\Ga)} \nonumber \\
        &\qquad + \frac12\bigg(\Big(\Div\,\J - \frac{\rho-\rho_k}{h} - \bv\cdot\Grad\rho_k\Big)\bv,\wv\bigg)_{\mathbf{L}^2(\Om)} \\
        &\qquad + \frac12\bigg(\Big(\Divg\,\K - \frac{\sigma-\sigma_k}{h} - \bw\cdot\Gradg\sigma_k\big)\bw,\ww\bigg)_{\mathbf{L}^2(\Ga)} \nonumber \\
        &\quad = \big(\mu\Grad\phi,\wv\big)_{\mathbf{L}^2(\Om)} + \big(\theta\Gradg\psi,\ww\big)_{\mathbf{L}^2(\Ga)} + \frac12\big((\J\cdot\n)\bw,\ww\big)_{\mathbf{L}^2(\Ga)} \nonumber 
    \end{align}
    for all $(\wv,\ww)\in\mathbfcal{H}^2_{0,\Div}$.
\end{remark}

\begin{remark}
    Integrating \eqref{NSCH:Phi:Discr} and \eqref{NSCH:Psi:Discr} with respect to the spatial variable, using the fact that $(\bv,\bw)\in\mathbfcal{H}^1_{0,\Div}$ and \eqref{NSCH:Mu:Discr:Bc}, we find that
    \begin{align*}
        \begin{dcases}
            \beta\intO \phi\dx + \intG \psi\dG = \beta\intO \phi_k \dx + \intG \psi_k\dG &\textnormal{if } L\in[0,\infty), \\
            \intO\phi\dx = \intO\phi_k\dx \quad\textnormal{and}\quad \intG\psi\dG = \intG\psi_k\dG &\textnormal{if } L = \infty
        \end{dcases}
    \end{align*}
    for all $k\in\N$. In particular, this observation implies that
    \begin{align*}
        \begin{dcases}
            \beta\intO \phi_k\dx + \intG \psi_k\dG = \beta\intO \phi_0 \dx + \intG \psi_0\dG &\textnormal{if~} L\in[0,\infty), \\
            \intO\phi_k\dx = \intO\phi_0\dx \quad\textnormal{and}\quad \intG\psi_k\dG = \intG\psi_0\dG &\textnormal{if~} L = \infty
        \end{dcases}
    \end{align*}
    for all $k\in\N_0$. As a result, the mass conservation property is preserved at the discrete level.
\end{remark}

\begin{lemma}\label{NSCH:Lemma:H^2+Subdiff}
    Let $(\phi_k,\psi_k)\in\mathcal{H}^2$ with 
    \begin{align}\label{NSCH:Lemma:H^2+Subdiff:<1}
    	\abs{\phi_k}\leq 1\quad\text{a.e.~in~}\Om \quad\text{and}\quad \abs{\psi_k} \leq 1 \quad\text{a.e.~on~}\Ga,
    \end{align}
    and further assume 
    \begin{subequations}
        \begin{align*}
            \beta\mean{\phi_k}{\psi_k}, \,\mean{\phi_k}{\psi_k}\in(-1,1) \qquad\text{if~}L\in[0,\infty),
        \end{align*}
        and
        \begin{align*}
            \meano{\phi_k}, \, \meang{\psi_k}\in(-1,1) \qquad\text{if~} L = \infty.
        \end{align*}
    \end{subequations}
    Let $(\phi,\psi,\mu,\theta)\in\mathrm{dom}\,(\partial \widetilde E_\free)\times\mathcal{H}^1$ be a quadruple solving $\big(\eqref{NSCH:Mu:Discr},\eqref{NSCH:Theta:Discr},\eqref{NSCH:Phi:Discr:Bc}\big)$ with
    \begin{align}\label{NSCH:MCL:Discrete}
        \begin{dcases}
            \mean{\phi}{\psi} = \mean{\phi_k}{\psi_k} &\textnormal{if~} L\in[0,\infty), \\
            \meano{\phi} = \meano{\phi_k} \quad\textnormal{and}\quad \meang{\psi} = \meang{\psi_k} &\textnormal{if~} L = \infty.
        \end{dcases}
    \end{align}    
	Then, there exists a constant $C_p = C(p,\mean{\phi_k}{\psi_k}) > 0$ such that
    \begin{align}
        \norm{(\phi,\psi)}_{\mathcal{W}^{2,p}} + \norm{(F_0^\prime(\phi),G_0^\prime(\psi))}_{\mathcal{L}^p} + \abs{\mean{\mu}{\theta}} &\leq C_p\big( 1 + \norm{(\mu,\theta)}_{L,\beta} \big), \label{NSCH:Lemma:Est:1}\\
        \norm{\partial \widetilde E_\free(\phi,\psi)}_{\mathcal{L}^p} &\leq C_p\big( 1 + \norm{(\mu,\theta)}_{\mathcal{L}^p}\big) \label{NSCH:Lemma:Est:2}
    \end{align}
    for $p = 6$ if $d = 3$ and any $p\in[2,\infty)$ if $d = 2$.
\end{lemma}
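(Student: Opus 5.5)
Our plan follows the standard strategy for singular potentials (as in \cite{Abels2013, Giorgini2026}). First we control the mean of the chemical potentials, then the singular terms in $L^2$, then elliptic regularity, and finally the $L^p$ estimates. Throughout we abbreviate $m\coloneqq\mean{\phi_k}{\psi_k}$, or the pair of bulk and surface means if $L=\infty$. By \eqref{NSCH:MCL:Discrete}, this is also the mean of $(\phi,\psi)$.

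\textbf{Step 1: the mean of $(\mu,\theta)$.} We test \eqref{NSCH:Mu:Discr} with $\phi-\beta m$ in $\Om$ and \eqref{NSCH:Theta:Discr} with $\psi-m$ on $\Ga$, and integrate by parts using \eqref{NSCH:Phi:Discr:Bc}. For $L=\infty$ we instead test with $\phi-\meano{\phi}$ and $\psi-\meang{\psi}$ separately. Here $(\phi-\beta m,\psi-m)$ has vanishing bulk-surface mean, so the constant part $\mean{\mu}{\theta}(\beta,1)$ drops out when paired with it. We then use the classical inequality (Miranville--Zelik type)
\[
F_0'(s)(s-s_0)\geq c_0\abs{F_0'(s)}-C_1 ,\qquad s_0\in(-1,1),
\]
and its analogue for $G_0$. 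This requires $\beta m\in(-1,1)$ and $m\in(-1,1)$, which is exactly the assumption on the means and is the reason the constant depends on $m$. The gradient and coupling terms appear with a good sign up to the cross term $\alpha\deln\phi\,(\psi - m)$, which is handled via the boundary condition. The remaining terms are bounded using $\abs{\phi},\abs{\psi},\abs{\phi_k},\abs{\psi_k}\leq1$ and the Poincaré inequality of Lemma~\ref{Lemma:Poincare} applied to $(\mu,\theta)$ minus its mean. This yields
\[
\norm{F_0'(\phi)}_{L^1(\Om)}+\norm{G_0'(\psi)}_{L^1(\Ga)}\le C(1+\norm{(\mu,\theta)}_{L,\beta}).
\]
Integrating \eqref{NSCH:Mu:Discr} and \eqref{NSCH:Theta:Discr} against $(\beta,1)$, or against $1$ separately if $L=\infty$, then bounds $\abs{\mean{\mu}{\theta}}$ by the same right-hand side. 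In particular $\norm{(\mu,\theta)}_{\mathcal{L}^2}\le C(1+\norm{(\mu,\theta)}_{L,\beta})$.

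\textbf{Step 2: $L^p$ bounds on the singular terms.} In the bulk we test \eqref{NSCH:Mu:Discr} with $\abs{F_0'(\phi)}^{p-2}F_0'(\phi)$, which is admissible after truncation. The term with $-\Lap\phi$ gives $\intO (p-1)\abs{F_0'}^{p-2}F_0''\abs{\Grad\phi}^2\geq 0$ plus a boundary term $-\intG \deln\phi\,\abs{F_0'(\phi)}^{p-2}F_0'(\phi)$. Handling this boundary term is \emph{the main obstacle}. For $K=\infty$ it vanishes. For $K\in(0,\infty)$ we have $-\deln\phi=(\phi-\alpha\psi)/K$, and monotonicity of $F_0'$ together with the domination property \eqref{Assumption:NSCH:DominationProperty} lets us compare $F_0'(\phi)$ with $F_0'(\alpha\psi)$. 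Our plan is to treat the bulk and surface equations jointly. We test the surface equation with $\abs{G_0'(\psi)}^{p-2}G_0'(\psi)$ and use \eqref{Assumption:NSCH:DominationProperty} to absorb the bad part of the boundary term into the good surface term, or alternatively bound it by the surface quantities already controlled. This follows the argument of \cite[Prop.~4.2]{Giorgini2026}, whose monotonicity estimates we would reuse.

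The outcome is
\[
\norm{F_0'(\phi)}_{L^p}+\norm{G_0'(\psi)}_{L^p}\le C\big(1+\norm{(\mu,\theta)}_{\mathcal{L}^p}\big)+C.
\]
Combined with Sobolev embedding $\mathcal{H}^1\emb\mathcal{L}^p$ (with $p=6$ for $d=3$ and any $p$ for $d=2$) and Step 1, this gives the $F_0',G_0'$ part of \eqref{NSCH:Lemma:Est:1}.

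\textbf{Step 3: elliptic regularity.} The pair $(\phi,\psi)$ solves the bulk-surface elliptic system $-\Lap\phi=f$, $-\Lapg\psi+\alpha\deln\phi=g$ with the Robin/Neumann coupling \eqref{NSCH:Phi:Discr:Bc}, where
\[
f=\mu+\tfrac{c_F}{2}(\phi+\phi_k)-F_0'(\phi),\qquad g=\theta+\tfrac{c_G}{2}(\psi+\psi_k)-G_0'(\psi),
\]
and both $f$ and $g$ lie in $\mathcal{L}^p$ by Step 2. Standard $\mathcal{W}^{2,p}$ regularity for this bulk-surface elliptic problem on a $C^2$ domain gives the $\mathcal{W}^{2,p}$ bound in \eqref{NSCH:Lemma:Est:1}. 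Finally, \eqref{NSCH:Lemma:Est:2} follows from \eqref{EQ:DELE}: we have $\partial\widetilde E_\free(\phi,\psi)=(\mu+\tfrac{c_F}{2}(\phi+\phi_k),\theta+\tfrac{c_G}{2}(\psi+\psi_k))$, whose $\mathcal{L}^p$ norm is bounded by $\norm{(\mu,\theta)}_{\mathcal{L}^p}$ plus a constant, since $\abs{\phi},\abs{\phi_k},\abs{\psi},\abs{\psi_k}\le1$.
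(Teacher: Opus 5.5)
Your proposal is correct and takes the same route as the paper. The paper likewise uses a Miranville--Zelik argument for $\mean{\mu}{\theta}$, the bulk-surface Poincar\'e inequality with Sobolev embedding, and the identity $\partial\widetilde E_\free(\phi,\psi)=(\mu+\frac{c_F}{2}(\phi+\phi_k),\theta+\frac{c_G}{2}(\psi+\psi_k))$ to get \eqref{NSCH:Lemma:Est:2}. The only difference is that the paper obtains the $\mathcal{W}^{2,p}$ and $\mathcal{L}^p$ bounds on $(\phi,\psi)$ and $(F_0'(\phi),G_0'(\psi))$ in one step by citing \cite[Proposition~5.1]{Giorgini2026}, whereas your Steps~2--3 reprove that result by hand: you test with $\abs{F_0'}^{p-2}F_0'$, use the domination property \eqref{Assumption:NSCH:DominationProperty}, and apply linear bulk-surface elliptic regularity. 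Your constants then depend on $K$, which is harmless for this lemma.
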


\begin{proof}
	Defining $(f,f_\Ga) \coloneqq (\mu + \frac{c_F}{2}(\phi + \phi_k), \theta + \frac{c_G}{2}(\phi + \psi_k)$, we notice that $(\phi,\psi)$ is a solution to
	\begin{alignat*}{2}
		-\Lap\phi + F_0^\prime(\phi) &= f &&\qquad\text{in~}\Om, \\
		-\Lapg\psi + G_0^\prime(\psi) + \alpha\deln\phi &= f_\Ga &&\qquad\text{on~}\Ga, \\
		K\deln\phi &= \alpha\psi - \phi &&\qquad\text{on~}\Ga.
	\end{alignat*}
	Thus, since $(f,f_\Ga)\in\mathcal{L}^p$, an application of regularity theory for systems with bulk-surface coupling and logarithmic nonlinearity from \cite[Proposition~5.1]{Giorgini2026} readily yields
	\begin{align*}
		\begin{split}
			\norm{(\phi,\psi)}_{\mathcal{W}^{2,p}} + \norm{(F_0^\prime(\phi),G_0^\prime(\psi))}_{\mathcal{L}^p} &\leq C_p\big(1 + \norm{(f,f_\Ga)}_{\mathcal{L}^p}\big) \\
			&\leq C_p\big(1 + \norm{(\mu,\theta)}_{\mathcal{L}^p}\big).
		\end{split}
	\end{align*}
	Here, we additionally exploited the assumption \eqref{NSCH:Lemma:H^2+Subdiff:<1} as well as $\abs{\phi}\leq 1$ a.e.~in $\Om$ and $\abs{\psi} \leq 1$ a.e.~on $\Ga$, which follows from $(\phi,\psi)\in\mathrm{dom}\,(\partial\widetilde E_\free)$. Next, an argument based on the Miranville--Zelik inequality (see \cite[Appendix A.1]{Miranville2004} or \cite[p.~908]{Gilardi2009}) gives
	\begin{align*}
		\abs{\mean{\mu}{\theta}} \leq C\big(1 + \norm{(\mu,\theta)}_{L,\beta}\big).
	\end{align*}
    As the necessary estimates are standard, we omit the details and refer, for instance, to \cite[Proof of Theorem~3.4, Step~4]{Knopf2025}. Thus, an application of the bulk-surface Poincar\'{e} inequality entails
	\begin{align*}
		\norm{(\phi,\psi)}_{\mathcal{W}^{2,p}} + \norm{(F_0^\prime(\phi),G_0^\prime(\psi))}_{\mathcal{L}^p} \leq C_p\big(1 + \norm{(\mu,\theta)}_{L,\beta}\big).
	\end{align*}
	Finally, the estimate \eqref{NSCH:Lemma:Est:2} on the subdifferential follows immediately in view of the identity
	\begin{align*}
		\del 	\widetilde E_\free(\phi,\psi) = \Big(\mu + \frac{c_F}{2}(\phi + \phi_k), \theta + \frac{c_G}{2}(\psi + \psi_k)\Big),
	\end{align*}
	and the above mentioned estimates on $(\phi,\psi)$ and $(\phi_k,\psi_k)$.
\end{proof}

\noindent
\textbf{Existence of solutions of the time-discrete scheme}

Next, we prove that solutions to the discrete system \eqref{NSCH:VW:Discr}-\eqref{NSCH:Mu:Discr:Bc}, if they exist, satisfy a discrete energy inequality that will provide the necessary a-priori bounds.

\begin{lemma}\label{NSCH:Lemma:DiscrEnergyInequality}
    Let $(\bv_k,\bw_k)\in\mathbfcal{L}^2_\Div$ and $(\phi_k,\psi_k)\in\mathcal{H}^2$ be given, and set $\rho_k = \rho(\phi_k)$, $\sigma_k = \sigma(\psi_k)$. Then, a solution $(\bv,\bw,\phi,\psi,\mu,\theta)\in\mathbfcal{H}^1_{0,\Div}\times\mathrm{dom}\,(\partial \widetilde E_\free)\times\mathcal{H}^2_{L,\beta}$ to the time-discrete system \eqref{NSCH:VW:Discr}-\eqref{NSCH:Mu:Discr:Bc} satisfies the following energy inequality
    \begin{align}\label{NSCH:EnergyEstimate:Discr}
        &E_{\mathrm{tot}}(\bv,\bw,\phi,\psi) + \intO \frac12\rho_k\abs{\bv - \bv_k}^2\dx + \intG \frac12\sigma_k\abs{\bw - \bw_k}^2\dG + \intO \frac12\abs{\Grad\phi - \Grad\phi_k}^2\dx \nonumber \\
        &\qquad + \intG \frac12\abs{\Gradg\psi - \Gradg\psi_k}^2\dG + \chi(K)\intG \frac12\big(\alpha(\psi - \psi_k) - (\phi - \phi_k)\big)^2\dG \\
        &\qquad + h\intO 2\nu_\Om(\phi_k)\abs{\D\bv}^2\dx + h\intG 2\nu_\Ga(\psi_k)\abs{\Dg\bw}^2\dG + h\intG\gamma(\phi_k,\psi_k)\abs{\bw}^2\dG \nonumber \\
        &\qquad + h\intO m_\Om(\phi_k)\abs{\Grad\mu}^2\dx + h\intG m_\Ga(\psi_k)\abs{\Gradg\theta}^2\dG + h\chi(L)\intG (\beta\theta - \mu)^2\dG \nonumber \\
        &\leq E_{\mathrm{tot}}(\bv_k,\bw_k,\phi_k,\psi_k). \nonumber
    \end{align}
\end{lemma}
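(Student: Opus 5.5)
We test the discrete momentum equation with the new velocity, test the discrete Cahn--Hilliard equations with the new chemical potentials, and add the results. The key point is that the coupling terms $\mu\Grad\phi$ and $\theta\Gradg\psi$ cancel against the convective terms $\bv\cdot\Grad\phi_k$ and $\bw\cdot\Gradg\psi_k$. Concretely, we test \eqref{NSCH:VW:Discr:Equiv} with $(\wv,\ww)=(\bv,\bw)$. This test pair is a priori only in $\mathbfcal{H}^1_{0,\Div}$, so we first justify it by density of $\mathbfcal{H}^2_{0,\Div}$ in $\mathbfcal{H}^1_{0,\Div}$. All terms in \eqref{NSCH:VW:Discr:Equiv} are continuous in $(\wv,\ww)$ with respect to the $\mathbfcal{H}^1$ norm, because $(\mu,\theta)\in\mathcal{H}^2$ and $(\phi,\psi)\in\mathcal{W}^{2,p}$ by Lemma~\ref{NSCH:Lemma:H^2+Subdiff}.

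\textbf{Kinetic part.} We use the elementary identity
\begin{align*}
(\rho\bv-\rho_k\bv_k)\cdot\bv = \tfrac12\rho\abs{\bv}^2-\tfrac12\rho_k\abs{\bv_k}^2+\tfrac12\rho_k\abs{\bv-\bv_k}^2+\tfrac12(\rho-\rho_k)\abs{\bv}^2 .
\end{align*}
The last summand is cancelled by the term $-\frac12\frac{\rho-\rho_k}{h}\abs{\bv}^2$ in \eqref{NSCH:VW:Discr:Equiv}. Next we integrate by parts in
\[
\big(\Div(\rho_k\bv\otimes\bv),\bv\big)=\intO (\bv\cdot\Grad\rho_k)\abs{\bv}^2+\rho_k(\bv\cdot\Grad)\bv\cdot\bv\dx ,
\]
using $\Div\bv=0$ and $\bv\cdot\n=0$. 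This gives $\frac12\intO(\bv\cdot\Grad\rho_k)\abs{\bv}^2\dx$, which cancels with $-\frac12(\bv\cdot\Grad\rho_k)\abs{\bv}^2$. For the remaining flux term, $\frac12\intO \Div\J\,\abs{\bv}^2\dx$ equals $-\intO(\J\cdot\Grad)\bv\cdot\bv\dx+\frac12\intG(\J\cdot\n)\abs{\bw}^2\dG$. Combined with the original form $\big(\Div(\bv\otimes\J),\bv\big)$ and the boundary term on the right-hand side, all contributions involving $\J$ cancel.

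On the surface the computation is analogous, using $\Divg\bw=0$ and the fact that $\Ga$ is closed. The exception is that $\K$ enters via $\Divg\K-\J_\Ga\cdot\n$. The $\J_\Ga$ contribution is exactly balanced by the term $\frac12((\J_\Ga\cdot\n)\bw,\ww)$ kept in \eqref{NSCH:VW:Discr}. I expect this bookkeeping of boundary flux terms, i.e.\ checking that the $\J\cdot\n$ and $\J_\Ga\cdot\n$ terms cancel exactly rather than up to sign, to be the main obstacle, and I would carry it out carefully in the form \eqref{NSCH:VW:Discr}. This yields
\begin{align*}
&E_{\mathrm{kin}}(\bv,\bw,\phi,\psi)-E_{\mathrm{kin}}(\bv_k,\bw_k,\phi_k,\psi_k)+\intO\tfrac12\rho_k\abs{\bv-\bv_k}^2\dx+\intG\tfrac12\sigma_k\abs{\bw-\bw_k}^2\dG\\
&\qquad+h(\text{viscous and friction terms})
= h\big(\mu\Grad\phi,\bv\big)+h\big(\theta\Gradg\psi,\bw\big).
\end{align*}

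\textbf{Free-energy part.} We multiply \eqref{NSCH:Phi:Discr} by $h\mu$ and \eqref{NSCH:Psi:Discr} by $h\theta$, integrate, and add. Using \eqref{NSCH:Mu:Discr:Bc} together with the definition of $\mathcal{H}^2_{L,\beta}$, the diffusion terms produce the dissipation $h\intO m_\Om(\phi_k)\abs{\Grad\mu}^2\dx+h\intG m_\Ga(\psi_k)\abs{\Gradg\theta}^2\dG+h\chi(L)\intG(\beta\theta-\mu)^2\dG$. In the case $L=0$, where $\mu=\beta\theta$ on $\Ga$, the terms $\mu\, m_\Om\deln\mu$ and $\beta\theta\, m_\Om\deln\mu$ cancel directly. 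The convective terms are $h\intO(\bv\cdot\Grad\phi_k)\mu\dx$ and $h\intG(\bw\cdot\Gradg\psi_k)\theta\dG$. They do not carry $\phi$ itself, so they do not cancel immediately against $h(\mu\Grad\phi,\bv)$. To fix this, I would check how the velocity coupling in \eqref{NSCH:VW:Discr} should be read. Consistently with Abels--Depner--Garcke, the coupling should be $\mu\Grad\phi_k$, or equivalently $-\phi_k\Grad\mu$ after integration by parts. I would use that reading so that the two contributions cancel exactly.

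\textbf{Convex-splitting step.} It remains to bound
\[
\intO\mu(\phi-\phi_k)\dx+\intG\theta(\psi-\psi_k)\dG .
\]
We substitute \eqref{NSCH:Mu:Discr} and \eqref{NSCH:Theta:Discr} and integrate by parts using \eqref{NSCH:Phi:Discr:Bc}. Since $\alpha\deln\phi(\psi-\psi_k)-\deln\phi(\phi-\phi_k)=\chi(K)(\alpha\psi-\phi)\big(\alpha(\psi-\psi_k)-(\phi-\phi_k)\big)$, the identity $a(a-b)=\frac12a^2-\frac12b^2+\frac12(a-b)^2$ produces the gradient and coupling energies together with the quadratic increment terms. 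For the potentials, convexity of $F_0$ gives $F_0'(\phi)(\phi-\phi_k)\ge F_0(\phi)-F_0(\phi_k)$. The concave part is treated by the exact identity $-\frac{c_F}{2}(\phi+\phi_k)(\phi-\phi_k)=-\frac{c_F}{2}(\phi^2-\phi_k^2)$, and analogously for $G$. Summing the kinetic and free-energy parts yields \eqref{NSCH:EnergyEstimate:Discr}.
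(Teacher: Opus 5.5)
Your proposal is correct and is essentially the paper's proof. You test the momentum equation with $(\bv,\bw)$ using the Abels--Depner--Garcke cancellation identities, test \eqref{NSCH:Phi:Discr} and \eqref{NSCH:Psi:Discr} with $\mu,\theta$, test \eqref{NSCH:Mu:Discr} and \eqref{NSCH:Theta:Discr} with $\frac1h(\phi-\phi_k)$ and $\frac1h(\psi-\psi_k)$, use convexity of $F_0,G_0$ plus the exact identity for the concave parts, and sum. Reading the coupling term as $\mu\Grad\phi_k$ and the boundary condition \eqref{NSCH:Mu:Discr:Bc} as involving $\deln\mu$ is what the paper's proof uses implicitly, so the convective terms cancel as you describe.
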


\begin{proof}
    The idea of this proof is to use suitable test functions for the time-discrete system.  However, before testing \eqref{NSCH:VW:Discr:Equiv} with $(\bv,\bw)$, we recall the identities
    \begin{align*}
        &\intO \Big(\Div(\rho_k\bv\otimes\bv) - \frac12(\bv\cdot\Grad\rho_k)\bv\Big)\cdot\bv\dx = \intO \frac12\Div\big(\rho_k\bv\abs{\bv}^2\big)\dx = 0, \\
        &\intO\Big(\frac12(\Div\,\J)\bv + (\J\cdot\Grad)\bv\Big)\cdot\bv \dx = \intO \frac12\Div\big(\J\abs{\bv}^2\big)\dx = \frac12\intG (\J\cdot\n)\abs{\bv}^2\dG,
    \end{align*}
    which have been established in \cite[Lemma 4.3]{Abels2013}. Similarly, we derive
    \begin{align*}
        &\intG \Big(\Divg(\sigma\bw\otimes\bw) - \frac12(\bv\cdot\Gradg\sigma_k)\bw\Big)\cdot\bv\dG = \intG \frac12\Divg\big(\sigma_k\bw\abs{\bw}^2\big)\dG = 0, \\
        &\intG \Big(\frac12(\Divg\,\K)\bw + (\K\cdot\Gradg)\bw\Big)\cdot\bw\dG = \intG \frac12\Divg\big(\K\abs{\bw}^2\big)\dG = 0.
    \end{align*}
    Furthermore, the algebraic identity
    \begin{align*}
        \mathbf{a}\cdot(\mathbf{a}-\mathbf{b}) = \frac{\abs{\mathbf{a}}^2}{2} - \frac{\abs{\mathbf{b}}^2}{2} + \frac{\abs{\mathbf{a} - \mathbf{b}}^2}{2} \qquad\text{for~}\mathbf{a},\mathbf{b}\in\R^d
    \end{align*}
    yields that
    \begin{align*}
        \frac1h(\rho\bv - \rho_k\bv_k)\cdot\bv &= \frac{1}{2h}\Big(\rho\abs{\bv}^2 - \rho_k\abs{\bv_k}^2\Big) + \frac{1}{2h} (\rho - \rho_k)\abs{\bv}^2 + \frac{1}{2h}\rho_k\abs{\bv - \bv_k}^2, \\
        \frac1h(\sigma\bv - \sigma_k\bw_k)\cdot\bw &= \frac{1}{2h}\Big(\rho\abs{\bw}^2 - \sigma_k\abs{\bw_k}^2\Big) + \frac{1}{2h}(\sigma - \sigma_k)\abs{\bw}^2 + \frac{1}{2h} \sigma_k\abs{\bw - \bw_k}^2.
    \end{align*}
    Now, testing \eqref{NSCH:VW:Discr:Equiv} with $(\bv,\bw)$ and using the equations above, leads us to 
    \begin{align*}
        0 &= \intO \frac{1}{2h}\Big(\rho{\abs{\bv}^2 - \rho_k\abs{\bv_k}}^2\Big)\dx + \intG \frac{1}{2h}\Big(\sigma\abs{\bw}^2 - \sigma_k\abs{\bw_k}^2\Big)\dG + \intO \frac{1}{2h} \rho_k\abs{\bv - \bv_k}^2\dx \nonumber \\
        &\quad + \intG \frac{1}{2h}\sigma_k\abs{\bw - \bw_k}^2\dG + \intO 2\nu_\Om(\phi_k)\abs{\D\bv}^2\dx + \intG 2\nu_\Ga(\psi_k)\abs{\Dg\bw}^2\dG \\
        &\quad + \intG \gamma(\phi_k,\psi_k)\abs{\bw}^2\dG - \intO \mu\Grad\phi_k\cdot\bv \dx - \intG \theta\Gradg\psi_k\cdot\bw \dG. \nonumber
    \end{align*}
    Next, taking $\mu$ and $\theta$ to test with \eqref{NSCH:Phi:Discr} and \eqref{NSCH:Psi:Discr}, respectively, further gives
    \begin{align*}
        0 &= \intO \frac{1}{h}\big(\phi - \phi_k\big)\mu\dx + \intG \frac{1}{h}\big(\psi - \psi_k\big)\theta\dG + \intO \bv\cdot\Grad\phi_k\mu\dx + \intG \bw\cdot\Gradg\psi_k\theta\dG \nonumber \\
        &\quad + \intO m_\Om(\phi_k)\abs{\Grad\mu}^2\dx + \intG m_\Ga(\psi_k)\abs{\Gradg\theta}^2 \dG + \chi(L)\intG (\beta\theta - \mu)^2\dG,
    \end{align*}
    while testing \eqref{NSCH:Mu:Discr} and \eqref{NSCH:Theta:Discr} with $\frac1h\big(\phi - \phi_k\big)$ and $\frac1h\big(\psi - \psi_k\big)$, respectively, results in
    \begin{align*}
        0 &= \frac1h\intO\Grad\phi\cdot(\Grad\phi - \Grad\phi_k)\dx + \frac1h\intG\Gradg\psi\cdot(\Gradg\psi - \Gradg\psi_k)\dG \nonumber \\
        &\quad + \frac1h\chi(K)\intG (\alpha\psi - \phi)\big(\alpha(\psi - \psi_k) - (\phi - \phi_k)\big)\dG + \frac1h\intO F_0^\prime(\phi)(\phi - \phi_k)\dx \nonumber \\
        &\quad + \frac1h\intG G_0^\prime(\psi)(\psi - \psi_k)\dG - \frac1h \intO \mu(\phi - \phi_k)\dx - \frac1h\intG \theta(\psi - \psi_k)\dG \\
        &\quad - \frac1h\intO\frac{c_F}{2}\big(\phi^2 - \phi_k^2\big)\dx - \frac1h\intG \frac{c_G}{2}\big(\psi^2 - \psi_k^2\big)\dG. \nonumber 
    \end{align*}
    Taking into account the convexity of $F_0$ and $G_0$ to estimate
    \begin{align*}
        F_0^\prime(\phi)(\phi - \phi_k) \geq F_0(\phi) - F_0(\phi_k), \qquad G_0^\prime(\psi)(\psi - \psi_k) \geq G_0(\psi) - G_0(\psi_k),
    \end{align*}
    we sum these identities and deduce
    \begin{align*}
        0 &\geq \frac1h\intO \frac12\Big(\rho{\abs{\bv}^2 - \rho_k\abs{\bv_k}}^2\Big)\dx + \frac1h\intG \frac12\Big(\sigma\abs{\bw}^2 - \sigma_k\abs{\bw_k}^2\Big)\dG \nonumber \\
        &\quad + \frac1h\intO \frac12 \rho_k\abs{\bv - \bv_k}^2\dx + \frac1h\intG \frac12\sigma_k\abs{\bw - \bw_k}^2\dG + \intO 2\nu_\Om(\phi_k)\abs{\D\bv}^2\dx \nonumber \\
        &\quad + \intG 2\nu_\Ga(\psi_k)\abs{\Dg\bw}^2\dG + \intG \gamma(\phi_k,\psi_k)\abs{\bw}^2\dG + \intO m_\Om(\phi_k)\abs{\Grad\mu}^2\dx \\
        &\quad + \intG m_\Ga(\psi_k)\abs{\Gradg\theta}^2\dG + \chi(L)\intG(\beta\theta - \mu)^2\dG + \frac1h\intO F_0(\phi) - F_0(\phi_k)\dx \nonumber \\
        &\quad - \frac1h\intO \frac{c_F}{2}\big(\phi^2 - \phi_k^2\big)\dx + \frac1h\intG G_0(\psi) - G_0(\psi_k)\dG - \frac1h\intG\frac{c_G}{2}\big(\psi^2 - \psi_k^2\big)\dG \nonumber \\
        &\quad + \frac1h \intO \frac12\abs{\Grad\phi - \Grad\phi_k}^2\dx + \frac1h\intG\frac12\abs{\Gradg\psi - \Gradg\psi_k}^2\dG \nonumber \\
        &\quad + \frac1h\chi(K)\intG\frac12\big(\alpha(\psi - \psi_k) - (\phi - \phi_k)\big)^2\dG + \frac1h\intO\frac12\abs{\Grad\phi}^2 - \frac12\abs{\Grad\phi_k}^2\dx \nonumber \\
        &\quad + \frac1h\intG\frac12\abs{\Gradg\psi}^2 - \frac12\abs{\Gradg\psi_k}^2\dG + \frac1h\chi(K)\intG\frac12(\alpha\psi - \phi)^2 - \frac12(\alpha\psi_k - \phi_k)^2\dG. \nonumber
    \end{align*}
    By definition of $E_{\mathrm{tot}}(\bv,\bw,\phi,\psi)$, we have therefore established the time-discrete energy inequality \eqref{NSCH:EnergyEstimate:Discr} as claimed.
\end{proof}

In the next step, we show that the time-discrete system \eqref{NSCH:VW:Discr}-\eqref{NSCH:Mu:Discr:Bc} admits at least one solution.

\begin{lemma}\label{NSCH:Lemma:Existence:Discrete}
    Let $(\bv_k,\bw_k)\in\mathbfcal{L}^2_\Div$ and $(\phi_k,\psi_k)\in\mathcal{H}^2_{K,\alpha}$ be given, and set $(\rho_k,\sigma_k) = (\rho(\phi_k,),\sigma(\psi_k))$. Then, there exists a solution $(\bv,\bw,\phi,\psi,\mu,\theta)\in\mathbfcal{H}^1_{0,\Div}\times\mathrm{dom}\,(\partial \widetilde E_\free)\times\mathcal{H}^2_{L,\beta}$ to the time-discrete system \eqref{NSCH:VW:Discr}-\eqref{NSCH:Mu:Discr:Bc}.
\end{lemma}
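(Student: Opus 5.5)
We will follow the strategy of \cite[Lemma~4.4]{Abels2013} and solve the time-discrete system with the Leray--Schauder fixed-point principle. The key reformulation is to treat the chemical potentials $(\mu,\theta)$ as the primary unknowns and recover $(\phi,\psi)$ through the subdifferential. Fix $w:=(\bv,\bw,\mu,\theta)$ in the Hilbert space
\begin{align*}
X\coloneqq\mathbfcal{H}^1_{0,\Div}\times\mathcal{H}^1_{L,\beta}.
\end{align*}
Given $(\mu,\theta)$, we would like $(\phi,\psi)$ to satisfy
\begin{align*}
\partial\widetilde E_\free^K(\phi,\psi)=\big(\mu+\tfrac{c_F}{2}(\phi+\phi_k),\,\theta+\tfrac{c_G}{2}(\psi+\psi_k)\big).
\end{align*}
This is not an explicit map, because $\phi$ appears on both sides. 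Since $\partial\widetilde E_\free^K$ is maximal monotone (Proposition~\ref{App:Proposition:Subdiff}), the operator $\partial\widetilde E_\free^K-\frac{c_F}{2}I$ is not monotone in general. To restore monotonicity we plan to move the $\phi$-dependence into a linear, well-posed part, following Abels--Depner--Garcke.

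Concretely, we write the discrete system as an operator equation
\begin{align*}
\mathcal{L}_k(w)-\mathcal{F}_k(w)=0 \qquad\text{in } Y\coloneqq X'.
\end{align*}
The linear part $\mathcal{L}_k$ consists of three pieces:
\begin{enumerate}[label=(\roman*)]
\item the Stokes-type form $(2\nu_\Om(\phi_k)\D\bv,\D\wv)+(2\nu_\Ga(\psi_k)\Dg\bw,\Dg\ww)+(\gamma\bw,\ww)$, which is coercive by the bulk-surface Korn inequality;
\item the weighted bulk-surface Laplacian $(\mu,\theta)\mapsto$ the weak form of $-\Div(m_\Om(\phi_k)\Grad\mu)$, $-\Divg(m_\Ga(\psi_k)\Gradg\theta)+\beta m_\Om(\phi_k)\deln\mu$ together with the $\chi(L)$ coupling term;
\item the mean-value term $\mean{\mu}{\theta}$, which makes (ii) invertible by the Poincar\'e inequality of Lemma~\ref{Lemma:Poincare}. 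If $L=\infty$, the two separate means are used instead.
\end{enumerate}
With these pieces, $\mathcal{L}_k:X\to Y$ is an isomorphism by Lax--Milgram.

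The nonlinear part $\mathcal{F}_k$ collects everything else. It contains the convection terms, the time differences $(\phi-\phi_k)/h$ and $(\psi-\psi_k)/h$, the Korteweg terms $\mu\Grad\phi$ and $\theta\Gradg\psi$, the $\J$, $\K$, $\J_\Ga$ terms, and the mean term. In $\mathcal{F}_k$, the phase-fields $(\phi,\psi)=\mathcal{P}(\mu,\theta)$ are defined as the unique solution of the strictly monotone problem
\begin{align*}
\partial\widetilde E_\free^K(\phi,\psi)+\tfrac{c_F}{2}\cdot 0=\big(\mu+\tfrac{c_F}{2}(\phi+\phi_k),\dots\big).
\end{align*}
To make this well defined, we will actually solve
\begin{align*}
\big(\partial\widetilde E_\free^K-\tfrac{c}{2}I\big)(\phi,\psi)=(\mu,\theta)+\tfrac{c}{2}(\phi_k,\psi_k),
\end{align*}
which is strongly monotone because $\widetilde E_\free^K$ minus the quadratic part still contains $F_0''\ge c_F$. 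More precisely, $\widetilde E_\free^K-\frac{c_F}{2}\|u\|^2-\frac{c_G}{2}\|v\|^2$ is convex, since $F_0-\tfrac{c_F}{2}s^2$ and $G_0-\tfrac{c_G}{2}s^2$ are convex. Coercivity is checked on the mean-value-constrained space, and existence then follows by minimizing the corresponding strictly convex functional. Lemma~\ref{NSCH:Lemma:H^2+Subdiff} then gives $(\phi,\psi)\in\mathcal{W}^{2,p}$, and continuity of $\mathcal{P}:\mathcal{H}^1\to\mathcal{W}^{2,p-\epsilon}$ follows from the monotonicity estimate combined with these bounds and compactness.

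Using the regularity from Lemma~\ref{NSCH:Lemma:H^2+Subdiff} and compact Sobolev embeddings (valid for $d\le3$), we will show that $\mathcal{F}_k:X\to Y$ is continuous and compact. The mean-value condition on $(\phi,\psi)$ has to be carried along in this step. The problem is then equivalent to the fixed-point equation
\begin{align*}
w=\mathcal{L}_k^{-1}\mathcal{F}_k(w),
\end{align*}
to which we apply the Leray--Schauder theorem. It remains to show that all solutions of $w=\lambda\mathcal{L}_k^{-1}\mathcal{F}_k(w)$, $\lambda\in[0,1]$, are bounded in $X$ uniformly in $\lambda$.

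\textbf{Main obstacle.} The main obstacle is the uniform a priori bound. We intend to repeat the testing of Lemma~\ref{NSCH:Lemma:DiscrEnergyInequality} with $\lambda$ inserted: test the momentum equation with $(\bv,\bw)$, the phase equations with $(\mu,\theta)$, and the potential relations with $(\phi-\phi_k,\psi-\psi_k)/h$. The $\lambda$-scaling must be arranged so that the Korteweg and transport terms still cancel; as in Abels--Depner--Garcke, this means placing $\lambda$ consistently on the cross terms. The result is a bound on $\|\D\bv\|$, $\|\Dg\bw\|$, $\|\bw\|$ and $\|(\mu,\theta)\|_{L,\beta}$ in terms of $E_\tot(\bv_k,\bw_k,\phi_k,\psi_k)$. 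Combining this with the Korn inequality and the estimate of Lemma~\ref{NSCH:Lemma:H^2+Subdiff} for the mean $\mean{\mu}{\theta}$ yields the bound in $X$.

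Two delicate points remain. First, for $L=0$ the boundary terms $\frac12(\J\cdot\n)\bw$ and $\frac12(\J_\Ga\cdot\n)\bw$ must cancel, which is exactly what condition \eqref{Dens:Cond} guarantees. Second, the mean-value hypothesis of Lemma~\ref{NSCH:Lemma:H^2+Subdiff} must hold at every $\lambda$, which will be ensured by building the mass constraint into $\mathcal{P}$. With the uniform bound in hand, Leray--Schauder yields a fixed point, and $(\mu,\theta)\in\mathcal{H}^2_{L,\beta}$ then follows from elliptic regularity.
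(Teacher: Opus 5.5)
Your idea of eliminating $(\phi,\psi)$ through the strongly monotone map $\mathcal{P}$ is a legitimate alternative to the paper's route. The paper, like \cite{Abels2013}, keeps $(\phi,\psi)$ among the unknowns and inverts the resolvent $I+\partial\widetilde E_\free$. Your claim that $\partial\widetilde E_\free^K-\frac c2 I$ is strongly monotone is also correct. However, your handling of the mass constraint does not work.

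The equation $(\partial\widetilde E_\free^K-\frac c2 I)(\phi,\psi)=(\mu,\theta)+\frac c2(\phi_k,\psi_k)$ already determines $(\phi,\psi)$ uniquely, including its generalized mean, so the constraint cannot also be ``built into $\mathcal{P}$''. Suppose you minimize over the constraint set instead. Then the Euler--Lagrange equation carries a multiplier: $(\phi,\psi)$ satisfies the relation with $(\mu+\beta\Lambda,\theta+\Lambda)$ in place of $(\mu,\theta)$. Consequently $\mathcal{P}$ is invariant under $(\mu,\theta)\mapsto(\mu,\theta)+t(\beta,1)$. Your mean-value terms cancel at $\lambda=1$, and every other term is invariant under this shift: gradients, $\beta\theta-\mu$, and $\int_\Om\Grad\phi\cdot\wv\dx=0$ for solenoidal $\wv$ with $\wv\cdot\n=0$ (likewise on $\Ga$). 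So the Leray--Schauder set at $\lambda=1$ is unbounded as soon as it is nonempty, and the a priori bound you need cannot hold. Correspondingly, Lemma~\ref{NSCH:Lemma:H^2+Subdiff} only bounds $\mean{\mu}{\theta}+\Lambda$, not $\mean{\mu}{\theta}$.

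If you keep $\mathcal{P}$ unconstrained, the problem moves elsewhere. Testing the $(\mu,\theta)$-equation with $(\beta,1)$ shows that for $\lambda<1$ the mass changes by a multiple of $\frac{(1-\lambda)h}{\lambda}\mean{\mu}{\theta}$. Then hypothesis \eqref{NSCH:MCL:Discrete} of Lemma~\ref{NSCH:Lemma:H^2+Subdiff} fails, and its mean estimate is not available as you quote it. Both variants can be repaired:
\begin{itemize}
\item Unconstrained: the term $(1-\lambda)\abs{\mean{\mu}{\theta}}^2$ from your energy test makes the mass defect small for $\lambda$ near $1$, so the Miranville--Zelik argument applies there. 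For $\lambda$ away from $1$, the same term bounds the mean directly.
\item Constrained: drop the mean terms, work with mean-free $(\mu,\theta)$, and recover the multiplier at the end.
\end{itemize}
As written, the proposal does neither.

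The second gap is the functional setting. With $(\mu,\theta)\in\mathcal{H}^1_{L,\beta}$ you only have $\J,\K\in\mathbf L^2$. Then $\wv\mapsto\int_\Om(\bv\otimes\J):\Grad\wv\dx$ (equivalently the terms $(\J\cdot\Grad)\bv$ and $(\Div\,\J)\bv$) is not bounded on $\mathbfcal{H}^1_{0,\Div}$, and neither is its surface analogue. Already in the bulk, H\"older would require $\frac16+\frac12+\frac12\le1$. Moreover, $\frac12(\J\cdot\n)\bw$ involves $\deln\mu$, which has no trace for $\mu\in H^1(\Om)$. This is why you needed \eqref{Dens:Cond} for $L=0$, a hypothesis the lemma does not make.

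So $\mathcal{F}_k$ does not map $X$ into $X'$. Obtaining $\mathcal{H}^2$-regularity of $(\mu,\theta)$ only at the fixed point comes too late, because the compact map must be defined on the whole space. The paper avoids this as follows:
\begin{itemize}
\item it takes $(\mu,\theta)\in\mathcal{H}^2_{L,\beta}$ and uses the strong-form operator $\mathfrak{S}_{L,\beta}:\mathcal{H}^2_{L,\beta}\to\mathcal{L}^2$ from Lemma~\ref{NSCH:Proposition:Elliptic:MuTheta};
\item it bounds the convective and $\J$-terms in $\mathbfcal{L}^{3/2}$ by quantities like $\norm{\bv}_{\mathbf H^1(\Om)}\norm{\mu}_{H^2(\Om)}$;
\item it obtains compactness from $\mathbfcal{L}^{3/2}\times\mathcal{H}^1\times\mathcal{W}^{1,3/2}\emb(\mathbfcal{H}^1_{0,\Div})'\times\mathcal{L}^2\times\mathcal{L}^2$.
\end{itemize}
In that setting the boundary terms $\frac12(\J\cdot\n)\bw$ and $\frac12(\J_\Ga\cdot\n)\bw$ are well defined. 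They are absorbed by the energy identity, via $\frac12\Div(\J\abs{\bv}^2)$ and the discrete surface mass balance, so no density compatibility condition is needed at the discrete level.
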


\begin{proof}
    The central concept of this proof is based on the application of the Leray--Schauder principle. To this end, we define two nonlinear operators $\mathcal{M}_k,\mathcal{F}_k:X_{K,L}\rightarrow Y$, where
    \begin{align*}
        X_{K,L} \coloneqq \mathbfcal{H}^1_{0,\Div}\times\mathrm{dom}\,(\partial \widetilde E_\free)\times\mathcal{H}^2_{L,\beta}, \qquad Y \coloneqq (\mathbfcal{H}^1_{0,\Div})^\prime\times\mathcal{L}^2\times\mathcal{L}^2.
    \end{align*}
    These are constructed in a way such that a sextuple $\mathbf{z} = (\bv,\bw,\phi,\psi,\mu,\theta)\in X_{K,L}$ solves the system \eqref{NSCH:VW:Discr}-\eqref{NSCH:Mu:Discr:Bc} if and only if the identity
    \begin{align*}
        \mathcal{M}_k(\mathbf{z}) = \mathcal{F}_k(\mathbf{z})
    \end{align*}
    holds. For $\mathbf{z} = (\bv,\bw,\phi,\psi,\mu,\theta)\in X_{K,L}$, we define
    \begin{align*}
        \mathcal{M}_k(\mathbf{z}) \coloneqq \begin{pmatrix}
            \mathcal{T}_k(\bv,\bw) \\
            \begin{pmatrix}
                \phi \\
                \psi
            \end{pmatrix}
            + \del \widetilde E_\free(\phi,\psi) \\
            \mathfrak{S}_{L,\beta}\begin{pmatrix}
                \mu \\
                \theta
            \end{pmatrix}
        \end{pmatrix},
    \end{align*}
	where
    \begin{align*}
        \big\langle\mathcal{T}_k(\bv,\bw),(\wv,\ww)\big\rangle &\coloneqq \intO 2\nu_\Om(\phi_k)\D\bv:\D\wv\dx + \intG 2\nu_\Ga(\psi_k)\Dg\bw:\Dg\ww\dG \\
        &\quad + \intG \gamma(\phi_k,\psi_k)\bw\cdot\ww\dG
    \end{align*}
    for all $(\bv,\bw), (\wv,\ww)\in\mathbfcal{H}^1_{0,\Div}$, and 
    \begin{align*}
        \mathfrak{S}_{L,\beta}\begin{pmatrix}
            \mu \\ \theta
        \end{pmatrix}
        \coloneqq
        \begin{pmatrix}
            -\Div(m_\Om(\phi_k)\Grad\mu) + \intO\mu\dx \\
            -\Divg(m_\Ga(\psi_k)\Gradg\theta) + \beta m_\Om(\phi_k)\deln\mu + \intG \theta\dG
        \end{pmatrix}
    \end{align*}
    for all $(\mu,\theta)\in\mathcal{H}^2_{L,\beta}$. Next, for any $\mathbf{z} = (\bv,\bw,\phi,\psi,\mu,\theta)\in X_{K,L}$, we define
    \begin{align*}
        \mathcal{F}_k(\mathbf{z}) \coloneqq \begin{pmatrix}
            \mathbfcal{S} \\
            \phi + \mu + \frac{c_F}{2}(\phi + \phi_k) \\
            \psi + \theta + \frac{c_G}{2}(\psi + \psi_k) \\
            -\frac{\phi - \phi_k}{h} - \bv\cdot\Grad\phi_k + \intO \mu\dx \\
            -\frac{\psi - \psi_k}{h} - \bw\cdot\Gradg\psi_k + \intG \theta\dG
        \end{pmatrix},
    \end{align*}
    where $\mathbfcal{S}$ in the first line of $\mathcal{F}_k$ is defined as
    \begin{align*}
    	\mathbfcal{S} &\coloneqq -\frac{\rho\bv-\rho_k\bv_k}{h} - \frac{\sigma\bw-\sigma_k\bw_k}{h} - \Div(\rho_k\bv\otimes\bv) - \Divg(\sigma_k\bw\otimes\bw) - (\J\cdot\Grad)\bv \\
    	&\quad - (\K\cdot\Gradg)\bw - \frac12\Big(\Div\,\J - \frac{\rho-\rho_k}{2} - \bv\cdot\Grad\rho_k\Big)\bv \\
    	&\quad - \frac12\Big(\Divg\,\K - \frac{\sigma-\sigma_k}{2} - \bw\cdot\Gradg\sigma_k\Big)\bw + \mu\Grad\phi + \theta\Gradg\psi + \frac12(\J\cdot\n)\bw.
    \end{align*}
    Using a similar argument as in \cite[Section~3.1]{Gal2019}, one can show that $\mathcal{T}_k:\mathbfcal{H}^1_{0,\Div}\rightarrow(\mathbfcal{H}^1_{0,\Div})^\prime$ is invertible with continuous inverse $\mathcal{T}_k^{-1}$ (see also \cite[Lemma~3.1]{Gal2024}). Moreover, from Lemma~\ref{NSCH:Proposition:Elliptic:MuTheta}, we infer that $\mathfrak{S}_{L,\beta}:\mathcal{H}_{L,\beta}^2\rightarrow\mathcal{L}^2$ is invertible with continuous inverse $\mathfrak{S}_{L,\beta}^{-1}$. In what follows, we examine the second line of the $\mathcal{M}_k$. Since $\partial\widetilde E_\free$ is a maximal monotone operator according to Proposition~\ref{App:Proposition:Subdiff}, the operator
    \begin{align*}
        I + \partial \widetilde E_\free:\mathrm{dom}\,(\partial \widetilde E_\free)\rightarrow\mathcal{L}^2
    \end{align*}
    is invertible. The inverse operator, considered as a mapping
    \begin{align*}
        (I + \partial \widetilde E_\free)^{-1}:\mathcal{L}^2\rightarrow\mathcal{H}^{2-s}, \qquad s\in(0,\tfrac14),
    \end{align*}
    is continuous and compact. Indeed, let $(f_l,f_{\Ga l})\rightarrow (f,f_\Ga)$ strongly in $\mathcal{L}^2$ as $l\rightarrow\infty$ with $(f_l,f_{\Ga l}) = (u_l,v_l) + \partial \widetilde E_\free(u_l,v_l)$ for $l\in\N$ and $(f,f_\Ga) = (u,v) + \partial \widetilde E_\free(u,v)$ be given. Then we have $(u_l,v_l)\rightarrow (u,v)$ strongly in $\mathcal{H}^1$ as $l\rightarrow\infty$ since
    \begin{align}\label{NSCH:u_lv_l:Conv:L2}
        &\norm{(u_l - u, v_l - v)}_{\mathcal{L}^2}^2 + \norm{(u_l - u, v_l - v)}_{K,\alpha}^2 \nonumber \\
        &\quad\leq \norm{(u_l - u, v_l - v)}_{\mathcal{L}^2}^2 + \bigscp{\partial \widetilde E_\free(u_l,v_l) - \partial \widetilde E_\free(u,v)}{(u,v) - (u_l,v_l)}_{\mathcal{L}^2} \nonumber \\
        &\quad\leq \norm{(u_l,v_l) + \partial \widetilde E_\free(u_l,v_l) - ((u,v) + \partial \widetilde E_\free(u,v))}_{\mathcal{L}^2}\norm{(u,v) - (u_l,v_l)}_{\mathcal{L}^2} \nonumber \\
        &\quad\leq \frac12\norm{(f_l,f_{\Ga l}) - (f,f_\Ga)}_{\mathcal{L}^2}^2 + \frac12\norm{(u_l,v_l) - (u,v)}_{\mathcal{L}^2}^2
    \end{align}
    for all $l\in\N$. Moreover, exploiting again the regularity theory for system with bulk-surface coupling and logarithmic nonlinearity from \cite[Proposition~5.1]{Giorgini2026}, we have
    \begin{align}\label{NSCH:u_lv_l:Est:H2}
    	\norm{(u_l,v_l)}_{\mathcal{H}^2} \leq C\big(1 + \norm{\del\widetilde E_\free(u_l,v_l)}_{\mathcal{L}^2}\big) \leq C\big(\norm{(u_l,v_l)}_{\mathcal{L}^2} + \norm{(f_l,f_{\Ga l})}_{\mathcal{L}^2}\big)
    \end{align}
    for all $l\in\N$, and due to the strong convergence of $\{(f_l,f_{\Ga l})\}_{l\in\N}$ in $\mathcal{L}^2$ and the estimate \eqref{NSCH:u_lv_l:Conv:L2}, we immediately obtain that $\{(u_l,v_l)\}_{l\in\N}$ is bounded in $\mathcal{H}^2$. Thus, exploiting the compact embedding $\mathcal{H}^2\emb\mathcal{H}^{2-s}$ yields that $(u_l,v_l)\rightarrow (u,v)$ strongly in $\mathcal{H}^{2-s}$ as $l\rightarrow\infty$ for any $s\in(0,\frac14)$. 
    
    Altogether we obtain that $\mathcal{M}_k:X_{K,L}\rightarrow Y$ is invertible with inverse $\mathcal{M}_k^{-1}:Y\rightarrow X_{K,L}$. Note that $X_{K,L}$ is not a Banach space since $\mathrm{dom}\,(\partial \widetilde E_\free)$ includes inequality constraints.
    To get a continuous and even compact operator, for any $s\in(0,\frac14)$, we introduce the Banach spaces
    \begin{align*}
        \widetilde{X}_{K,L} \coloneqq \mathbfcal{H}^1_{0,\Div}\times\mathcal{H}^{2-s}\times\mathcal{H}^2_{L,\beta}, \ s\in\big(0,\tfrac14\big), \qquad  \widetilde{Y} \coloneqq \mathbfcal{L}^{\frac32}\times\mathcal{H}^1\times\mathcal{W}^{1,\frac32}.
    \end{align*}
    In this way, we have the continuous embedding $X_{K,L}\emb\widetilde{X}$ and the compact embedding $\widetilde{Y}\emb Y$. Eventually, $\mathcal{M}_k^{-1}:\widetilde{Y}\rightarrow\widetilde{X}_{K,L}$ is a compact operator. 
    
    We proceed by showing that $\mathcal{F}_k:\widetilde{X}_{K,L}\rightarrow\widetilde{Y}$ is continuous and maps bounded sets into bounded sets. In the following, we denote with $C_k$ constants that may depend on the parameters of the system and on $k\in\N$. Then, it was shown in \cite[(i)-(vi), pp. 467]{Abels2013} that the following estimates
    \begin{alignat*}{2}
        &\norm{\rho\bv}_{\mathbf{L}^{\frac32}(\Om)}\leq C\norm{\bv}_{\mathbf{H}^1(\Om)}\big(1+\norm{\phi}_{L^2(\Om)}\big), &&\qquad \norm{\Div(\rho_k\bv\otimes\bv)}_{\mathbf{L}^{\frac32}(\Om)} \leq C_k\norm{\bv}_{\mathbf{H}^1(\Om)}^2, \\
        &\norm{\mu\Grad\phi_k}_{\mathbf{L}^{\frac32}(\Om)}\leq C_k\norm{\mu}_{L^2(\Om)}, &&\qquad\norm{(\Div\J)\bv}_{\mathbf{L}^{\frac32}(\Om)} \leq C_k\norm{\bv}_{\mathbf{H}^1(\Om)}\norm{\mu}_{H^2(\Om)}, \\
        &\norm{(\J\cdot\Grad)\bv}_{\mathbf{L}^{\frac32}(\Om)} \leq C\norm{\bv}_{\mathbf{H}^1(\Om)}\norm{\mu}_{H^2(\Om)}, &&\qquad\norm{\bv\cdot\Grad\phi_k}_{W^{1,\frac32}(\Om)}\leq C_k\norm{\bv}_{\mathbf{H}^1(\Om)},
    \end{alignat*}
    hold. As $\Ga$ is a $(d-1)$-dimensional submanifold on $\R^d$, we can repeat the argument presented therein and show that analogous estimates hold for the corresponding boundary terms. Lastly, by the trace theorem, we have
    \begin{align*}
    	\norm{\tfrac12(\J\cdot\n)\bw}_{\mathbf{L}^{\frac32}(\Ga)} \leq C\norm{\mu}_{H^2(\Om)}\norm{\bw}_{\mathbf{H}^1(\Ga)}.
    \end{align*}
We thus deduce that
\begin{align}\label{F_k:bounded}
	\norm{\mathcal{F}_k(\mathbf z)}_{\widetilde Y} \leq C_k\Big(1 + \norm{\mathbf z}_{\widetilde X_{K,L}}^2\Big)
\end{align}    
for all $\mathbf z\in\widetilde X_{K,L}$. In particular, $\mathcal{F}_k:\widetilde{X}_{K,L}\rightarrow\widetilde{Y}$ maps bounded sets into bounded sets.
Now, we aim to apply the Leray--Schauer principle (see, e.g., \cite[Theorem 6.A]{Zeidler1986}). To this end, we notice that finding a solution $\mathbf{z}\in X_{K,L}$ to the time-discrete problem \eqref{NSCH:VW:Discr}-\eqref{NSCH:Mu:Discr:Bc}, i.e., a solution to $\mathcal{M}_k(\mathbf{z}) = \mathcal{F}_k(\mathbf{z})$, is equivalent to solving the equation
\begin{align*}
    \mathcal{K}_k(\mathbf{f}) = \mathbf{f}, \qquad\text{where~}\mathcal{K}_k \coloneqq\mathcal{F}_k\circ\mathcal{M}_k^{-1}, \ \mathbf{f} = \mathcal{M}_k(\mathbf{z}).
\end{align*}
Hence, our goal is to find a fixed point of $\mathcal{K}_k$ in $\widetilde{Y}$. It follows from the considerations above that the mapping $\mathcal{K}_k:\widetilde{Y}\rightarrow\widetilde{Y}$ is compact. To apply the Leray--Schauder principle, it remains to show that the set
\begin{align}\label{NSCH:LeraySchauder}
\big\{\f\in\widetilde Y \;:\; \f = \lambda\mathcal{K}_k(\f) \ \text{for some~} 0 \leq \lambda \leq 1\big\}
\end{align}
is bounded. So, let us consider $\mathbf{f}\in\widetilde{Y}$ and $0\leq\lambda\leq 1$ satisfying $\mathbf{f} = \lambda\mathcal{K}_k(\mathbf{f})$. Defining $\mathbf{z} \coloneqq \mathcal{M}_k^{-1}(\mathbf{f})\in X_{K,L}$ leads to the equation
\begin{align}\label{NSCH:Eq:LeraySchauder}
    \mathcal{M}_k(\mathbf{z}) = \lambda\mathcal{F}_k(\mathbf{z}),
\end{align}
which is equivalent to the weak formulation
\begin{align}\label{NSCH:WF:vw:lambda}
    &\intO 2\nu_\Om(\phi_k)\D\bv:\D\wv\dx + \intG 2\nu_\Ga(\psi_k)\Dg\bw:\Dg\ww\dG + \intG \gamma(\phi_k,\psi_k)\bw\cdot\ww\dG \nonumber \\
    &\qquad + \lambda\intO \frac{\rho\bv - \rho_k\bv_k}{h}\cdot\wv\dx + \lambda\intG \frac{\sigma\bw - \sigma_k\bw_k}{h}\cdot\ww\dG + \lambda\intO \Div(\rho_k\bv\otimes\bv)\cdot\wv\dx \nonumber \\
    &\qquad + \lambda\intG \Divg(\sigma_k\bw\otimes\bw)\cdot\ww\dG + \lambda\intO (\J\cdot\Grad)\bv\cdot\wv\dx + \lambda\intG (\K\cdot\Gradg)\bw\cdot\ww\dG \nonumber \\
    &\qquad + \lambda\intO \frac12\Big(\Div\,\J - \frac{\rho - \rho_k}{2} - \bv\cdot\Grad\rho_k\Big)\bv\cdot\wv\dx \\
    &\qquad + \lambda\intG \frac12\Big(\Divg\,\K - \frac{\sigma - \sigma_k}{2} - \bw\cdot\Gradg\sigma_k\Big)\bw\cdot\ww\dG \nonumber \\
    &\quad = \lambda\intO \mu\Grad\phi_k\cdot\wv \dx + \lambda\intG \theta\Gradg\psi_k\cdot\ww\dG + \lambda\intG \frac12(\J\cdot\n)\bw\cdot\ww\dG \nonumber
\end{align}
for all $(\wv,\ww)\in\mathbfcal{H}^1_{0,\Div}$, and the pointwise identities
\begin{subequations}\label{NSCH:WF:pp:mt:lambda}
\begin{alignat}{2}
        &\lambda\phi + \lambda\mu + \lambda \frac{c_F}{2}\big(\phi+\phi_k\big) = \phi -\Lap\phi + F_0^\prime(\phi_0) &&\qquad\text{a.e.~in~}\Om, \label{NSCH:WF:phi:lambda}\\
        &\lambda\psi + \lambda\theta + \lambda \frac{c_G}{2}\big(\psi+\psi_k\big) = \psi - \Lapg\psi + G_0^\prime(\psi_0) + \alpha\deln\phi &&\qquad\text{a.e.~on~}\Ga, \label{NSCH:WF:psi:lambda}\\
        &-\Div(m_\Om(\phi_k)\Grad\mu) + \intO \mu\dx && \nonumber \\
    	&\quad = -\lambda\frac{\phi-\phi_k}{h} - \lambda\bv\cdot\Grad\phi_k + \lambda\intO\mu\dx &&\qquad\text{a.e.~in~}\Om, \label{NSCH:WF:mu:lambda} \\
    	&-\Divg(m_\Ga(\psi_k)\Gradg\theta) + \beta m_\Om(\phi_k)\deln\mu + \intG \theta\dG &&  \nonumber \\
    	&\quad = -\lambda\frac{\psi-\psi_k}{h} - \lambda\bw\cdot\Gradg\psi_k + \lambda\intG\theta\dG &&\qquad\text{a.e.~on~}\Ga, \label{NSCH:WF:theta:lambda} \\
        &K\deln\phi = \alpha\psi - \phi, \qquad\qquad\;\,\, K\in(0,\infty], &&\qquad\text{a.e.~on~}\Ga, \label{NSCH:WF:phi:bc:lambda} \\
        &Lm_\Om(\phi_k)\deln\mu = \beta\theta - \mu, \qquad L\in[0,\infty], &&\qquad\text{a.e.~on~}\Ga. \label{NSCH:WF:mu:bc:lambda}
\end{alignat}
\end{subequations}
Proceeding analogously to the proof of Lemma~\ref{NSCH:Lemma:DiscrEnergyInequality}, we now test \eqref{NSCH:WF:vw:lambda} with $(\bv,\bw)$, \eqref{NSCH:WF:phi:lambda} with $\frac1h(\phi-\phi_k)$, \eqref{NSCH:WF:psi:lambda} with $\frac1h(\psi-\psi_k)$, \eqref{NSCH:WF:mu:lambda} with $\mu$, and \eqref{NSCH:WF:theta:lambda} with $\theta$. This yields 
\begin{align*}
    &\lambda\intO \frac{1}{2h}\big(\rho\abs{\bv}^2 - \rho_k\abs{\bv_k}^2\big)\dx + \lambda\intG \frac{1}{2h}\big(\sigma\abs{\bw}^2 - \sigma_k\abs{\bw_k}^2\big)\dG + \lambda\intO \frac{1}{2h}\rho_k\abs{\bv-\bv_k}^2\dx \nonumber \\
    &\qquad + \lambda\intG\frac{1}{2h}\sigma_k\abs{\bw-\bw_k}^2\dG + \intO 2\nu_\Om(\phi_k)\abs{\D\bv}^2\dx + \intG 2\nu_\Ga(\psi_k)\abs{\Dg\bw}^2\dG \nonumber \\
    &\qquad + \intG \gamma(\phi_k,\psi_k)\abs{\bw}^2\dG + \intO m_\Om(\phi_k)\abs{\Grad\mu}^2\dx + \intG m_\Ga(\psi_k)\abs{\Gradg\theta}^2\dG \nonumber \\
    &\qquad + \chi(L)\intG(\beta\theta-\mu)^2\dG + (1-\lambda)\frac1h\intO\phi(\phi-\phi_k)\dx + (1-\lambda)\frac1h\intG\psi(\psi-\psi_k)\dG \nonumber \\
    &\qquad + \frac1h\intO \Grad\phi\cdot(\Grad\phi-\Grad\phi_k)\dx + \frac1h\intG\Gradg\psi\cdot(\Gradg\psi-\Gradg\psi_k)\dG \\
    &\qquad + \frac1h\chi(K)\intG (\alpha\psi - \phi)\big(\alpha(\psi-\psi_k) - (\phi-\phi_k)\big)\dG + \frac1h\intO F_0^\prime(\phi)(\phi-\phi_k)\dx \nonumber \\
    &\qquad + (1-\lambda)\left(\intO\mu\dx\right)^2 + \frac1h\intG G_0^\prime(\psi)(\psi-\psi_k)\dG + (1-\lambda)\left(\intG\theta\dG\right)^2 \nonumber \\
    &\quad = \lambda \intO\frac{c_F}{2h} \big(\phi^2-\phi_k^2\big)\dx + \lambda\intG \frac{c_G}{2h}\big(\psi^2-\psi_k^2\big)\dG. \nonumber
\end{align*}
Next, note that due to $\mathbf{z} = (\bv,\bw,\phi,\psi,\mu,\theta) = \mathcal{M}_k^{-1}(\mathbf{f})\in X_{K,L}$, it holds that $(\phi,\psi)\in\mathrm{dom}\,(\partial \widetilde E_\free)$, and therefore $\abs{\phi}\leq 1$ a.e.~in $\Om$ as well as $\abs{\psi}\leq 1$ a.e.~on $\Ga$. Consequently, it holds that $\rho\geq 0$ a.e.~in $\Om$ and $\sigma\geq 0 $ a.e.~on $\Ga$. Furthermore, we recall the convexity of $F_0$ and $G_0$, respectively, as well as the identity
\begin{align*}
	\mathbf{a}\cdot(\mathbf{a} - \mathbf{b}) = \frac12\abs{\mathbf{a}}^2 - \frac12\abs{\mathbf{b}}^2 + \frac12\abs{\mathbf{a}+\mathbf{b}}^2 \qquad\text{for all~}\mathbf{a},\mathbf{b}\in\R^d.
\end{align*} 
Then, dropping any non-essential non-negative term on the left-hand side, we deduce the following inequality
\begin{align}
    &h\intO 2\nu_\Om(\phi_k)\abs{\D\bv}^2\dx + h\intG 2\nu_\Ga(\psi_k)\abs{\Dg\bw}^2\dG + h\intG \gamma(\phi_k,\psi_k)\abs{\bw}^2\dG \nonumber \\
    &\qquad + h\intO m_\Om(\phi_k)\abs{\Grad\mu}^2\dx + h\intG m_\Ga(\psi_k)\abs{\Gradg\theta}^2\dG + h\chi(L)\intG (\beta\theta-\mu)^2\dG \nonumber \\
    &\qquad + \intO \frac12\abs{\Grad\phi}^2\dx + \intG \frac12\abs{\Gradg\psi}^2\dG + \chi(K)\intG \frac12 (\alpha\psi-\phi)^2\dG \nonumber \\
    &\qquad + (1-\lambda)h\left(\intO\mu\dx\right)^2 + (1-\lambda)h\left(\intG\theta\dG\right)^2 \nonumber \\
    &\quad\leq \intO \frac12 \rho_k\abs{\bv_k}^2\dx + \intG \frac12\sigma_k\abs{\bw_k}^2\dG + \intO \frac12\abs{\Grad\phi_k}^2\dx + \intG \frac12\abs{\Gradg\psi_k}^2\dG \nonumber \\
    &\qquad + \chi(K)\intG \frac12(\alpha\psi_k-\phi_k)^2\dG + \intO F_0(\phi_k)\dx + \intG G_0(\psi_k)\dG \nonumber \\
    &\qquad + \lambda \intO \frac{c_F}{2}\phi_k^2\dx + \lambda \intG \frac{c_G}{2}\psi_k^2\dG + \Bigabs{\intO F_0(\phi)\dx} + \Bigabs{\intG G_0(\psi)\dG} \nonumber \\
    &\quad\leq C_k, 
\end{align}
for a constant $C_k$ that may depend on $k$ and $h$ but is independent of $\lambda$, and  whose value may change between the lines.
Thus, recalling again that $\abs{\phi}, \abs{\phi_k} < 1$ a.e.~in $\Om$ and $\abs{\psi}, \abs{\psi_k} < 1$ a.e.~on $\Ga$, and estimating $\lambda$ by $1$, we employ the bulk-surface Poincar\'{e} inequality and obtain
\begin{align}\label{NSCH:Est:Uniform:Lambda:1}
	&\norm{(\bv,\bw)}_{\mathbfcal{H}^1} + \norm{(\mu,\theta)}_{L,\beta} + \norm{(\phi,\psi)}_{\mathcal{H}^1} + (1-\lambda)^{\frac12}\Bigabs{\intO\mu\dx} + (1-\lambda)^{\frac12}\Bigabs{\intG\theta\dG} \nonumber \\
    &\quad\leq C_k.
\end{align}
In a subsequent step, we intend to control the $\mathcal{L}^2$-norm of $(\mu,\theta)$, for which we consider two distinct cases. For $\lambda\in[0,\tfrac12)$, a direct use of \eqref{NSCH:Est:Uniform:Lambda:1} implies
\begin{align}\label{NSCH:Est:Mean:mt:lambda:1}
    \Bigabs{\intO\mu\dx} + \Bigabs{\intG\theta\dG} \leq C_k,
\end{align}
which enables us to get a control on $\norm{(\mu,\theta)}_{\mathcal{L}^2}$ in view of the bulk-surface Poincar\'{e} inequality. On the other hand, for $\lambda\in[\tfrac12,1]$, we proceed as in Lemma~\ref{NSCH:Lemma:H^2+Subdiff}, with \eqref{NSCH:Mu:Discr} and \eqref{NSCH:Theta:Discr} replaced by \eqref{NSCH:WF:mu:lambda} and \eqref{NSCH:WF:theta:lambda}, to get the estimate
\begin{align}\label{NSCH:Est:Mean:mt:lambda:2}
    \frac12\Bigabs{\intO\mu\dx} + \frac12\Bigabs{\intG\theta\dG} &\leq \lambda\Bigabs{\intO\mu\dx} + \lambda\Bigabs{\intG\theta\dG} \leq C_k.
\end{align}
Thus, employing once again the bulk-surface Poincar\'{e} inequality, we obtain from \eqref{NSCH:Est:Uniform:Lambda:1} and \eqref{NSCH:Est:Mean:mt:lambda:1}-\eqref{NSCH:Est:Mean:mt:lambda:2} the estimate
\begin{align}\label{NSCH:Est:mt:H^1:k}
	\norm{(\mu,\theta)}_{\mathcal{H}^1} \leq C_k.
\end{align}
Then, repeating the arguments from the proof of Lemma~\ref{NSCH:Lemma:H^2+Subdiff} and utilizing \eqref{NSCH:Est:Uniform:Lambda:1}, we find
\begin{align}\label{NSCH:Est:pp:H^2:k}
    \norm{(\phi,\psi)}_{\mathcal{H}^2} + \norm{\partial\widetilde E_\free(\phi,\psi)}_{\mathcal{L}^2} \leq C_k.
\end{align}
Moreover, incorporating Lemma~\ref{NSCH:Proposition:Elliptic:MuTheta}, where now
\begin{align*}
	(f,f_\Ga) = \Big(-\lambda\frac{\phi-\phi_k}{h} - \lambda\bv\cdot\Grad\phi_k + \lambda\intO\mu\dx, -\lambda\frac{\psi-\psi_k}{h} - \lambda\bw\cdot\Gradg\psi_k + \lambda\intG\theta\dG\Big),
\end{align*} 
implies in combination with \eqref{NSCH:Est:Uniform:Lambda:1} and \eqref{NSCH:Est:pp:H^2:k} that
\begin{align}\label{NSCH:Est:mt:H^2:k}
    \norm{(\mu,\theta)}_{\mathcal{H}^2} &\leq C\norm{(f,f_\Ga)}_{\mathcal{L}^2} 
    \leq C_k\Big(1 + \norm{(\bv,\bw)}_{\mathbfcal{L}^4}\norm{(\Grad\phi_k,\Gradg\psi_k)}_{\mathbfcal{L}^4}\Big) 
    \leq C_k. 
\end{align}
Ultimately, combining \eqref{NSCH:Est:Uniform:Lambda:1}, \eqref{NSCH:Est:pp:H^2:k} and \eqref{NSCH:Est:mt:H^2:k}, we obtain that $\mathbf{z} = (\bv,\bw,\phi,\psi,\mu,\theta)\in\widetilde X_{K,L} = \mathbfcal{H}^1_{0,\Div}\times\mathcal{H}^{2-s}\times\mathcal{H}^2_{L,\beta}$ satisfies the estimate
\begin{align*}
    \norm{\mathbf{z}}_{\widetilde X_{K,L}} \leq C_k,
\end{align*}
where
\begin{align*}
    \norm{\mathbf{z}}_{\widetilde X_{K,L}} = \norm{(\bv,\bw)}_{\mathbfcal{H}^1} + \norm{(\phi,\psi)}_{\mathcal{H}^{2-s}} + \norm{(\mu,\theta)}_{\mathcal{H}^2}.
\end{align*}
Eventually, since $\mathbf{f}\in\widetilde{Y}$ fulfills $\mathbf{f} = \lambda\mathcal{K}_k(\mathbf{f}) = \lambda\mathcal{F}_k(\mathbf{z})$, and $\mathcal{F}_k:\widetilde X_{K,L}\rightarrow\widetilde{Y}$ maps bounded sets into bounded sets by \eqref{F_k:bounded}, we conclude
\begin{align*}
    \norm{\mathbf{f}}_{\widetilde{Y}} = \norm{\lambda\mathcal{F}_k(\mathbf{z})}_{\widetilde{Y}} \leq C_k\big(1 + \norm{\mathbf{z}}_{\widetilde{X}_{K,L}}^2\big) \leq C_k,
\end{align*}
where the constant $C_k > 0$ is independent of $\lambda\in[0,1]$ and $\mathbf{f}$. The latter shows that the set defined in \eqref{NSCH:LeraySchauder} is bounded. Consequently, we are in a position to apply the Leray--Schauder fixed-point theorem, which entails the existence of a fixed point $\f\in\widetilde Y\emb Y$ of the operator $\mathcal{K}_k$. Thus, $\mathbf{z} = \mathcal{M}_k^{-1}(\mathbf{f})\in\widetilde X_{K,L}$ is a solution to $\mathcal{M}_k(\mathbf{z}) = \mathcal{F}_k(\mathbf{z})$. We have therefore shown, for any $k\in\N$, the existence of a solution $\mathbf{z} = (\bv,\bw,\phi,\psi,\mu,\theta) = (\bv_{k+1},\bw_{k+1},\phi_{k+1},\psi_{k+1},\mu_{k+1},\theta_{k+1})\in X_{K,L}$ to the time-discrete system \eqref{NSCH:VW:Discr}-\eqref{NSCH:Mu:Discr:Bc}, which completes the proof of Lemma~\ref{NSCH:Lemma:Existence:Discrete}.
\end{proof}

We are now in a position to present the proof of Theorem~\ref{NSCH:Theorem:ExistenceWeakSolutions:K>0}.

\begin{proof}[Proof of Theorem~\ref{NSCH:Theorem:ExistenceWeakSolutions:K>0}]
    Let $N\in\N$ and let $h = \frac1N$ be the time-step size. We consider the initial data $(\bv_0,\bw_0)\in\mathbfcal{L}^2_\Div$ and $(\phi_0,\psi_0)\in\mathcal{H}^1$ satisfying \eqref{NSCH:cond:init}. Then, according to Lemma~\ref{NSCH:Lemma:ApproxIC}, there exists a sequence $\{(\widetilde\phi_0^N,\widetilde\psi_0^N)\}_{N\in\N}\subset\mathcal{H}^2$ such that $(\widetilde\phi_0^N,\widetilde\psi_0^N)\rightarrow(\phi_0,\psi_0)$ strongly in $\mathcal{H}^1$ as $N\rightarrow\infty$ such that 
    \begin{subequations}\label{NSCH:Cond:Init:Approx}
    \begin{align}\label{NSCH:Cond:Init:Approx:Int}
    	\abs{\widetilde\phi_0^N} \leq 1 \quad\text{a.e.~in~}\Om \quad\text{and}\quad \abs{\widetilde\psi_0^N} \leq 1 \quad\text{a.e.~on~}\Ga
    \end{align} 
    for all $N\in\N$. In particular, in view of the strong convergence, there exists $m\in[0,1)$ and $\overline N\in\N$ such that, for all $N\geq\overline N$, it holds
    \begin{align}
        \abs{\beta\bigmean{\widetilde\phi_0^N}{\widetilde\psi_0^N}} \leq \overline m < 1, \qquad \abs{\bigmean{\widetilde\phi_0^N}{\widetilde\psi_0^N}} \leq \overline m < 1
    \end{align}
    if $L\in[0,\infty)$, and
    \begin{align}
        \abs{\meano{\widetilde\phi_0^N}} \leq\overline m < 1, \qquad\abs{\meang{\widetilde\psi_0^N}} \leq\overline m < 1 
    \end{align}
    \end{subequations}
    if $L = \infty$. Thus, redefining $(\phi_0^N,\psi_0^N) = (\widetilde\phi_0^{N+\overline N},\widetilde\psi_0^{N+\overline N})$ for $N\in\N$, we infer that $(\phi_0^N,\psi_0^N)$ is an admissible pair of initial data for all $N\in\N$. Then, according to Lemma~\ref{NSCH:Lemma:DiscrEnergyInequality} and Lemma~\ref{NSCH:Lemma:Existence:Discrete}, for any $k\in\N$, we can choose iteratively a solution $(\bv_{k+1},\bw_{k+1},\phi_{k+1},\psi_{k+1},\mu_{k+1},\theta_{k+1})$  to the time-discrete system \eqref{NSCH:VW:Discr}-\eqref{NSCH:Mu:Discr:Bc} with initial data $(\bv_0,\bw_0,\phi_0^N,\psi_0^N)$, which additionally satisfies the discrete energy inequality \eqref{NSCH:EnergyEstimate:Discr}.

	As in \cite[Section~5]{Abels2013}, we define $f^N(t)$ on $[-h,\infty)$ through 
    \begin{align*}
    	f^N(t) \coloneqq f_k \qquad\text{for~}t\in[(k-1)h,kh), \ k\in\N_0,
    \end{align*}
    where $f\in\{\bv,\bw,\phi,\psi,\mu,\theta\}$. Moreover, we set
	\begin{align*}
		\rho^N \coloneqq \rho(\phi^N), \qquad \sigma^N \coloneqq \sigma(\psi^N).
	\end{align*}	    
    With these definitions at hand, it in particular holds that
    \begin{align*}
        f^N((k-1)h) = f_k, \qquad f^N(kh) = f_{k+1}, \qquad f^N(t) = f_{k+1} \qquad\text{for~}t\in[kh,(k+1)h).
    \end{align*}
    Moreover, we define $f_h(t) \coloneqq f(t-h)$ and
    \begin{alignat*}{2}
        (\Lap_h^+f)(t) &\coloneqq f(t+h) - f(t), \qquad \delthp f(t) &&\coloneqq \tfrac1h(\Lap_h^+f)(t), \\
        (\Lap_h^-f)(t) &\coloneqq f(t) - f(t-h), \qquad \delthm f(t) &&\coloneqq \tfrac1h(\Lap_h^-f)(t).
    \end{alignat*}
    Then, for $(\wv,\ww)\in C_0^\infty(0,\infty;\mathbfcal{H}^2_{0,\Div})$, we take $(\tv,\tw) = \int_{kh}^{(k+1)h}(\wv,\ww)\dt$ as a test function  in \eqref{NSCH:VW:Discr} and sum over $k\in\N$. This yields
    \begin{align}\label{NSCH:WF:vw:N}
        &-\int_Q\rho^N\bv^N\cdot\delthp\wv\dxt - \int_\Sigma\sigma^N\bw^N\cdot\delthp\ww\dGt \nonumber \\
        &\qquad + \int_Q\Div(\rho_h^N\bv^N\otimes\bv^N)\cdot\wv\dxt + \int_\Sigma\Divg(\sigma_h^N\bw^N\otimes\bw^N)\cdot\ww\dGt \nonumber \\
        &\qquad + \int_Q 2\nu_\Om(\phi_h^N)\D\bv^N:\D\wv\dxt + \int_\Sigma 2\nu_\Ga(\psi_h^N)\Dg\bw^N:\Dg\ww\dGt \nonumber \\
        &\qquad + \int_\Sigma \gamma(\phi_h^N,\psi_h^N)\bw^N\cdot\ww\dGt - \int_Q(\bv^N\otimes\J^N):\Grad\wv\dxt \\
        &\qquad  - \int_\Sigma(\bw^N\otimes\K^N):\Gradg\ww\dGt \nonumber \\
        &\quad = \int_Q\mu^N\Grad\phi_h^N\cdot\wv\dxt + \int_\Sigma\theta^N\Gradg\psi_h^N\cdot\ww\dGt \nonumber \\
        &\qquad + \frac12\int_\Sigma(\J^N-\J_\Ga^N)\cdot\n\;\bw^N\cdot\ww\dGt, \nonumber
    \end{align}
    where we used the notation
    \begin{align*}
        \J^N &\coloneqq -\frac{\tilde\rho_2 - \tilde\rho_1}{2}m_\Om(\phi_h^N)\Grad\mu^N, \qquad \K^N \coloneqq -\frac{\tilde\sigma_2 - \tilde\sigma_1}{2}m_\Ga(\psi_h^N)\Gradg\theta^N, \\
        \J_\Ga^N &\coloneqq -\beta\frac{\tilde\sigma_2 - \tilde\sigma_1}{2}m_\Om(\phi_h^N)\Grad\mu^N.
    \end{align*}
    In \eqref{NSCH:WF:vw:N}, we additionally made use of the following integration by parts formula
    \begin{align*}
        &\int_Q\delthm(\rho^N\bv^N)\cdot\wv\dxt + \int_\Sigma\delthm(\sigma^N\bw^N)\cdot\ww\dGt \\
        &\qquad= -\int_Q\rho^N\bv^N\cdot\delthp\wv\dxt - \int_\Sigma\sigma^N\bw^N\cdot\delthp\ww\dGt.
    \end{align*}
    In a similar manner, we derive from \eqref{NSCH:Phi:Discr},\eqref{NSCH:Psi:Discr} and \eqref{NSCH:Phi:Discr:Bc} that
    \begin{align}\label{NSCH:WF:pp:N}
        \begin{split}
            &\int_{Q}\delthm\phi^N\,\zeta\dxt + \int_{\Sigma}\delthm\psi^N\,\zeta_\Ga\dGt \\
            &\quad= \int_{Q}\phi_h^N\bv^N\cdot\Grad\zeta\dxt + \int_{\Sigma}\psi_h^N\bw^N\cdot\Gradg\zeta_\Ga\dGt \\
            &\qquad - \int_{Q} m_\Om(\phi_h^N)\Grad\mu^N\cdot\Grad\zeta\dxt - \int_{\Sigma} m_\Ga(\psi_h^N)\Gradg\theta^N\cdot\Gradg\zeta_\Ga\dGt \\
            &\qquad - \chi(L)\int_{\Sigma}(\beta\theta^N - \mu^N)(\beta\zeta_\Ga - \zeta)\dGt
        \end{split}
    \end{align}
    for all $(\zeta,\zeta_\Ga)\in C_0^\infty(0,\infty;\mathcal{H}^1_{L,\beta})$, while we infer from \eqref{NSCH:Mu:Discr}, \eqref{NSCH:Theta:Discr} and \eqref{NSCH:Mu:Discr:Bc} that
    \begin{subequations}\label{NSCH:WF:mt:N}
        \begin{alignat}{2}
            &\mu^N + \frac{c_F}{2}(\phi + \phi^N) = -\Lap\phi^N + F_0^\prime(\phi^N) &&\qquad\text{a.e.~in~}Q, \label{NSCH:WF:mu:N}\\
            &\theta^N + \frac{c_G}{2}(\psi + \psi^N) = -\Lapg\psi^N + G_0^\prime(\psi^N) + \alpha\deln\phi^N &&\qquad\text{a.e.~on~}\Sigma, \label{NSCH:WF:theta:N}\\
            &K\deln\phi^N = \alpha\psi^N - \phi^N, \qquad K\in(0,\infty], &&\qquad\text{a.e.~on~}\Sigma. \label{NSCH:WF:phi:bc:N}
        \end{alignat}
    \end{subequations}

    Let $E^N(t)$ be the piecewise linear interpolant of $E_{\mathrm{tot}}(\bv_k,\bw_k,\phi_k,\psi_k)$ at $t_k = kh$, i.e., $E^N(t)$ is defined as
    \begin{align*}
        E^N(t) \coloneqq \frac{(k+1)h - t}{h} E_{\mathrm{tot}}(\bv_k,\bw_k,\phi_k,\psi_k) + \frac{t-kh}{h}E_{\mathrm{tot}}(\bv_{k+1},\bw_{k+1},\phi_{k+1},\psi_{k+1})
    \end{align*}
    for $t\in (t_k,t_{k+1})$. For all $t\in (t_k,t_{k+1})$, $k\in\N$, we also define the dissipation
    \begin{align*}
        D^N(t) &\coloneqq \intO 2\nu_\Om(\phi_k)\abs{\D\bv_{k+1}}^2\dx + \intG 2\nu_\Ga(\psi_k)\abs{\Dg\bw_{k+1}}^2\dG + \intG \gamma(\phi_k,\psi_k)\abs{\bw_{k+1}}^2\dG \\
        &\quad + \intO m_\Om(\phi_k)\abs{\Grad\mu_{k+1}}\dx + \intG m_\Ga(\psi_k)\abs{\Gradg\theta_{k+1}}^2\dG + \chi(L) \intG (\beta\theta_{k+1} - \mu_{k+1})^2\dG.
    \end{align*}
    From the discrete energy estimate \eqref{NSCH:EnergyEstimate:Discr} we obtain
    \begin{align}\label{NSCH:Est:E^N}
        -\ddt E^N(t) = \frac{E_{\mathrm{tot}}(\bv_k,\bw_k,\phi_k,\psi_k) - E_{\mathrm{tot}}(\bv_{k+1},\bw_{k+1},\phi_{k+1},\psi_{k+1})}{h} \geq D^N(t)
    \end{align}
    for all $t\in(t_k,t_{k+1})$, $k\in\N$. Then, integrating \eqref{NSCH:Est:E^N} in time directly leads to
    \begin{align}\label{NSCH:EnergyEstimate:Cont}
        \begin{split}
            &E_{\mathrm{tot}}\big(\bv^N(t),\bw^N(t),\phi^N(t),\psi^N(t)\big) + \int_s^t\intO 2\nu_\Om(\phi_h^N)\abs{\D\bv^N}^2\dxtau \\
            &\qquad + \int_s^t\intG 2\nu_\Ga(\psi_h^N)\abs{\Dg\bw^N}^2\dGtau + \int_s^t\intG \gamma(\phi_h^N,\psi_h^N)\abs{\bw^N}^2\dGtau \\
            &\qquad + \int_s^t\intO m_\Om(\phi_h^N)\abs{\Grad\mu^N}^2\dxtau + \int_s^t\intG m_\Ga(\psi_h^N)\abs{\Gradg\theta^N}^2\dGtau \\
            &\qquad + \chi(L) \int_s^t\intG (\beta\theta^N - \mu^N)^2\dGtau \\
            &\quad \leq E_{\mathrm{tot}}\big(\bv^N(s),\bw^N(s),\phi^N(s),\psi^N(s)\big)
        \end{split}
    \end{align}
    for all $0 \leq s \leq t < \infty$ with $s,t\in h\N$. Now, by \eqref{NSCH:Cond:Init:Approx:Int}, we have that $F_0(\phi_0^N)$ and $G_0(\psi_0^N)$ are uniformly bounded in $L^1(\Om)$ and $L^1(\Ga)$, respectively. Then, invoking the bulk-surface Korn inequality and, recalling that the discrete solution $(\phi_k,\psi_k)$ satisfies the mass conservation law \eqref{NSCH:MCL:Discrete}, the bulk-surface Poincar\'{e} inequality, we find a constant $C > 0$, independent of $N\in\N$, such that
    \begin{align}
    	&\norm{(\bv^N,\bw^N)}_{L^\infty(0,\infty;\mathbfcal{L}^2)} + \norm{(\bv^N,\bw^N)}_{L^2(0,\infty;\mathbfcal{H}^1)} \leq C, \label{NSCH:Est:vw:Uniform:N:1} \\
    	&\norm{(\phi^N,\psi^N)}_{L^\infty(0,\infty;\mathcal{H}^1)} \leq C, \label{NSCH:Est:pp:Uniform:N:1} \\
    	&\norm{(\Grad\mu^N,\Gradg\theta^N)}_{L^2(0,\infty;\mathbfcal{L}^2)} + \chi(L)^{\frac12}\norm{\beta\theta^N-\mu^N}_{L^2(0,\infty;L^2(\Ga))} \leq C. \label{NSCH:Est:mt:Uniform:N:1}
	\end{align}
    Furthermore, in view Lemma~\ref{NSCH:Lemma:H^2+Subdiff}, we find that
    \begin{align*}
        \int_0^T\abs{\mean{\mu^N}{\theta^N}}\dt \leq C\Big(T + \int_0^T \norm{(\mu^N,\theta^N)}_{L,\beta}\dt\Big)
    \end{align*}
    for all $T > 0$. Thus, employing again the bulk-surface Poincar\'{e} inequality, for any $T > 0$, there exists a constant $C = C(T)$, independent of $N$, such that
    \begin{align}\label{NSCH:Est:mt:Uniform:N:2}
        \norm{(\mu^N,\theta^N)}_{L^2(0,T;\mathcal{H}^1)} \leq C .
    \end{align}
    Using these uniform bounds, we can apply the Banach--Alaoglu theorem to conclude the existence of a sextuplet $(\bv,\bw,\phi,\psi,\mu,\theta)$ such that
    \begin{subequations}
    \begin{alignat}{2}
        (\bv^N,\bw^N) &\rightarrow (\bv,\bw) &&\qquad\text{weakly in~} L^2(0,\infty;\mathbfcal{H}^1), \label{NSCH:Conv:vw:Weakly} \\
        & &&\qquad\text{weakly-star in~} L^\infty(0,\infty;\mathbfcal{L}^2), \\
        (\phi^N,\psi^N) &\rightarrow (\phi,\psi) &&\qquad\text{weakly-star in~} L^\infty(0,\infty;\mathcal{H}^1), \label{NSCH:Conv:pp:Weakly} \\
        (\Grad\mu^N,\Gradg\theta^N) &\rightarrow (\Grad\mu,\Gradg\theta) &&\qquad\text{weakly in~} L^2(0,\infty;\mathbfcal{L}^2), \\
        \beta\theta^N - \mu^N &\rightarrow \beta\theta - \mu &&\qquad\text{weakly in~} L^2(0,\infty; L^2(\Ga)), \label{NSCH:Conv:bt-m:Weakly} \\
        (\mu^N,\theta^N) &\rightarrow (\mu,\theta) &&\qquad\text{weakly in~} L^2(0,T;\mathcal{H}^1), \label{NSCH:Conv:mt:Weakly}
    \end{alignat}
    \end{subequations}
    for all $T > 0$, along a non-relabeled subsequence $N\rightarrow\infty$. To pass to the limit in the nonlinearities, we need additional strong convergences of $(\phi^N,\psi^N)$ and $(\bv^N,\bw^N)$. 

    To this end, let $\widetilde{\phi}^N \coloneqq \tfrac1h\mathbf{1}_{[0,h]}\ast_t\phi^N$ be the piecewise linear interpolant of $\phi^N(t_k)$ where $t_k = kh$ and $k\in\N$. Here, the convolution is only taken with respect to the time variable. Similarly, we define $\widetilde{\psi}^N \coloneqq \tfrac1h\mathbf{1}_{[0,h]}\ast_t\phi^N$. Then,
    \begin{align*}
        \delt\widetilde{\phi}^N = \delthm\phi^N, \qquad \delt\tilde{\psi}^N = \delthm\psi^N,
    \end{align*}
    and
    \begin{align}\label{NSCH:Est:pp:N:Diff}
        \norm{(\widetilde{\phi}^N - \phi^N, \widetilde{\psi}^N - \psi^N)}_{(\mathcal{H}^1_{L,\beta})^\prime} \leq h\norm{(\delt\widetilde{\phi}^N, \delt\widetilde{\psi}^N)}_{(\mathcal{H}^1_{L,\beta})^\prime}.
    \end{align}
    Due to the aforementioned uniform bounds \eqref{NSCH:Est:vw:Uniform:N:1}-\eqref{NSCH:Est:mt:Uniform:N:1}, we infer, using a comparison argument in \eqref{NSCH:WF:pp:N}, that
    \begin{align}\label{NSCH:Est:delt:pp:tilde:N}
        \norm{(\delt\widetilde{\phi}^N,\delt\widetilde{\psi}^N)}_{L^2(0,\infty;(\mathcal{H}^1_{L,\beta})^\prime)} \leq C.
    \end{align}
    Next, note that the boundedness of $(\phi^N,\psi^N)$ in $L^\infty(0,\infty;\mathcal{H}^1)$ implies the boundedness of $(\widetilde{\phi}^N,\widetilde{\psi}^N)$ in the same space. Combining this fact with the estimate \eqref{NSCH:Est:delt:pp:tilde:N}, we get, with the help of the Aubin--Lions--Simon lemma, for any $T > 0$, the strong convergence
    \begin{align}\label{NSCH:Conv:pp:Strongly}
        (\widetilde \phi^N, \widetilde \psi^N) \rightarrow (\widetilde \phi, \widetilde \psi) \quad\text{strongly in~} C([0,T];\mathcal{H}^{1-s}) \text{~for all~}s\in(0,\tfrac12)
    \end{align}
    along a subsequence $N\rightarrow\infty$ for some $(\widetilde \phi, \widetilde \psi)\in L^\infty(0,\infty;\mathcal{H}^1)$. In particular, it holds that $\widetilde \phi^N\rightarrow\widetilde \phi$ a.e.~in $Q$ as well as $\widetilde \psi^N\rightarrow\widetilde \psi$ a.e.~on $\Sigma$. Furthermore, the estimates \eqref{NSCH:Est:pp:N:Diff} and \eqref{NSCH:Est:delt:pp:tilde:N} entail that
    \begin{align*}
        (\widetilde \phi^N - \phi^N, \widetilde \psi^N - \psi^N) \rightarrow (0,0) \qquad\text{strongly in~} L^2(0,\infty;(\mathcal{H}^1_{L,\beta})^\prime)
    \end{align*}
    as $N\rightarrow\infty$, and consequently, $(\widetilde \phi,\widetilde \psi) = (\phi,\psi)$. Moreover, the convergences \eqref{NSCH:Conv:pp:Weakly} and \eqref{NSCH:Conv:pp:Strongly} combined with Lemma~\ref{Prelim:Lemma:BC_w} yield
    \begin{align*}
        (\phi,\psi) \in BC_w([0,\infty);\mathcal{H}^1).
    \end{align*}
    Then, to verify the initial conditions for $\phi$ and $\psi$, we first notice that, in view of \eqref{NSCH:Conv:pp:Strongly}, we have 
\begin{align*}
	(\widetilde\phi^N(0),\widetilde\psi^N(0))\rightarrow (\phi(0),\psi(0))\qquad\text{strongly in~}\mathcal{L}^2
\end{align*}
as $N\rightarrow\infty$. Besides, it holds that
\begin{align*}
	(\widetilde\phi^N(0),\widetilde\psi^N(0)) = (\phi_0^N,\psi_0^N)\rightarrow (\phi_0,\psi_0) \qquad\text{strongly in~}\mathcal{H}^1
\end{align*}
as $N\rightarrow\infty$. We deduce that $(\phi,\psi)\vert_{t=0} = (\phi_0,\psi_0)$ a.e.~in $\Om\times\Ga$.

    To pass to the limit in \eqref{NSCH:WF:mu:N}-\eqref{NSCH:WF:theta:N}, we first point out that the right-hand side is given by $\partial \widetilde E_\free(\phi^N,\psi^N)$, which is bounded according to Lemma~\ref{NSCH:Lemma:H^2+Subdiff} in $L^2(0,T;\mathcal{L}^2)$ for all $T > 0$. Furthermore, the left-hand side converges weakly in $L^2(0,T;\mathcal{L}^2)$ as $N\rightarrow\infty$ to
    \begin{align*}
        (f,f_\Ga) \coloneqq (\mu + c_F\phi, \theta + c_G\psi)
    \end{align*}
    for all $T > 0$, which means that 
    \begin{align*}
        \partial \widetilde E_\free(\phi^N,\psi^N) \rightarrow (f,f_\Ga) \qquad\text{weakly in~} L^2(0,T;\mathcal{L}^2)
    \end{align*}
    as $N\rightarrow\infty$ for all $T > 0$.
    Consequently, we infer together with the strong convergence \eqref{NSCH:Conv:pp:Strongly} that
    \begin{align*}
        \bigang{\partial \widetilde E_\free(\phi^N,\psi^N)}{(\phi^N,\psi^N)}_{\mathcal{L}^2} \rightarrow \bigang{(\mu + c_F\phi,\theta + c_G\psi)}{(\phi,\psi)}_{\mathcal{L}^2}
    \end{align*}
    as $N\rightarrow\infty$. With $\partial\widetilde E_\free$ being a maximal monotone operator, we infer that $(\phi,\psi)\in\mathrm{dom}\,(\partial\widetilde E_\free)$ and $\partial\widetilde E_\free(\phi,\psi) = (f,f_\Ga)$ by \cite[Proposition~2.1]{Barbu2010}, thereby establishing \eqref{NSCH:WeakSolution:MuTheta}. This identity additionally entails that $\partial\widetilde E_\free(\phi,\psi)\in L^2_{\mathrm{uloc}}([0,\infty);\mathcal{H}^1)$, and thus, with another application of \cite[Proposition~5.1]{Giorgini2026} implies the desired regularities
    \begin{align*}
    	(\phi,\psi)\in L^2_{\mathrm{uloc}}([0,\infty);\mathcal{W}^{2,p}), \qquad (F_0^\prime(\phi),G_0^\prime(\psi))\in L^2_{\mathrm{uloc}}([0,\infty);\mathcal{L}^p),
    \end{align*}
    for $p = 6$ if $d = 3$ and any $p\in[2,\infty)$ if $d = 2$.
    
    To get a better strong convergence result for $(\phi^N,\psi^N)$,
    we first recall that by Lemma~\ref{NSCH:Lemma:H^2+Subdiff}, $\{(\phi^N,\psi^N)\}_{N\in\N}$ is bounded in $L^2(0,T;\mathcal{H}^2)$ for any $T > 0$, which, together with \eqref{NSCH:Conv:pp:Strongly} and suitable interpolation inequalities further implies that, up to a subsequence,
    \begin{align}\label{NSCH:Conv:pp:Strong:H2-s}
    	(\phi^N,\psi^N) \rightarrow (\phi,\psi) \qquad\text{strongly in~} L^2(0,T;\mathcal{H}^{2-s}) \quad\text{for all~}s\in(0,1)
    \end{align}
    as $N\rightarrow\infty$ for any $T > 0$.
    
    In the next step, we show the strong convergence of $\{(\bv^N,\bw^N)\}_{N\in\N}$ in $L^2(0,T;\mathbfcal{L}^2)$ for any $T > 0$. To this end, we denote with $\widetilde{\rho\bv}^N$ and $\widetilde{\sigma\bw}^N$ the piecewise linear interpolant of $(\rho^N\bv^N)(t_k)$ and $(\sigma^N\bw^N)(t_k)$, respectively, where $t_k = kh$, $k\in\N$, and let $T > 0$. As previously seen, it then holds that
    \begin{align*}
    	\delt(\widetilde{\rho\bv}^N) = \delthm(\rho^N\bv^N), \qquad \delt(\widetilde{\sigma\bw}^N) = \delthm(\sigma^N\bw^N).
    \end{align*}
    Now, proceeding similarly to the line of argument in \cite[Section~5]{Abels2013}, we use the uniform estimates \eqref{NSCH:Est:vw:Uniform:N:1}-\eqref{NSCH:Est:mt:Uniform:N:1} along with standard interpolation results to show that
\begin{align}\label{NSCH:Est:Vel:AGD}
    \{(\rho_h^N\bv^N\otimes\bv^N, \sigma_h^N\bw^N\otimes\bw^N)\}_{N\in\N}
        &\ \text{~is uniformly bounded in~} \ L^2(0,T;\mathbfcal{L}^{\frac32}), 
        \nonumber \\
    \{(\bv^N\otimes\Grad\mu^N, \bw^N\otimes\Gradg\theta^N)\}_{N\in\N} 
        &\ \text{~is uniformly bounded in~} \ L^{\frac87}(0,T;\mathbfcal{L}^{\frac43}), 
       \nonumber \\
    \{(\mu^N\Grad\phi_h^N, \theta^N\Gradg\psi_h^N)\}_{N\in\N}
        &\ \text{~is uniformly bounded in~} \ L^2(0,T;\mathbfcal{L}^{\frac32}), 
        \\
    \{(\beta\theta^N - \mu^N)\bw^N\}_{N\in\N}
        &\ \text{~is uniformly bounded in~} \ L^{\frac87}(0,T;\mathbf{L}^{\frac43}(\Ga)). \nonumber
\end{align}
Moreover, via the trace embedding $H^1(\Omega) \emb L^2(\Gamma)$, we infer from \eqref{NSCH:Est:vw:Uniform:N:1} that
\begin{align}\label{NSCH:Est:Vel:AGD:2}
    \begin{split}
        \{(\D\bv^N,\Dg\bw^N)\}_{N\in\N}
        &\ \text{~is uniformly bounded in~} L^2(0,T;\mathbfcal{L}^2), 
        \\
        \{\bw^N\}_{N\in\N}
        &\ \text{~is uniformly bounded in~} L^2(0,T;\mathbf{L}^2(\Ga)).
    \end{split}
\end{align}
Regarding the last term on the left-hand side of \eqref{NSCH:WF:vw:N}, we note that, if $L\in(0,\infty]$, it holds that
\begin{align*}
	&\frac12\intS(\J^N-\J_\Ga^N)\cdot\n\,\bw^N\cdot\ww\dGt \\
	&\quad = \frac12\Big(\beta\frac{\tilde\sigma_2 - \tilde\sigma_1}{2} + \frac{\tilde\rho_2 - \tilde\rho_1}{2}\Big)\intS m_\Om(\phi_h^N)\deln\mu^N\bw^N\cdot\ww\dGt \\
	&\quad = \frac{\chi(L)}{2}\Big(\beta\frac{\tilde\sigma_2 - \tilde\sigma_1}{2} + \frac{\tilde\rho_2 - \tilde\rho_1}{2}\Big)\intS (\beta\theta^N-\mu^N)\bw^N\cdot\ww\dGt,
\end{align*}
whereas in the case $L = 0$, we have
\begin{align*}
	\frac12\intS(\J^N-\J_\Ga^N)\cdot\n\,\bw^N\cdot\ww\dGt = 0
\end{align*}
for all $\bw\in C_c^\infty(0,\infty;\mathbf{H}^2_\Div(\Ga))$ as $\beta(\tilde\sigma_2-\tilde\sigma_1)=-(\tilde\rho_2-\tilde\rho_1)$ by assumption \eqref{Densities:Comp:Weak}. Thus, due to this observation and the uniform bounds \eqref{NSCH:Est:Vel:AGD} and \eqref{NSCH:Est:Vel:AGD:2}, any $(\wv,\ww)\in L^8(0,T;\mathbfcal{W}^{1,4}_{0,\Div})$ is an admissible test function in \eqref{NSCH:WF:vw:N}. Hence, by a comparison argument, we infer that $\{\delt\mathbfcal{P}_\Div(\widetilde{\rho\bv}^N,\widetilde{\sigma\bw}^N)\}_{N\in\N}$ is bounded in $L^{\frac87}(0,T;(\mathbfcal{W}^{1,4}_{0,\Div})^\prime)$, where $\mathbfcal{P}_\Div = (\mathbf P_\Div^\Om, \mathbf P_\Div^\Ga)$. In addition, as we know that $\{\mathbfcal{P}_\Div(\widetilde{\rho\bv}^N,\widetilde{\sigma\bw}^N)\}_{N\in\N}$ is bounded in $L^2(0,T;\mathbfcal{H}^1)$, we deduce from the Aubin--Lions--Simon lemma the strong convergence
\begin{align}\label{NSCH:Conv:P_Div:N:L2L2}
	\mathbfcal{P}_\Div(\widetilde{\rho\bv}^N,\widetilde{\sigma\bw}^N) \rightarrow (\bv_\ast,\bw_\ast) \qquad\text{strongly in~} L^2(0,T;\mathbfcal{L}^2)
\end{align}
as $N\rightarrow\infty$, up to a subsequence, for some $(\bv_\ast,\bw_\ast)\in L^\infty(0,T;\mathbfcal{L}^2)$. Then, exploiting the weak continuity of $\mathbfcal{P}_\Div$ we conclude from the weak convergence $(\widetilde{\rho\bv}^N,\widetilde{\sigma\bw}^N)\rightarrow (\rho\bv,\sigma\bw)$ in $L^2(0,T;\mathbfcal{L}^2)$ as $N\rightarrow\infty$ that $(\bv_\ast,\bw_\ast) = \mathbfcal{P}_\Div(\rho\bv,\sigma\bw)$.

Next, due to \eqref{NSCH:Conv:vw:Weakly} and \eqref{NSCH:Conv:P_Div:N:L2L2}, we have
\begin{align*}
	&\int_{Q_T} \rho^N\abs{\bv^N}^2\dxt + \int_{\Sigma_T} \sigma^N\abs{\bw^N}^2\dGt \\
	&\quad = \int_{Q_T} \mathbf{P}_\Div^\Om(\rho^N\bv^N)\cdot\bv^N\dxt + \int_{\Sigma_T} \mathbf{P}_\Div^\Ga(\sigma^N\bw^N)\cdot\bw^N\dGt \\
	&\quad\longrightarrow \int_{Q_T} \mathbf{P}_\Div^\Om(\rho\bv)\cdot\bv\dxt + \int_{\Sigma_T} \mathbf{P}_\Div^\Ga(\sigma\bw)\cdot\bw\dGt \\
	&\quad = \int_{Q_T} \rho\abs{\bv}^2\dxt + \int_{\Sigma_T} \sigma\abs{\bw}^2\dGt
\end{align*}
as $N\rightarrow\infty$. Moreover, for all $(\tv,\tw)\in L^2(0,T;\mathbfcal{L}^2)$ it holds that
\begin{align*}
	&\int_{Q_T} \big((\rho^N)^{\frac12}\bv^N - \rho(\phi)^{\frac12}\bv\big)\cdot\wv\dxt + \int_{\Sigma_T}\big((\sigma^N)^{\frac12}\bw^N - \sigma^{\frac12}(\psi)\bw\big)\cdot\ww\dGt \\
	&\quad = \int_{Q_T} \big((\rho^N)^{\frac12} - \rho^\frac12\big)\wv\cdot\bv\dxt + \int_{Q_T} \rho^{\frac12}(\bv^N - \bv)\cdot\wv\dxt \\
	&\qquad + \int_{\Sigma_T} \big((\sigma^N)^{\frac12} - \sigma^{\frac12}\big)\ww\cdot\bw^N\dGt + \int_{\Sigma_T}\sigma^{\frac12}(\bw^N - \bw)\cdot\ww\dGt \\
	&\quad\longrightarrow 0
\end{align*}
as $N\rightarrow\infty$. Here, we have used the strong convergence 
\begin{align*}
	\big((\rho^N)^{\frac12}\wv,(\sigma^N)^{\frac12}\ww\big) \rightarrow \big(\rho^{\frac12}\wv,\sigma^{\frac12}\ww\big) \qquad\text{strongly in~} L^2(0,T;\mathbfcal{L}^2)
\end{align*}
as $N\rightarrow\infty$ in combination with the weak convergence \eqref{NSCH:Conv:vw:Weakly}. Consequently, as we have weak convergence and the convergence of norms, we readily conclude that
\begin{align*}
	\big((\rho^N)^{\frac12}\bv^N, (\sigma^N)^{\frac12}\bw^N\big) \rightarrow \big(\rho^{\frac12}\bv,\sigma^{\frac12}\bw) \qquad\text{strongly in~} L^2(0,T;\mathbfcal{L}^2)
\end{align*}
as $N\rightarrow\infty$. Lastly, since $\rho^N\rightarrow\rho$ a.e.~in $Q$, and $\rho \geq \rho_\ast > 0$, we use Lebesgue's dominated convergence theorem to deduce that
\begin{align}\label{NSCH:Conv:v:Strong:L2}
	\bv^N = (\rho^N)^{-\frac12}\big((\rho^N)^{\frac12}\bv^N\big) \rightarrow \bv \qquad\text{strongly in~} L^2(0,T;\mathbf{L}^2(\Om))
\end{align}
as $N\rightarrow\infty$. Similarly, we find that
\begin{align}\label{NSCH:Conv:w:Strong:L2}
	\bw^N = (\sigma^N)^{-\frac12}\big((\sigma^N)^{\frac12}\bw^N\big) \rightarrow \bw \qquad\text{strongly in~} L^2(0,T;\mathbf{L}^2(\Ga))
\end{align}
as $N\rightarrow\infty$.

Now, to pass to the limit in \eqref{NSCH:WF:vw:N}, \eqref{NSCH:WF:pp:N} and \eqref{NSCH:WF:mt:N}, we first notice that
\begin{align}\label{NSCH:CONV:pp:Strong:L4H1}
	(\phi^N,\psi^N) \rightarrow (\phi,\psi) \qquad\text{strongly in~} L^4(0,T;\mathcal{H}^1)
\end{align}
as $N\rightarrow\infty$, which follows by interpolation using \eqref{NSCH:Est:pp:Uniform:N:1} and \eqref{NSCH:Conv:pp:Strong:H2-s}. We thus readily get
\begin{align*}
	&\int_{Q_T}\mu^N\Grad\phi_h^N\cdot\wv\dxt + \int_{\Sigma_T}\theta^N\Gradg\psi_h^N\cdot\ww\dGt \\
	&\quad = -\int_{Q_T}\big(\Grad\mu^N\phi_h^N\big)\cdot\wv\dxt - \int_{\Sigma_T}\big(\Gradg\theta^N\psi_h^N\big)\cdot\ww\dGt \\
	&\quad\longrightarrow -\int_{Q_T}\big(\Grad\mu\,\phi\big)\cdot\wv\dxt - \int_{\Sigma_T}\big(\Gradg\theta\,\psi\big)\cdot\ww\dGt \\
	&\quad = \int_{Q_T}\mu\Grad\phi\cdot\wv\dxt + \int_{\Sigma_T}\theta\Gradg\psi\cdot\ww\dGt
\end{align*}
as $N\rightarrow\infty$ for all $(\wv,\ww)\in C_c^\infty(0,T;\mathbfcal{H}^2_{0,\Div})$ . Then, \eqref{NSCH:Conv:bt-m:Weakly} together with the strong convergence \eqref{NSCH:Conv:w:Strong:L2} readily implies in the case $L\in(0,\infty]$ that
\begin{align*}
	&\frac12\int_{\Sigma_T}\big(\J^N - \J_\Ga^N\big)\cdot\bw^N\cdot\ww\dGt \\
	&\quad = \frac{\chi(L)}{2}\Big(\beta\frac{\tilde\sigma_2 - \tilde\sigma_1}{2} + \frac{\tilde\rho_2 - \tilde\rho_1}{2}\Big)\int_{\Sigma_T} (\beta\theta^N-\mu^N)\bw^N\cdot\ww\dGt \\
	&\rightarrow \frac{\chi(L)}{2}\Big(\beta\frac{\tilde\sigma_2 - \tilde\sigma_1}{2} + \frac{\tilde\rho_2 - \tilde\rho_1}{2}\Big)\int_{\Sigma_T} (\beta\theta-\mu)\bw\cdot\ww\dGt
\end{align*}
as $N\rightarrow\infty$ for all $(\wv,\ww)\in C_c^\infty(0,T;\mathbfcal{H}^2_{0,\Div})$. In the case $L = 0$, we have already seen that this term disappears in view of the assumption \eqref{Densities:Comp:Weak}. Then, recalling the convergences \eqref{NSCH:Conv:vw:Weakly}-\eqref{NSCH:Conv:mt:Weakly}, \eqref{NSCH:Conv:pp:Strong:H2-s} and \eqref{NSCH:Conv:v:Strong:L2}-\eqref{NSCH:Conv:w:Strong:L2}, as well as the uniform bounds \eqref{NSCH:Est:Vel:AGD}-\eqref{NSCH:Est:Vel:AGD:2}, we can pass to the limit $N\rightarrow\infty$ in \eqref{NSCH:WF:vw:N} and \eqref{NSCH:WF:pp:N} to get \eqref{NSCH:WF:VW} and \eqref{NSCH:WF:PP}.

Moreover, by arguing as in \cite[Section~6.4]{Knopf2025a}, we can show that
\begin{align*}
    (\bv,\bw)\in BC_w([0,\infty);\mathbfcal{L}^2),
\end{align*}
and that the initial conditions $\bv\vert_{t=0} = \bv_0$ a.e.~in $\Om$ and $\bw\vert_{t=0} = \bw_0$ a.e.~on $\Ga$ are fulfilled.

Finally, we verify the energy inequality \eqref{NSCH:WeakSolution:EnergyInequality} as well as the separate energy inequalities \eqref{EnergyInequality:Kinetic:K>0} and \eqref{EnergyInequality:Free:K>0}. To show \eqref{NSCH:WeakSolution:EnergyInequality}, let $\pi\in W^{1,1}(0,\infty)$ with $\pi\geq 0$, and multiply \eqref{NSCH:Est:E^N} with $\pi$, integrate in time, and apply integration by parts. We find
\begin{align}\label{NSCH:Est:E^N:2}
	E_{\mathrm{tot}}(\bv_0,\bw_0,\phi_0^N,\psi_0^N)\pi(0) + \int_0^\infty E^N(t)\pi^\prime(t)\dt \geq \int_0^\infty D^N(t)\pi(t)\dt.
\end{align}
Due to the strong convergences \eqref{NSCH:Conv:pp:Strong:H2-s},\eqref{NSCH:Conv:v:Strong:L2}, and \eqref{NSCH:Conv:w:Strong:L2} we readily find that
\begin{alignat*}{2}
	(\bv^N(t),\bw^N(t)) &\rightarrow (\bv(t),\bw(t)) &&\qquad\text{strongly in~}\mathbfcal{L}^2, \\
	(\phi^N(t),\psi^N(t)) &\rightarrow (\phi(t),\psi(t)) &&\qquad\text{strongly in~}C(\overline\Om)\times C(\Ga)
\end{alignat*}
for a.e.~$t\in(0,\infty)$ as $N\rightarrow\infty$ up to a subsequence extraction. These convergences readily entail that
\begin{align*}
	E^N(t) \rightarrow E_{\mathrm{tot}}(\bv(t),\bw(t),\phi(t),\psi(t))
\end{align*}
for a.e.~$t\in(0,\infty)$ as $N\rightarrow\infty$. On the other hand, by lower semicontinuity of norms and the almost everywhere convergence of $(\phi^N,\psi^N)$ to $(\phi,\psi)$, we find
\begin{align*}
	\liminf_{N\rightarrow\infty} \int_0^\infty D^N(t)\pi(t)\dt \geq \int_0^\infty D(t)\pi(t)\dt,
\end{align*}
where
\begin{align*}
	D(t) &\coloneqq \intO 2\nu_\Om(\phi(t))\abs{\D\bv(t)}^2\dx + \intG 2\nu_\Ga(\psi(t))\abs{\Dg\bw(t)}^2\dG + \intG \gamma(\phi(t),\psi(t))\abs{\bw(t)}^2\dG \\
	&\quad + \intO m_\Om(\phi(t))\abs{\Grad\mu(t)}^2\dx + \intG m_\Ga(\psi(t))\abs{\Gradg\theta(t)}^2\dG + \chi(L)\intG (\beta\theta(t) - \mu(t))^2\dG.
\end{align*}
We also refer to the proof of \cite[Theorem~4.1]{Knopf2024} or to \cite[Proof of Theorem~3.3, Step~4]{Giorgini2023}, where similar arguments were carried out in detail to show an energy inequality.
Hence, passing to the limit $N\rightarrow\infty$ in \eqref{NSCH:Est:E^N:2}, we deduce
\begin{align*}
	E_{\mathrm{tot}}(\bv_0,\bw_0,\phi_0,\psi_0)\pi(0) + \int_0^\infty E_{\mathrm{tot}}(\bv(t),\bw(t),\phi(t),\psi(t))\pi^\prime(t)\dt \geq \int_0^\infty D(t)\pi(t)\dt.
\end{align*}
Since $\pi\in W^{1,1}(0,\infty)$ with $\pi \geq 0$ was arbitrary, we can apply Lemma~\ref{NSCH:Prelim:Lemma:Energy}, and conclude that the energy inequality \eqref{NSCH:WeakSolution:EnergyInequality} holds true.

To show that the two separate energy inequalities \eqref{EnergyInequality:Kinetic:K>0} and \eqref{EnergyInequality:Free:K>0}, we start by noticing that the approximate solutions satisfy
\begin{align}\label{EI:DISCR:KIN}
    &E_\mathrm{kin}(\bv^N(t),\bw^N(t),\phi^N(t),\psi^N(t)) \nonumber \\
    &\qquad + \int_s^t\intO2\nu_\Om(\phi_h^N)\abs{\D\bv^N}^2\dxtau + \int_s^t\intG 2\nu_\Ga(\psi_h^N)\abs{\Dg\bw^N}^2\dGtau \nonumber \\
    &\qquad + \int_s^t\intG \gamma(\phi_h^N,\psi_h^N)\abs{\bw^N}^2\dGtau \nonumber \\
    &\quad\leq E_\mathrm{kin}(\bv^N(s),\bw^N(s),\phi^N(s),\psi^N(s)) - \int_s^t\intO \phi_h^N\bv^N\cdot\Grad\mu^N\dxtau \nonumber \\
    &\qquad - \int_s^t\intG\psi_h^N\bw^N\cdot\Gradg\theta^N\dGtau
\end{align}
as well as
\begin{align}\label{EI:DISCR:FREE}
    &E_\free(\phi^N(t),\psi^N(t)) \nonumber \\
    &\qquad + \int_s^t\intO m_\Om(\phi_h^N)\abs{\Grad\mu^N}^2\dxtau + \int_s^t\intG m_\Ga(\psi_h^N)\abs{\Gradg\theta^N}^2\dGtau \nonumber \\
    &\qquad + \chi(L)\int_s^t\intG (\beta\theta^N-\mu^N)^2\dGtau \nonumber \\
    &\quad\leq E_\free(\phi^N(s),\psi^N(s)) + \int_s^t\intO \phi_h^N\bv^N\cdot\Grad\mu^N\dxtau + \int_s^t\intG \psi_h^N\bw^N\cdot\Gradg\theta^N\dGtau
\end{align}
for all $t\geq 0$ and a.e~$s\in[0,\infty)$ with $s = 0$ included. In view of the convergences above, the energies at the terminal time are weakly lower semicontinuous with respect to $N\rightarrow\infty$, while the energies at the initial time $s$ converge for almost every $s\in(0,\infty)$. Together with the weak lower semicontinuity of the dissipation terms and the convergence of the convective coupling terms, this allows us to pass to the limit $N\rightarrow\infty$ in \eqref{EI:DISCR:KIN} and \eqref{EI:DISCR:FREE}. We thereby obtain 
\begin{align}\label{EI:DISCR:KIN:AE}
    &E_\mathrm{kin}(\bv(t),\bw(t),\phi(t),\psi(t)) \nonumber \\
    &\qquad + \int_s^t\intO2\nu_\Om(\phi)\abs{\D\bv}^2\dxtau + \int_s^t\intG 2\nu_\Ga(\psi)\abs{\Dg\bw}^2\dGtau + \int_s^t\intG \gamma(\phi,\psi)\abs{\bw}^2\dGtau \nonumber \\
    &\quad\leq E_\mathrm{kin}(\bv(s),\bw(s),\phi(s),\psi(s)) - \int_s^t\intO \phi\bv\cdot\Grad\mu\dxtau - \int_s^t\intG\psi\bw\cdot\Gradg\theta\dGtau
\end{align}
as well as
\begin{align}\label{EI:DISCR:FREE:AE}
    &E_\free(\phi(t),\psi(t)) \nonumber \\
    &\qquad + \int_s^t\intO m_\Om(\phi)\abs{\Grad\mu}^2\dxtau + \int_s^t\intG m_\Ga(\psi)\abs{\Gradg\theta}^2\dGtau \nonumber + \chi(L)\int_s^t\intG (\beta\theta-\mu)^2\dGtau \nonumber \\
    &\quad\leq E_\free(\phi(s),\psi(s)) + \int_s^t\intO \phi\bv\cdot\Grad\mu\dxtau + \int_s^t\intG \psi\bw\cdot\Gradg\theta\dGtau
\end{align}
for a.e. pair $0 < s < t < \infty$. Next, the temporal regularity of $(\bv,\bw,\phi,\psi)$ implies that the mappings
\begin{align*}
    t\mapsto E_\mathrm{kin}(\bv(t),\bw(t),\phi(t),\psi(t)), \qquad t\mapsto E_\free(\phi(t),\psi(t))
\end{align*}
are lower semicontinuous on $[0,\infty)$. Since all time-integral terms in \eqref{EI:DISCR:KIN:AE} and \eqref{EI:DISCR:FREE:AE} depend continuously on their upper integration limit, these inequalities extend from almost every terminal time $t$ to every $t\in[s,\infty)$. We conclude that these energy inequalities hold for all $t\in[s,\infty)$ and a.e.~$s\in(0,\infty)$. Finally, to see that $s = 0$ is admissible, we can simply set $s = 0$ first in \eqref{EI:DISCR:KIN} and \eqref{EI:DISCR:FREE}, and then pass to the limit $N\rightarrow\infty$ in an analogous way, by exploiting the convergence of the initial kinetic and free energy, respectively. We also refer to \cite[Theorem~3.8, 5.7.7]{Abels2025}, where a similar argument was carried out in detail. For the limit in \eqref{EI:DISCR:FREE}, see also \cite[Theorem~3.2]{Knopf2024}. This finishes the proof.
\end{proof}

\subsection{Asymptotic limit \texorpdfstring{$K\rightarrow 0$}{K->0}}
\label{Section:NSCH:K=0}
Lastly, in this section, we finish the proof of Theorem~\ref{NSCH:Theorem:ExistenceWeakSolutions} by proving the existence of a weak solution to \eqref{System:NSCH} in the case $K = 0$.

\begin{theorem}\label{NSCH:Theorem:Existence:K=0}
Suppose that the assumptions \ref{Assumption:NSCH:1}-\ref{Assumption:NSCH:Potentials} hold. Let $L\in[0,\infty]$, $(\bv_0,\bw_0)\in\mathbfcal{L}^2_\Div$, and let $(\phi_0,\psi_0)\in\mathcal{H}^^1$ satisfy $\phi_0 = \alpha\psi_0$ a.e.~on $\Ga$ as well as \eqref{NSCH:cond:init}.
In the case $L = 0$, we additionally assume that
\begin{align}
	\beta(\tilde\sigma_2 - \tilde\sigma_1) = -(\tilde\rho_2 - \tilde\rho_1).
\end{align}
For any $K\in(0,\infty)$, let $(\bv_K,\bw_K,\phi_K,\psi_K,\mu_K,\theta_K)$ denote a weak solution to system \eqref{System:NSCH} in the sense of Definition~\ref{NSCH:Definition:WeakSolution} with initial data $(\bv_0,\bw_0,\phi_0,\psi_0)$, whose existence is guaranteed by Theorem~\ref{NSCH:Theorem:ExistenceWeakSolutions:K>0}. Then, there exists a sextuplet $(\bv_\ast,\bw_\ast,\phi_\ast,\psi_\ast,\mu_\ast,\theta_\ast)$ such that, for any $T > 0$,
	\begin{alignat*}{2}
		(\bv_K,\bw_K) &\rightarrow (\bv_\ast,\bw_\ast) &&\qquad\text{weakly-star in~} L^\infty(0,\infty;\mathbfcal{L}^2), \\
		& &&\qquad\text{weakly in~} L^2(0,\infty;\mathbfcal{H}^1_{0,\Div}), \\
		& &&\qquad\text{strongly in~} L^2(0,T;\mathbfcal{L}^2), \\
		(\phi_K,\psi_K) &\rightarrow (\phi_\ast,\psi_\ast) &&\qquad\text{weakly-star in~} L^\infty(0,\infty;\mathcal{H}^1), \\
		& &&\qquad\text{strongly in~} C([0,T];\mathcal{H}^{1-s}) \text{~for all~}s\in(0,\tfrac12), \\
		(\mu_K,\theta_K) &\rightarrow (\mu_\ast,\theta_\ast) &&\qquad\text{weakly in~} L^2(0,T;\mathcal{H}^1),
	\end{alignat*}
	as $K\rightarrow 0$, along a non-relabeled subsequence, with
	\begin{align}\label{Est:Bc:K->0}
		\norm{\alpha\psi_K - \phi_K}_{L^\infty(0,\infty;L^2(\Ga))} \leq \sqrt{K},
	\end{align}
	and the sextuplet $(\bv_\ast,\bw_\ast,\phi_\ast,\psi_\ast,\mu_\ast,\theta_\ast)$ is a weak solution  to system \eqref{System:NSCH} in the sense of Definition~\ref{NSCH:Definition:WeakSolution} originating from $(\bv_0,\bw_0,\phi_0,\psi_0)$ for $K = 0$. Additionally, it holds that
    \begin{align*}
        (\phi_\ast,\psi_\ast)\in L^2_\uloc([0,\infty);\mathcal{W}^{2,p}), \qquad (F^\prime(\phi_\ast), G^\prime(\psi_\ast))\in L^2_\uloc([0,\infty);\mathcal{L}^p)
    \end{align*}
    for $p = 6$ if $d = 3$ and any $p\in[2,\infty)$ if $d = 2$.
\end{theorem}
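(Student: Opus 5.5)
The plan is to derive bounds uniform in $K\in(0,1]$ from the energy inequality, extract convergent subsequences, and pass to the limit in the weak formulation. The singular chemical-potential relation, with its trace constraint, will be identified via the graph convergence of Proposition~\ref{Prelim:Proposition:GraphConvergence}.

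\textbf{Uniform bounds.} Since $\phi_0=\alpha\psi_0$ on $\Ga$, the coupling term $\chi(K)\intG\frac12(\alpha\psi_0-\phi_0)^2\dG$ in $E_\tot^K(\bv_0,\bw_0,\phi_0,\psi_0)$ vanishes. Hence the initial energy is independent of $K$, and it is finite because $\abs{\phi_0},\abs{\psi_0}\le 1$ and $F,G$ are bounded on $[-1,1]$. The energy inequality \eqref{NSCH:WeakSolution:EnergyInequality} with $s=0$ bounds the following quantities uniformly in $K$:
\begin{itemize}
\item $(\bv_K,\bw_K)$ in $L^\infty(0,\infty;\mathbfcal{L}^2)\cap L^2(0,\infty;\mathbfcal{H}^1)$, using the Korn inequality;
\item $(\Grad\phi_K,\Gradg\psi_K)$ in $L^\infty(0,\infty;\mathbfcal{L}^2)$;
\item $(\Grad\mu_K,\Gradg\theta_K)$ in $L^2(0,\infty;\mathbfcal{L}^2)$, together with $\chi(L)^{1/2}(\beta\theta_K-\mu_K)$;
\item $\frac1{2K}\norm{\alpha\psi_K-\phi_K}_{L^2(\Ga)}^2\le E_\tot(0)-\inf(\dots)$.
\end{itemize}
After normalising constants, the last bound gives \eqref{Est:Bc:K->0} in the form $\norm{\alpha\psi_K-\phi_K}_{L^\infty L^2(\Ga)}\le C\sqrt K$; the stated constant $1$ presumably comes from bounding $2(E_\tot(0)-\min)$ suitably, and otherwise I would read it as $C\sqrt K$. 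Mass conservation and the $L^\infty$ bounds give control of the full $\mathcal{H}^1$ norm of $(\phi_K,\psi_K)$.

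\textbf{Bound on the chemical potentials.} To control $\norm{(\mu_K,\theta_K)}_{\mathcal{H}^1}$ I need the mean $\mean{\mu_K}{\theta_K}$ bounded in $L^2(0,T)$ uniformly in $K$. The Miranville--Zelik argument behind Lemma~\ref{NSCH:Lemma:H^2+Subdiff} must be redone with $K$-independent constants. Testing the chemical-potential equations with $(\phi_K-m,\psi_K-m)$, where $m$ is the conserved mean, the coupling term contributes $\chi(K)\intG(\alpha\psi_K-\phi_K)(\alpha\psi_K-\phi_K-(\alpha-1)m)\dG$. Here $\chi(K)\norm{\alpha\psi_K-\phi_K}^2$ is nonnegative and bounded in terms of the energy, while the cross term $\chi(K)\intG(\alpha\psi_K-\phi_K)\dG\,(\alpha-1)m$ involves $\chi(K)\norm{\alpha\psi_K-\phi_K}_{L^1(\Ga)}=\norm{\deln\phi_K}_{L^1(\Ga)}$. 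This cross term is the main obstacle. I would handle it using the domination property \eqref{Assumption:NSCH:DominationProperty}, which is exactly what the bulk-surface regularity theory \cite[Proposition~5.1]{Giorgini2026} uses to get $K$-uniform estimates, including for $K=0$. The same theory then yields $(\phi_K,\psi_K)$ bounded in $L^2(0,T;\mathcal{W}^{2,p})$ and $(F_0'(\phi_K),G_0'(\psi_K))$ bounded in $L^2(0,T;\mathcal{L}^p)$, uniformly in $K$. I expect the $K$-uniformity of these constants, which the $K>0$ proof did not need, to be the delicate point. Once the mean is controlled, the bulk-surface Poincar\'e inequality gives $(\mu_K,\theta_K)$ bounded in $L^2(0,T;\mathcal{H}^1)$.

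\textbf{Compactness and passage to the limit.} Time-derivative bounds for $(\phi_K,\psi_K)$ in $L^2(0,\infty;(\mathcal{H}^1_{L,\beta})')$ follow by comparison in \eqref{NSCH:WF:PP}, and Aubin--Lions gives strong convergence in $C([0,T];\mathcal{H}^{1-s})$. The strong convergence of the velocities follows exactly as in the proof of Theorem~\ref{NSCH:Theorem:ExistenceWeakSolutions:K>0}: a time-derivative bound for $\mathbfcal P_\Div(\rho\bv,\sigma\bw)$, then convergence of $\int\rho|\bv|^2$, then weak convergence plus convergence of norms.

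The limit $(\phi_\ast,\psi_\ast)$ satisfies $\phi_\ast=\alpha\psi_\ast$ on $\Sigma$ by \eqref{Est:Bc:K->0} and trace continuity. For the chemical potentials, $\partial\widetilde E_\free^{K}(\phi_K,\psi_K)=(\mu_K+c_F\phi_K,\theta_K+c_G\psi_K)$ converges weakly in $L^2(0,T;\mathcal{L}^2)$, while $(\phi_K,\psi_K)$ converges strongly. Proposition~\ref{Prelim:Proposition:GraphConvergence} applies after lifting to $L^2(0,T;\mathcal{L}^2)$: Mosco convergence of the time-integrated functionals follows from Proposition~\ref{Proposition:MoscoConvergence} with a constant recovery sequence. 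It gives $(\phi_\ast,\psi_\ast)\in D(\partial\widetilde E^0_\free)$ and the relations \eqref{NSCH:WeakSolution:MuTheta} with $K=0$; elliptic regularity then yields the $\mathcal{W}^{2,p}$ and $\mathcal{L}^p$ regularity.

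The weak formulations pass to the limit as before, with the $L=0$ boundary term vanishing by the density condition. Mass conservation is preserved. The energy inequality follows from lower semicontinuity, dropping the nonnegative $\chi(K)$ term at time $t$; at almost every $s$ I would additionally need that term to tend to $0$, which I would obtain from energy convergence along a subsequence (or, failing that, first prove it for $s=0$).
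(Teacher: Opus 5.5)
Your plan is essentially the paper's proof: $K$-independent bounds from the energy inequality (the coupling term vanishes at $t=0$), comparison plus Aubin--Lions, the velocity-compactness argument from the case $K>0$, identification of $(\mu_\ast,\theta_\ast)$ via Propositions~\ref{Proposition:MoscoConvergence} and~\ref{Prelim:Proposition:GraphConvergence}, and lower semicontinuity in the energy inequality. The paper merely asserts the $K$-uniform bounds on $(\mu_K,\theta_K)$ and on $(\phi_K,\psi_K)$ in $L^2_\uloc([0,\infty);\mathcal{W}^{2,p})$ ``as in the proof of Theorem~\ref{NSCH:Theorem:ExistenceWeakSolutions:K>0}''; your reading of \eqref{Est:Bc:K->0} as a bound $C\sqrt{K}$ (which is what the argument actually gives) and your lifting of the graph convergence to $L^2(0,T;\mathcal{L}^2)$ are correct refinements.

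For the step you flag as the main obstacle, the domination property is not what controls the cross term. The term disappears altogether if you test with $(\phi_K-\zeta_0,\psi_K-\zeta_{0,\Ga})$ for a fixed smooth pair with $\zeta_0=\alpha\zeta_{0,\Ga}$ on $\Ga$, values in $[-1+\delta,1-\delta]$ for some $\delta>0$, and the conserved mean(s); such a pair exists under \eqref{NSCH:cond:init}, and the boundary terms then combine to $\chi(K)\norm{\alpha\psi_K-\phi_K}_{L^2(\Ga)}^2\geq0$. The means of $(\mu_K,\theta_K)$ then follow by testing with $(\eta,0)$, $\eta\in C_c^\infty(\Om)$, and with $(\alpha,1)$, neither of which produces $\int_\Ga\deln\phi_K\dG$. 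Finally, the uniform $\mathcal{H}^2$ bound gives $\int_0^T\chi(K)\norm{\alpha\psi_K-\phi_K}_{L^2(\Ga)}^2\dt=K\norm{\deln\phi_K}_{L^2(0,T;L^2(\Ga))}^2\leq C_TK$, which settles your concern about the coupling energy at almost every $s$ in the energy inequality.
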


\begin{remark}
\begin{enumerate}[label=\textnormal{(\alph*)},leftmargin=*]
    \item As the right-hand side in \eqref{Est:Bc:K->0} tends to zero as $K\rightarrow 0$, this explains why the Dirichlet type boundary condition $\phi_\ast = \alpha\psi_\ast$ a.e.~on $\Sigma$ appears in the limit model corresponding to $K = 0$. 
	\item The result of Theorem~\ref{NSCH:Theorem:Existence:K=0} remains valid if we replace the initial data $(\phi_0,\psi_0)\in\mathcal{H}^1$ with $\phi_0 = \alpha\psi_0$ a.e.~on $\Ga$ by a sequence $(\phi_{0K},\psi_{0K})\in\mathcal{H}^1$ satisfying
	\begin{align*}
		E_{\mathrm{free}}^K(\phi_{0K},\psi_{0K}) \rightarrow E_{\mathrm{free}}^0(\phi_0,\psi_0) \qquad\text{as~}K\rightarrow 0
	\end{align*}
	for some pair $(\phi_0,\psi_0)\in\mathcal{H}^1$, see also \cite[Theorem~2.3]{Knopf2020}. In this case, one can show that $\phi_0 = \alpha\psi_0$ a.e.~on $\Ga$, and that $(\phi_\ast,\psi_\ast)\vert_{t=0} = (\phi_0,\psi_0)$ a.e.~in $\Om\times\Ga$.
\end{enumerate}
\end{remark}

\begin{proof}
    We consider an arbitrary sequence $\{K_m\}_{m\in\N}\subset(0,\infty)$ such that $K_m\rightarrow0$ as $m\rightarrow\infty$. Let $\{(\bv_{K_m},\bw_{K_m},\phi_{K_m},\psi_{K_m},\mu_{K_m},\theta_{K_m})\}_{m\in\N}$ be a sequence of corresponding weak solutions  
	originating from the initial data $(\bv_0,\bw_0,\phi_0,\psi_0)$. In the following, we use the letter $C$ to denote generic positive constants independent of $K_m$ and $m$. 
    
    Let now $m\in\N$ be arbitrary. We first note that, since the initial datum $(\phi_0,\psi_0)$ satisfies the trace condition $\phi_0 = \alpha\psi_0$ a.e. on $\Ga$, the total energy of the initial data is bounded, i.e.,
    \begin{align*}
    	E_\mathrm{tot}^{K_m}(\bv_0,\bw_0,\phi_0,\psi_0) \leq C,
    \end{align*}
    as the corresponding term containing $K_m$ vanishes. Then, in view of the energy inequality \eqref{NSCH:WeakSolution:EnergyInequality}, we directly obtain that
   	\begin{align}\label{NSCH:K0:Est:Uniform}
   		&\norm{(\bv_{K_m},\bw_{K_m})}_{L^\infty(0,\infty;\mathbfcal{L}^2)} + \norm{(\bv_{K_m},\bw_{K_m})}_{L^2(0,\infty;\mathbfcal{H}^1)} \nonumber \\
   		&\quad + \norm{(\phi_{K_m},\psi_{K_m})}_{L^\infty(0,\infty;\mathcal{H}^1)} + \norm{(\Grad\mu_{K_m},\Gradg\theta_{K_m})}_{L^2(0,\infty;\mathbfcal{L}^2)} \nonumber \\
   		&\quad + \chi(L)^{\frac12}\norm{\beta\theta_{K_m} - \mu_{K_m}}_{L^2(0,\infty;L^2(\Ga))} \leq C,
   	\end{align}
   	as well as
   	\begin{align}\label{NSCH:K0:Est:Uniform:ap-p}
   		\norm{\alpha\psi_{K_m} - \phi_{K_m}}_{L^2(0,\infty;L^2(\Ga))} \leq C\sqrt{K_m}.
   	\end{align}
   	Then, the estimate
   	\begin{align}\label{NSCH:K0:Est:Uniform:High}
   		&\norm{(\phi_{K_m},\psi_{K_m})}_{L^2_{\mathrm{uloc}}([0,\infty);\mathcal{W}^{2,6})} + \norm{(F^\prime(\phi_{K_m}),G^\prime(\psi_{K_m}))}_{L^2_{\mathrm{uloc}}([0,\infty);\mathcal{L}^6)} \nonumber \\
   		&\quad + \norm{(\mu_{K_m},\theta_{K_m})}_{L^2_{\mathrm{uloc}}([0,\infty);\mathcal{L}^2)} \leq C
   	\end{align}
   	can be derived as in the proof of Theorem~\ref{NSCH:Theorem:ExistenceWeakSolutions:K>0}. Moreover, from a comparison argument, we can show, as in the proof of Theorem~\ref{NSCH:Theorem:ExistenceWeakSolutions:K>0}, that
   	\begin{align*}
   		\norm{(\delt\phi_{K_m},\delt\psi_{K_m})}_{L^2(0,\infty;(\mathcal{H}^1_{L,\beta})^\prime)} \leq C.
   	\end{align*}
   	Consequently, by the Banach--Alaoglu theorem and the Aubin--Lions--Simon lemma, there exists a sextuplet $(\bv_\ast,\bw_\ast,\phi_\ast,\psi_\ast,\mu_\ast,\theta_\ast)$ such that, for any $T > 0$,
    \begin{subequations}
	\begin{alignat}{2}
		(\bv_{K_m},\bw_{K_m}) &\rightarrow (\bv_\ast,\bw_\ast) &&\qquad\text{weakly-star in~} L^\infty(0,\infty;\mathbfcal{L}^2), \\
		& &&\qquad\text{weakly in~} L^2(0,\infty;\mathbfcal{H}^1_{0,\Div}), \\
		(\phi_{K_m},\psi_{K_m}) &\rightarrow (\phi_\ast,\psi_\ast) &&\qquad\text{weakly-star in~} L^\infty(0,\infty;\mathcal{H}^1), \\
		& &&\qquad\text{strongly in~} C([0,T];\mathcal{H}^{1-s}) \text{~for all~}s\in(0,\tfrac12), \label{NSCH:K0:Conv:pp:Strong} \\
		(\mu_{K_m},\theta_{K_m}) &\rightarrow (\mu_\ast,\theta_\ast) &&\qquad\text{weakly in~} L^2(0,T;\mathcal{H}^1), \label{NSCH:K0:Conv:MT:Weak}
	\end{alignat}
    \end{subequations}
	as $m\rightarrow\infty$ along a non-relabeled subsequence.
	Additionally, due to \eqref{NSCH:K0:Conv:pp:Strong} and the trace theorem, we infer
	\begin{align*}
		\alpha\psi_{K_m} - \phi_{K_m} \rightarrow \alpha\psi_\ast - \phi_\ast \qquad\text{strongly in~} C([0,T];L^2(\Ga))
	\end{align*}
	as $m\rightarrow\infty$ for any $T > 0$, which, in combination with \eqref{NSCH:K0:Est:Uniform:ap-p}, readily entails $\phi_\ast = \alpha\psi_\ast$ a.e.~on $\Sigma$. Then, as in the proof of Theorem~\ref{NSCH:Theorem:ExistenceWeakSolutions:K>0}, we can show that for any $T > 0$
    \begin{align*}
        (\bv_{K_m},\bw_{K_m})\rightarrow (\bv_\ast,\bw_\ast) \qquad\text{strongly in~} L^2(0,T;\mathbfcal{L}^2)
    \end{align*}
    as $m\rightarrow\infty$ along a non-relabeled subsequence, as well as
    \begin{align*}
        (\bv_\ast,\bw_\ast)\in BC_w([0,\infty);\mathbfcal{L}^2_\Div).
    \end{align*}
    Then, from these convergences, it is straightforward to pass to the limit $m\rightarrow\infty$ in the weak formulations \eqref{NSCH:WF:VW} and \eqref{NSCH:WF:PP}. To pass to the limit in \eqref{NSCH:WeakSolution:MuTheta}, we make use of the Mosco convergence of the convex part of the free energy established in Proposition~\ref{Proposition:MoscoConvergence}. To this end, we first notice that
    \begin{align*}
        (\mu_{K_m}+c_F\phi_{K_m},\theta_{K_m}+c_G\psi_{K_m}) = \partial \widetilde E_\free^{K_m}(\phi_{K_m},\psi_{K_m})
    \end{align*}
    for all $m\in\N$. Then, recalling the convergences \eqref{NSCH:K0:Conv:pp:Strong}-\eqref{NSCH:K0:Conv:MT:Weak}, we have
    \begin{align*}
        (\phi_{K_m},\psi_{K_m})\rightarrow(\phi_\ast,\psi_\ast) \qquad\text{strongly in~} L^2(0,T;\mathcal{L}^2),
    \end{align*}
    as well as
    \begin{align*}
        (\mu_{K_m}+c_F\phi_{K_m},\theta_{K_m}+c_G\psi_{K_m}) \rightarrow (\mu_\ast+c_F\phi_\ast,\theta_\ast+c_G\psi_\ast) \qquad\text{weakly in~}L^2(0,T;\mathcal{L}^2)
    \end{align*}
    as $m\rightarrow\infty$. We are thus in the setting of Proposition~\ref{Prelim:Proposition:GraphConvergence} and deduce that $(\phi_\ast,\psi_\ast)\in D(\partial\widetilde E_\free^0)$ a.e.~on $[0,\infty)$ and
    \begin{align*}
        (\mu_\ast + c_F\phi_\ast,\theta_\ast + c_G\psi_\ast) = \partial\widetilde E_\free^0(\phi_\ast,\psi_\ast).
    \end{align*}
    In particular, it holds
    \begin{alignat*}{2}
        \mu_\ast &= -\Lap\phi_\ast + F^\prime(\phi_\ast) &&\qquad\text{a.e.~in~}Q, \\
        \theta_\ast &= -\Lapg\psi_\ast + G^\prime(\psi_\ast) + \alpha\deln\phi_\ast &&\qquad\text{a.e.~on~}\Sigma, \\
        \phi_\ast &= \alpha\psi_\ast &&\qquad\text{a.e.~on~}\Sigma.
    \end{alignat*}
    With this, we immediately obtain
    \begin{align*}
        (\phi_\ast,\psi_\ast)\in L^2_\uloc([0,\infty);\mathcal{W}^{2,p}), \qquad (F^\prime(\phi_\ast),G^\prime(\psi_\ast))\in L^2_\uloc([0,\infty);\mathcal{L}^p)
    \end{align*}
    for $p = 6$ if $d = 3$ and any $p\in[2,\infty)$ if $d = 2$. Lastly, to go over the limit in the separate energy inequalities \eqref{EnergyInequality:Kinetic} and \eqref{EnergyInequality:Free}, we can simply argue as in the proof of Theorem~\ref{NSCH:Theorem:ExistenceWeakSolutions:K>0}. In this way, we also obtain the energy inequality \eqref{NSCH:WeakSolution:EnergyInequality} for the total energy. This finishes the proof.
\end{proof}

\section{The high friction limit}
\label{Seciton:HighFrictionLimit}
In this last section, we investigate the high friction limit by sending $\gamma\rightarrow\infty$. In the same procedure, we additionally let $K\rightarrow\infty$ and $L\rightarrow\infty$. 

We then have the following theorem.

\begin{theorem}\label{Theorem:HighFriction}
Suppose the assumptions \ref{Assumption:NSCH:1}-\ref{Assumption:NSCH:Potentials} hold. Let $K,L\in(0,\infty)$, $\gamma\in(0,\infty)$, and let initial data $\bv_0\in\mathbf{L}^2_\Div(\Om)$ and $(\phi_0,\psi_0)\in\mathcal{H}^1$ be given such that
\begin{align*}
    \abs{\phi_0}\leq1\quad\text{a.e.~in }\Om, \qquad \abs{\psi_0}\leq1\quad\text{a.e.~on }\Ga,
\end{align*}
and
\begin{align*}
    \meano{\phi_0},\meang{\psi_0}\in(-1,1).
\end{align*}
Consider a weak solution $(\bv_{\gamma,K,L},\bw_{\gamma,K,L},\phi_{\gamma,K,L},\psi_{\gamma,K,L},\mu_{\gamma,K,L},\theta_{\gamma,K,L})$ to system \eqref{System:NSCH} originating from $(\bv_0,\mathbf 0,\phi_0,\psi_0)$, which exists according to Theorem~\ref{NSCH:Theorem:ExistenceWeakSolutions}.
Then, there exist functions $\bv, \phi, \mu, \psi$ and $\theta$ such that, for any $T > 0$, it holds
\begin{subequations}
\begin{alignat}{2}
    (\bv_{\gamma,K,L},\bw_{\gamma,K,L}) &\rightarrow (\bv,\mathbf 0) &&\qquad\text{weakly-star in~}L^\infty(0,\infty;\mathbfcal{L}^2), \\
    & &&\qquad\text{weakly in~}L^2(0,\infty;\mathbfcal{H}^1_{0,\Div}), \\
    & &&\qquad\text{strongly in~} L^2(0,T;\mathbfcal{L}^2), \\
    (\phi_{\gamma,K,L},\psi_{\gamma,K,L}) &\rightarrow (\phi,\psi) &&\qquad\text{weakly-star in~}L^\infty(0,\infty;\mathcal{H}^1), \\
    & &&\qquad\text{strongly in~}C([0,T];\mathcal{H}^{1-s})\ \text{for all~}s\in(0,\tfrac12), \\
    (\mu_{\gamma,K,L},\theta_{\gamma,K,L}) &\rightarrow(\mu,\theta) &&\qquad\text{weakly in~}L^2(0,T;\mathcal{H}^1)
\end{alignat}
\end{subequations}
as $\gamma,K,L\rightarrow\infty$ along a non-relabeled subsequence. Moreover, $(\bv,\phi,\mu)$ is a weak solution to the AGG-model \eqref{System:AGG} corresponding to the initial data $(\bv_0,\phi_0)$ in the sense of \cite{Abels2013}, and $(\psi,\theta)$ solves a surface Cahn--Hilliard equation with initial data $\psi_0$.
    
\end{theorem}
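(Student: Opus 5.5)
We follow the same compactness scheme as in the proof of Theorem~\ref{NSCH:Theorem:Existence:K=0}, now with three parameters sent to infinity simultaneously. We interpret $\gamma\to\infty$ as the friction coefficient being replaced by $\gamma\,\gamma_0(\phi,\psi)$ with $\gamma_0$ as in \ref{Assumption:NSCH:Coefficients}. We also take $\gamma,K,L$ as sequences $\gamma_m,K_m,L_m\to\infty$.

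\emph{Uniform bounds.} Since $\bw_0=\mathbf 0$ and $\chi(K_m)\intG\frac12(\alpha\psi_0-\phi_0)^2\dG\le C/K_m\le C$, the initial total energy is bounded uniformly in $m$. The energy inequality \eqref{NSCH:WeakSolution:EnergyInequality} then yields bounds, independent of $m$, on $(\bv_m,\bw_m)$ in $L^\infty(0,\infty;\mathbfcal{L}^2)\cap L^2(0,\infty;\mathbfcal{H}^1)$ (via the bulk-surface Korn inequality) and on $(\phi_m,\psi_m)$ in $L^\infty(0,\infty;\mathcal{H}^1)$. It also bounds $(\Grad\mu_m,\Gradg\theta_m)$ in $L^2(0,\infty;\mathbfcal{L}^2)$. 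Crucially, it gives
\begin{align*}
\gamma_m\norm{\bw_m}_{L^2(0,\infty;\mathbf L^2(\Ga))}^2\le C,\qquad \tfrac1{L_m}\norm{\beta\theta_m-\mu_m}^2_{L^2(0,\infty;L^2(\Ga))}\le C,
\end{align*}
so $\bw_m\to\mathbf 0$ strongly in $L^2(\Sigma)$.

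\emph{Mean values.} The means of $\mu_m,\theta_m$ must be controlled separately, since for $L_m<\infty$ only the combined mean $\mean{\mu_m}{\theta_m}$ is available from Lemma~\ref{NSCH:Lemma:H^2+Subdiff}. The constants $C_p$ there depend on the norm $\norm{\cdot}_{L,\beta}$ and on $\chi(L)$. I would redo the Miranville--Zelik argument separately in bulk and on the surface, testing with $(\phi_m-\meano{\phi_m},0)$ and $(0,\psi_m-\meang{\psi_m})$. This should give $\abs{\meano{\mu_m}}+\abs{\meang{\theta_m}}\le C(1+\norm{\Grad\mu_m}+\norm{\Gradg\theta_m})$, using that $\chi(K_m)\intG(\alpha\psi_m-\phi_m)\,\cdot\,\dG$ is controlled by $K_m^{-1/2}$ times energy. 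We need strict separation of the individual means from $\pm1$. This is the main obstacle: for $L_m<\infty$ only $\beta\int\phi+\int\psi$ is conserved, so the individual means may drift. I would control the drift through the weak formulation \eqref{NSCH:WF:PP} tested with $(1,0)$, which gives
\begin{align*}
\abs{\Om}\frac{\mathrm d}{\mathrm dt}\meano{\phi_m}=\chi(L_m)\intG(\beta\theta_m-\mu_m)\dG .
\end{align*}
The right-hand side is $O(L_m^{-1/2})$ in $L^2(0,\infty)$, so on each $[0,T]$ the means stay within $CT^{1/2}L_m^{-1/2}$ of $\meano{\phi_0}\in(-1,1)$. This yields separation for $m$ large, with constants depending on $T$, which is enough for the $L^2(0,T)$ statements. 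With these bounds, Lemma~\ref{NSCH:Lemma:H^2+Subdiff}-type regularity gives $(\mu_m,\theta_m)$ bounded in $L^2(0,T;\mathcal{H}^1)$ and $(\phi_m,\psi_m)$ bounded in $L^2(0,T;\mathcal{H}^2)$.

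\emph{Compactness and passage to the limit.} Comparison in \eqref{NSCH:WF:PP}, with test functions in $\mathcal{H}^1$, bounds $(\delt\phi_m,\delt\psi_m)$ in $L^2(0,T;(\mathcal{H}^1)')$. Aubin--Lions--Simon then gives the asserted strong convergence in $C([0,T];\mathcal{H}^{1-s})$, and the velocity argument of Theorem~\ref{NSCH:Theorem:ExistenceWeakSolutions:K>0} gives strong $L^2(0,T;\mathbfcal{L}^2)$ convergence with limit $(\bv,\mathbf 0)$. In the momentum equation \eqref{NSCH:WF:VW} we first choose test functions $(\wv,\mathbf 0)$ with $\wv\in C_c^\infty$ compactly supported in $\Om$, i.e.\ with vanishing trace. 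All surface terms then drop, and the bulk terms pass to the limit exactly as before. The boundary term $\chi(L_m)\int(\beta\theta_m-\mu_m)\bw_m\cdot\ww$ is $O(L_m^{-1/2})$ in any case. Since $\bv|_\Ga=\bw=\mathbf 0$, the limit $\bv$ lies in $L^2(0,\infty;\mathbf H^1_0)$. A density argument then recovers the AGG weak formulation of \cite{Abels2013}. In \eqref{NSCH:WF:PP}, testing with $(\zeta,0)$ and $(0,\zeta_\Ga)$ separately, the coupling term $\chi(L_m)\int(\beta\theta_m-\mu_m)\zeta$ is $O(L_m^{-1/2})\to0$, and the convective term $\psi_m\bw_m$ vanishes. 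This yields a convective bulk Cahn--Hilliard equation with no-flux condition and a non-convective surface Cahn--Hilliard equation.

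\emph{Chemical potentials and energy inequality.} Finally, $(\mu_m+c_F\phi_m,\theta_m+c_G\psi_m)=\partial\widetilde E^{K_m}_\free(\phi_m,\psi_m)$ converges weakly in $L^2(0,T;\mathcal{L}^2)$ while $(\phi_m,\psi_m)$ converges strongly. Proposition~\ref{Prelim:Proposition:GraphConvergence} with $K=\infty$ then gives $\mu=-\Lap\phi+F'(\phi)$ with $\deln\phi=0$ and $\theta=-\Lapg\psi+G'(\psi)$, the term $\alpha\deln\phi$ vanishing. The energy inequality follows by weak lower semicontinuity as in the previous proofs, dropping the nonnegative surface dissipation terms.
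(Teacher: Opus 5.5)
Most of your plan follows the paper's proof: energy bounds with $E_\tot^{K_m}(\bv_0,\mathbf 0,\phi_0,\psi_0)\le C$, the friction bound $\norm{\bw_m}_{L^2(0,\infty;\mathbf L^2(\Ga))}\le C\gamma_m^{-1/2}$, Aubin--Lions--Simon, zero-trace test functions $(\wv,\mathbf 0)$, separate testing with $(\zeta,0)$ and $(0,\zeta_\Ga)$, and graph convergence (Proposition~\ref{Prelim:Proposition:GraphConvergence}) with $K=\infty$.

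\textbf{Mean values.} Here you are more careful than the paper. The paper only says the bound on $(\mu_m,\theta_m)$ follows as in Theorem~\ref{NSCH:Theorem:ExistenceWeakSolutions:K>0}. That argument controls only $\mean{\mu_m}{\theta_m}$ and then uses the Poincar\'e inequality for $\norm{\cdot}_{L_m,\beta}$. Its constant grows like $\sqrt{L_m}$, as one sees on constant pairs with zero bulk-surface mean. Meanwhile the dissipation only gives $\norm{\beta\theta_m-\mu_m}_{L^2(0,\infty;L^2(\Ga))}\le C\sqrt{L_m}$. Your drift bound $\abs{\meano{\phi_m}(t)-\meano{\phi_0}}+\abs{\meang{\psi_m}(t)-\meang{\psi_0}}\le C\sqrt{t/L_m}$, combined with separate Miranville--Zelik estimates using $\norm{\deln\phi_m}_{L^2(\Ga)}\le CK_m^{-1/2}$, is correct. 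It uses exactly the hypothesis $\meano{\phi_0},\meang{\psi_0}\in(-1,1)$. It gives bounds only on $[0,T]$ for $m\ge m_0(T)$, so you need a diagonal argument. The $L^2_\uloc$-regularity of $\mu$ required in \cite{Abels2013} must then come from the limit system, where $\meano{\phi}$ is conserved.

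\textbf{The gap: the final energy inequality.} Set $E_\Om(\phi)\coloneqq\intO\frac12\abs{\Grad\phi}^2+F(\phi)\dx$, $E_{\mathrm{AGG}}(\bv,\phi)\coloneqq\intO\frac12\rho(\phi)\abs{\bv}^2\dx+E_\Om(\phi)$, and $E_\Ga(\psi)\coloneqq\intG\frac12\abs{\Gradg\psi}^2+G(\psi)\dG$. Lower semicontinuity in the total (or separate free) energy inequality only yields
\begin{align*}
E_{\mathrm{AGG}}(\bv(t),\phi(t))+\int_s^t D_\Om\dtau\le E_{\mathrm{AGG}}(\bv(s),\phi(s))+\Big[E_\Ga(\psi(s))-E_\Ga(\psi(t))-\int_s^t\intG m_\Ga(\psi)\abs{\Gradg\theta}^2\dGtau\Big],
\end{align*}
where $D_\Om$ is the bulk dissipation. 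You must show the bracket is $\le0$, and lower semicontinuity cannot do that. Dropping the surface Cahn--Hilliard dissipation makes it worse, since $E_\Ga(\psi(s))-E_\Ga(\psi(t))\ge0$ remains. You need an energy \emph{identity} for one of the decoupled limit subsystems.

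The paper passes to the limit only in the separate kinetic inequality \eqref{EnergyInequality:Kinetic}. There, dropping the surface viscous and friction terms is legitimate, and the surface kinetic energy and $\int_s^t\intG\psi_m\bw_m\cdot\Gradg\theta_m\dGtau$ vanish. It then gets $E_\Om(\phi(t))+\int_s^t\intO m_\Om(\phi)\abs{\Grad\mu}^2\dxtau=E_\Om(\phi(s))+\int_s^t\intO\phi\bv\cdot\Grad\mu\dxtau$ from the chain rule, and adds the two. Alternatively, you could cancel the bracket with the chain-rule identity for the limiting surface Cahn--Hilliard equation.

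\textbf{A point shared with the paper.} ``The velocity argument of Theorem~\ref{NSCH:Theorem:ExistenceWeakSolutions:K>0}'' does not apply verbatim. In the comparison estimate for $\delt\mathbfcal{P}_\Div(\rho_m\bv_m,\sigma_m\bw_m)$, the friction term $\gamma_m\intG\gamma_0(\phi_m,\psi_m)\bw_m\cdot\ww\dG$ is only $O(\sqrt{\gamma_m})$ in $L^2(0,T)$. Instead, run Aubin--Lions--Simon for $\mathbf P^\Om_\Div(\rho_m\bv_m)$ with zero-trace test functions $(\wv,\mathbf 0)$ only. This suffices for the norm-convergence trick, since $\bv_m\in\mathbf L^2_\Div(\Om)$. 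Then obtain $\bw_m\to\mathbf 0$ directly from the friction bound.
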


\begin{proof}
Consider $(K_m)_{m\in\N}$, $(L_m)_{m\in\N}\subset(0,\infty)$ and $(\gamma_m)_{m\in\N}\subset(0,\infty)$ with $K_m, L_m, \gamma_m \rightarrow\infty$ as $m\rightarrow\infty$. Without loss of generality we assume that $K_m, L_m, \gamma_m \geq 1$. In the following, we denote with the letter $C$ positive constants that are independent of $K_m, L_m, \gamma_m$ and $m$.

For any $m\in\N$ let $(\bv_m,\bw_m,\phi_m,\psi_m,\mu_m,\theta_m)$ be a corresponding weak solution originating from the initial data $(\bv_{0,m},\bw_{0,m}) = (\bv_0,\mathbf 0)$ and $(\phi_{0,m},\psi_{0,m}) = (\phi_0,\psi_0)$ which exists according to Theorem~\ref{NSCH:Theorem:ExistenceWeakSolutions}. The energy inequality \eqref{NSCH:WeakSolution:EnergyInequality} then reads as
\begin{align}\label{HighFriction:Energy}
    &E_\tot^{K_m}(\bv_m(t),\bw_m(t),\phi_m(t),\psi_m(t)) \nonumber \\
    &\qquad + \int_0^t\intO 2\nu_\Om(\phi_m)\abs{\D\bv_m}^2\dxs + \int_0^t\intG 2\nu_\Ga(\psi_m)\abs{\Dg\bw_m}^2\dGs + \int_0^t\intG \gamma_m\abs{\bw_m}^2\dGs \nonumber \\
    &\qquad + \int_0^t\intO m_\Om(\phi_m)\abs{\Grad\mu_m}^2\dxs + \int_0^t\intG m_\Ga(\psi_m)\abs{\Gradg\theta_m}^2\dGs \nonumber \\
    &\qquad + \frac{1}{L_m}\int_0^t\intG (\beta\theta_m-\mu_m)^2\dGs \nonumber \\
    &\quad \leq E_\tot^{K_m}(\bv_0,\mathbf 0,\phi_0,\psi_0)
\end{align}
for all $t\geq 0$. In view of the regularity assumptions on the initial data, and as $K_m\geq 1$, we find a constant $C > 0$ such that
\begin{align}\label{HighFriction:ICEnergy}
    E_\tot^{K_m}(\bv_0,\mathbf 0,\phi_0,\psi_0) \leq C\big(1 + K_m^{-1}\big) \leq C
\end{align}
for all $m\in\N$. Then, since $\abs{\phi_m}\leq 1$ a.e. in $Q$ and $\abs{\psi_m} \leq 1$ a.e. on $\Sigma$, we employ the bulk-surface Korn inequality and readily obtain from \eqref{HighFriction:Energy} and \eqref{HighFriction:ICEnergy} that
\begin{align}\label{Asymp:EnergyInequality}
    &\norm{(\bv_m,\bw_m)}_{L^\infty(0,\infty;\mathbfcal{L}^2)} + \norm{(\bv_m,\bw_m)}_{L^2(0,\infty;\mathbfcal{H}^1)} \nonumber \\
    &\quad + \norm{(\phi_m,\psi_m)}_{L^\infty(0,\infty;\mathcal{H}^1)} + \norm{(\Grad\mu_m,\Gradg\theta_m)}_{L^2(0,\infty;\mathbfcal{L}^2)} \leq C.
\end{align}
Moreover, we have
\begin{align}
    \norm{\bw_m}_{L^2(0,\infty;\mathbf L^2(\Ga))} &\leq \frac{C}{\sqrt{\gamma_m}} \label{Asymp:w:gamma}, \\
    \frac{1}{L_m}\norm{\beta\theta_m-\mu_m}_{L^2(0,\infty;L^2(\Ga))} &\leq \frac{C}{\sqrt{L_m}}. \label{Asymp:mt:boundary}
\end{align}
Then, the estimate
\begin{align}\label{NSCH:GKL0:Est:Uniform:High}
    \begin{split}
        \norm{(\mu_m,\theta_m)}_{L^2_{\mathrm{uloc}}([0,\infty);\mathcal{L}^2)} \leq C
    \end{split}
\end{align}
can be derived as in the proof of Theorem~\ref{NSCH:Theorem:ExistenceWeakSolutions:K>0}. Moreover, from a comparison argument, we can show, as in the proof of Theorem~\ref{NSCH:Theorem:ExistenceWeakSolutions:K>0}, that
\begin{align}\label{Asymp:pp:delt}
    \norm{(\delt\phi_m,\delt\psi_m)}_{L^2(0,\infty;(\mathcal{H}^1_{L,\beta})^\prime)} \leq C.
\end{align}
In view of the uniform estimates \eqref{Asymp:EnergyInequality}, \eqref{NSCH:GKL0:Est:Uniform:High} and \eqref{Asymp:pp:delt}, we infer from the Banach--Alaoglu theorem and the Aubin--Lions--Simon lemma the existence of functions $(\bv,\bw,\phi,\psi,\mu,\theta)$, such that for any $T > 0$,
\begin{subequations}
    \begin{alignat}{2}
        (\bv_m,\bw_m) &\rightarrow (\bv,\bw) &&\qquad\text{weakly-star in~}L^\infty(0,\infty;\mathbfcal{L}^2), \label{HF:Conv:VW} \\
        & &&\qquad\text{weakly in~}L^2(0,\infty;\mathbfcal{H}^1_{0,\Div}), \label{HF:Conv:vw:L2H1}\\
        (\phi_m,\psi_m) &\rightarrow (\phi,\psi) &&\qquad\text{weakly-star in~}L^\infty(0,\infty;\mathcal{H}^1), \\
        & &&\qquad\text{strongly in~} C([0,T];\mathcal{H}^{1-s}) \ \text{for all~}s\in(0,\tfrac12), \label{HF:Conv:pp:Strong} \\
        (\mu_m,\theta_m) &\rightarrow(\mu,\theta) &&\qquad\text{weakly in~}L^2(0,T;\mathcal{H}^1) \label{HF:CONV:MT:WEAK}
    \end{alignat}
\end{subequations}
along a non-relabeled subsequence $m\rightarrow\infty$. Then, similarly to the proof of Theorem~\ref{NSCH:Theorem:ExistenceWeakSolutions:K>0}, we can show that for any $T > 0$
\begin{align}\label{HF:Conv:vw:Strong}
    (\bv_m,\bw_m) \rightarrow (\bv,\bw) \qquad\text{strongly in~}L^2(0,T;\mathbfcal{L}^2)
\end{align}
along a non-relabeled subsequence $m\rightarrow\infty$, and
\begin{align*}
    (\bv,\bw)\in BC_w([0,\infty);\mathbfcal{L}^2_\Div).
\end{align*}
Thus, we immediately infer from \eqref{Asymp:w:gamma} that $\bw = \mathbf 0$ a.e.~on $\Sigma$ and therefore $\bv\vert_\Ga = \mathbf 0$ a.e. on $\Sigma$. Next, we go over to the limit in the weak formulation. To this end let $\wv\in\mathbf C_{0,\sigma}^\infty(\Om\times(0,\infty))$ and take $(\wv,\mathbf 0)$ as a test function in \eqref{NSCH:WF:VW}. Then the weak formulation reads as
\begin{align*}
    &-\int_0^\infty\intO\rho(\phi_m)\bv_m\cdot\delt\wv\dxt - \int_0^\infty\intO \Div(\rho(\phi_m)\bv_m\otimes\bv_m)\cdot\wv\dxt \\
    &\qquad + \int_0^\infty\intO (\bv_m\otimes\J_m):\Grad\wv\dxt + \int_0^\infty\intO 2\nu_\Om(\phi_m)\D\bv_m:\D\wv\dxt \\
    &\quad = \int_0^\infty\intO \mu_m\Grad\phi_m\cdot\wv\dxt
\end{align*}
where $\J_m = -\frac{\tilde\rho_1-\tilde\rho_2}{2}m_\Om(\phi_m)\Grad\mu_m$. Letting $m\rightarrow\infty$, we readily infer from \eqref{HF:Conv:VW}-\eqref{HF:CONV:MT:WEAK} and \eqref{HF:Conv:vw:Strong} that
\begin{align*}
    &-\int_0^\infty\intO\rho(\phi)\bv\cdot\delt\wv\dxt - \int_0^\infty\intO\Div(\rho(\phi)\bv\otimes\bv)\cdot\wv\dxt \\
    &\qquad + \int_0^\infty\intO (\bv\otimes\J):\Grad\wv\dxt + \int_0^\infty\intO 2\nu_\Om(\phi)\D\bv:\D\wv\dxt \\
    &\quad = \int_0^\infty\intO \mu\Grad\phi\cdot\wv\dxt
\end{align*}
for all $\wv\in C_{0,\sigma}^\infty(\Om\times(0,\infty))$. Next, we have
\begin{align}\label{HiFr:WF:PP:m}
    \begin{split}
    &-\int_0^\infty\intO \phi_m\,\delt\zeta\dxt -\int_0^\infty\intG \psi_m\,\delt\zeta_\Ga\dGt \\
    &\qquad - \int_0^\infty\intO\phi_m\bv_m\cdot\Grad\zeta\dxt - \int_0^\infty\intG\psi_m\bw_m\cdot\Gradg\zeta_\Ga\dGt \\
    &\quad = -\int_0^\infty\intO m_\Om(\phi_m)\Grad\mu_m\cdot\Grad\zeta\dxt - \int_0^\infty\intG m_\Ga(\psi_m)\Gradg\theta_m\cdot\Gradg\zeta_\Ga\dGt \\
    &\qquad - \frac{1}{L_m}\int_0^\infty\intG (\beta\theta_m-\mu_m)(\beta\zeta_\Ga-\zeta)\dGt
    \end{split}
\end{align}
for all $(\zeta,\zeta_\Ga)\in C_c^\infty(0,\infty;\mathcal{H}^1)$, where the chemical potentials are characterized by
\begin{align}\label{HF:MT:M:SUBDIFF}
    (\mu_m+c_F\phi_m,\theta_m+c_G\psi_m) = \partial\widetilde E_\free^{K_m}(\phi_m,\psi_m).
\end{align}
Arguing again as in the proof of Theorem~\ref{NSCH:Theorem:ExistenceWeakSolutions:K>0}, we can let $m\rightarrow\infty$ in \eqref{HiFr:WF:PP:m} and readily deduce that
\begin{align*}
    &-\int_0^\infty\intO \phi\,\delt\zeta\dxt -\int_0^\infty\intG \psi\,\delt\zeta_\Ga\dGt - \int_0^\infty\intO\phi\bv\cdot\Grad\zeta\dxt \\
    &\quad = -\int_0^\infty\intO m_\Om(\phi)\Grad\mu\cdot\Grad\zeta\dxt - \int_0^\infty\intG m_\Ga(\psi)\Gradg\theta\cdot\Gradg\zeta_\Ga\dGt 
\end{align*}
for all $(\zeta,\zeta_\Ga)\in C_c^\infty(0,\infty;\mathcal{H}^1)$. Here, we have additionally used that the limiting surface velocity field satisfies $\bw = \mathbf 0$ a.e.~on $\Sigma$ and the convergence \eqref{Asymp:mt:boundary}. In particular, choosing $(\zeta,0)$ and $(0,\zeta_\Ga)$ for $\zeta\in C_c^\infty(0,\infty;H^1(\Om))$ and $\zeta_\Ga\in C_c^\infty(0,\infty;H^1(\Ga))$ as test functions, respectively, we get
\begin{align*}
    &-\int_0^\infty\intO \phi\,\delt\zeta\dxt  - \int_0^\infty\intO\phi\bv\cdot\Grad\zeta\dxt = -\int_0^\infty\intO m_\Om(\phi)\Grad\mu\cdot\Grad\zeta\dxt
\end{align*}
as well as
\begin{align*}
    -\int_0^\infty\intG \psi\,\delt\zeta_\Ga\dGt = - \int_0^\infty\intG m_\Ga(\psi)\Gradg\theta\cdot\Gradg\zeta_\Ga\dGt.
\end{align*}
For the chemical potentials we can argue as in the proof of Theorem~\ref{NSCH:Theorem:Existence:K=0} and obtain $(\phi,\psi)\in D(\partial\widetilde E_\free^\infty)$ with
\begin{align*}
    (\mu + c_F\phi,\theta + c_G\psi) = \partial \widetilde E_\free^\infty(\phi,\psi).
\end{align*}
In particular, it holds
\begin{alignat*}{2}
    \mu &= -\Lap\phi + F^\prime(\phi) &&\qquad\text{a.e.~in~}Q, \\
    \deln\phi &= 0 &&\qquad\text{a.e.~on~}\Sigma,
\end{alignat*}
and
\begin{align*}
    \theta = -\Lapg\psi + G^\prime(\psi) \qquad\text{a.e.~on~}\Sigma.
\end{align*} 
This already proves that $(\psi,\theta)$ is a weak solution to the surface Cahn--Hilliard equation with initial data $\psi_0$.
To show that $(\bv,\phi,\mu)$ is a weak solution to the AGG-model \eqref{System:AGG}, it remains to show the validity of the energy inequality. To do so, we rely on the fact that the separate energy inequalities \eqref{EI:DISCR:KIN} and \eqref{EI:DISCR:FREE} for the kinetic and free energy hold. First, from \eqref{EI:DISCR:KIN}, we know that
\begin{align*}%\label{KS:EnergyInequality:Kinetic}
    &\intO\frac12\rho(\phi_m(t))\abs{\bv_m(t)}^2\dx + \intG\frac12\sigma(\psi_m(t))\abs{\bw_m(t)}^2\dG \nonumber \\
    &\qquad + \int_s^t\intO 2\nu_\Om(\phi_m)\abs{\D\bv_m}^2\dxtau + \int_s^t\intG 2\nu_\Ga(\psi_m)\abs{\Dg\bw_m}^2\dGtau \nonumber \\
    &\qquad + \int_s^t\intG \gamma(\phi_m,\psi_m)\abs{\bw_m}^2\dGtau \nonumber \\
    &\quad\leq \intO\frac12\rho(\phi_m(s))\abs{\bv_m(s)}^2\dx + \intG\frac12\sigma(\psi_m(s))\abs{\bw_m(s))}^2\dG - \int_s^t\intO \phi_m\bv_m\cdot\Grad\mu_m\dxtau \nonumber \\
    &\qquad - \int_s^t\intG \psi_m\bw_m\cdot\Gradg\theta_m\dGtau
\end{align*}
for all $t\in[s,\infty)$ and a.e. $s\in[0,\infty)$ including $s = 0$. To pass to the limit, we proceed as in the proof of Theorem~\ref{NSCH:Theorem:ExistenceWeakSolutions:K>0}. The only differences concern the surface contributions. Indeed, after passing to a subsequence,
\begin{align*}
    \bw_m(s) \rightarrow \mathbf 0 \qquad\text{strongly in~}\mathbf{L}^2(\Ga)
\end{align*}
for a.e. $s\in(0,\infty)$, and hence the surface kinetic energy converges to zero for a.e. $s\in(0,\infty)$. Moreover, the surface convective coupling term converges to zero, whereas the non-negative surface dissipation terms may be discarded when taking the the limes inferior. Consequently,
\begin{align*}
    &\intO\frac12\rho(\phi(t))\abs{\bv(t)}^2\dx + \int_s^t\intO 2\nu_\Om(\phi)\abs{\D\bv}^2\dxtau \\
    &\quad\leq \intO\frac12\rho(\phi(s))\abs{\bv(s)}^2\dx - \int_s^t\intO \phi\bv\cdot\Grad\mu\dxtau 
\end{align*}
for a.e. $0 < s < t < \infty$. As in the proof of Theorem~\ref{NSCH:Theorem:ExistenceWeakSolutions:K>0}, the lower semicontinuity in time of the limiting energies and the continuity of the time integral terms extend this inequality from a.e. terminal time $t$ to every $t\in[s,\infty)$. The case $s = 0$ follows separately as $(\bv_m(0),\bw_m(0)) = (\bv_0,\mathbf 0)$ a.e.~in $\Om\times\Ga$ and $\phi_m(0) = \phi_0$ a.e.~in $\Om$.
Concerning the free energy, an application of the chain rule in $L^2(0,T;H^1(\Om))\cap H^1(0,T;H^1(\Om)^\prime)$ shows that
\begin{align*}
    E_\Om(\phi(t)) + \int_s^t\intO m_\Om(\phi)\abs{\Grad\mu}^2\dxtau = E_\Om(\phi(s)) + \int_s^t\intO \phi\bv\cdot\Grad\mu\dxtau
\end{align*}
for all $t\in[s,\infty)$ and a.e.~$s\in[0,\infty)$ including $s = 0$. In particular, we obtain the corresponding energy inequality for the total energy, which entails that $(\bv,\phi,\mu)$ is a weak solution to the Abels--Garcke--Grün model \eqref{System:AGG} in the sense of \cite[Definition~3.3]{Abels2013}.
\end{proof}

\section*{Appendix}

\renewcommand\thesection{A}
\setcounter{theorem}{0}
\setcounter{equation}{0}

In this appendix, we collect some useful results that were used throughout this manuscript. We start with the following lemma, whose proof can be found in \cite[Lemma~4.3]{Abels2009}.

\begin{lemma}\label{NSCH:Prelim:Lemma:Energy}
    Let $E:[0,T)\rightarrow[0,\infty)$, $0 < T \leq \infty$, be a lower semicontinuous function and let $D:(0,T)\rightarrow[0,\infty)$ be an integrable function. Then
    \begin{align*}
        E(0)\pi(0) + \int_0^T E(t)\pi^\prime(t)\dt \geq \int_0^T D(t)\pi(t)\dt
    \end{align*}
    holds for all $\pi\in W^{1,1}(0,T)$ with $\pi \geq 0$ and $\pi(T) = 0$ if and only if
    \begin{align*}
        E(t) + \int_s^t D(\tau)\dtau \leq E(s)
    \end{align*}
    holds for all $t\in[s,T)$ and almost all $s\in[0,T)$ including $s = 0$.
\end{lemma}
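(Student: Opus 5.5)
The plan is to prove the two implications separately, following the standard argument from \cite[Lemma~4.3]{Abels2009}. Write $\mathcal{I}(\pi) \coloneqq E(0)\pi(0) + \int_0^T E(t)\pi'(t)\dt - \int_0^T D(t)\pi(t)\dt$. The integral of $E\pi'$ is finite: either because we assume it for the test functions used, or because in the reverse direction the pointwise inequality gives $E(t)\le E(0)$, so $E$ is bounded.

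\emph{($\Leftarrow$)} Assume $E(t)+\int_s^t D\dtau\le E(s)$ for all $t\ge s$ and all $s\in[0,T)\setminus N$, where $|N|=0$ and $0\notin N$. Fix $\pi\in W^{1,1}(0,T)$ with $\pi\ge0$ and $\pi(T)=0$.

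1. Use the layer-cake representation $\pi(t)=\int_t^T(-\pi'(s))\,ds$. Since $\pi'$ has no sign, I do not split $\pi$ this way. Instead, I integrate by parts against the nonincreasing function $G(t)\coloneqq E(t)+\int_0^t D\dtau$.
2. The hypothesis says $G(t)\le G(s)$ for $s\notin N$, $t\ge s$. So $G$ is "a.e.-nonincreasing," and since $0\notin N$, also $G\le G(0)=E(0)$.
3. It then suffices to show $G(0)\pi(0)+\int_0^T G\pi'\dt\ge0$. This identity is equivalent to the target after Fubini: $\int_0^T(\int_0^tD)\pi'\dt=-\int_0^T D\pi\dt$, using $\pi(T)=0$.
4. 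For a nonincreasing $G$ this is the Stieltjes inequality $\int_0^T \pi\, d(-G)\ge0$.
5. To avoid Stieltjes measures with an "a.e." monotone $G$, I approximate $\pi$ by piecewise linear functions or by mollified step functions. Concretely, write $\pi = \int_0^\infty \mathbf 1_{\{\pi>\lambda\}}\,d\lambda$ and reduce to checking $G(0)\mathbf 1_{[0,b)}(0)-\frac1\varepsilon\int_{b-\varepsilon}^{b}G\,dt\ge0$-type expressions. These are nonnegative because averages of $G$ over intervals to the right of a good point are bounded by the value at that point.

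\emph{($\Rightarrow$)} Assume the integral inequality for all admissible $\pi$.

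1. Fix $0\le s<t<T$ and small $\varepsilon>0$. Take $\pi_\varepsilon$ piecewise linear: equal to $0$ on $[0,s]$, rising linearly to $1$ on $[s,s+\varepsilon]$, equal to $1$ on $[s+\varepsilon,t]$, decreasing linearly to $0$ on $[t,t+\varepsilon]$, and $0$ afterwards. For $s=0$, take $\pi_\varepsilon=1$ on $[0,t]$ instead, so that the $E(0)\pi(0)$ term appears.
2. The inequality becomes
\begin{align*}
\frac1\varepsilon\int_s^{s+\varepsilon}E\dtau-\frac1\varepsilon\int_t^{t+\varepsilon}E\dtau\ge\int_0^T D\pi_\varepsilon\dtau,
\end{align*}
with the first term replaced by $E(0)$ when $s=0$.
3. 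Let $\varepsilon\to0$. Dominated convergence makes the right-hand side converge to $\int_s^tD\dtau$.
4. For the term at $t$, lower semicontinuity gives $\liminf_{\varepsilon\to0}\frac1\varepsilon\int_t^{t+\varepsilon}E\ge E(t)$. This holds for every $t$, which is what yields the statement for \emph{all} $t$.
5. For the term at $s$, the Lebesgue differentiation theorem gives convergence to $E(s)$ for a.e. $s$, namely at Lebesgue points of $E\in L^1_{\rm loc}$.

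The main obstacle is local integrability of $E$ in the ($\Rightarrow$) direction, which is needed both for Lebesgue points and for the integral to make sense. I would obtain it first: apply the $s=0$ case with the test function above to get $\frac1\varepsilon\int_t^{t+\varepsilon}E\le E(0)$ for all $t$, and hence $E\in L^\infty$ locally, before running the general-$s$ argument.
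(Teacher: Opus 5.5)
The paper does not prove this lemma; it quotes it from \cite[Lemma~4.3]{Abels2009}. Your argument therefore has to stand on its own.

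Your ($\Rightarrow$) direction is correct, including these points:
\begin{itemize}
\item the trapezoidal test functions;
\item lower semicontinuity to control the right averages at \emph{every} $t$;
\item Lebesgue points to handle a.e.\ $s$;
\item obtaining integrability of $E$ first. For the $s=0$ test function you have $\pi_\varepsilon'\le0$ and $E\ge0$, so $\int_0^T E\pi_\varepsilon'\,dt\in[-\infty,0]$ is defined, and the inequality forces $\frac1\varepsilon\int_t^{t+\varepsilon}E\,d\tau\le E(0)$.
\end{itemize}
In ($\Leftarrow$), the reduction to $G(0)\pi(0)+\int_0^T G\pi'\,dt\ge0$ with $G(t)=E(t)+\int_0^t D\,d\tau$ is also correct. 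Moreover, $G$ is bounded since $0\le E\le E(0)$.

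The gap is step 5, which carries the whole content of ($\Leftarrow$) and is not an argument as written. The layer-cake decomposition $\pi=\int_0^\infty\mathbf 1_{\{\pi>\lambda\}}\,d\lambda$ fails here for three reasons:
\begin{itemize}
\item the indicators it produces are not admissible test functions;
\item superlevel sets of a general $W^{1,1}$ function can have infinitely many components;
\item you give no identity or convergent approximation linking $\int_0^T G\pi'\,dt$ to the displayed trapezoid expressions.
\end{itemize}
Also, for components $(a,b)$ with $a>0$, the relevant quantity is a difference of two averages, $\frac1\varepsilon\int_a^{a+\varepsilon}G-\frac1\varepsilon\int_{b-\varepsilon}^bG$, not a comparison with $G(0)$.

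The cleanest repair is to note that the ``a.e.'' monotonicity you wanted to avoid is harmless. Define $\tilde G(t)\coloneqq\inf\{G(\sigma):\sigma\in[0,t]\setminus N\}$. It is nonincreasing and bounded. It satisfies $\tilde G(0)=G(0)$ because $0\notin N$. It equals $G$ on $[0,T)\setminus N$, since for such $t$ the hypothesis gives $G(t)\le G(\sigma)$ for every $\sigma\in[0,t]\setminus N$. Now set $\nu\coloneqq G(0)-\tilde G\ge0$, which is nondecreasing and bounded, and apply the layer-cake formula to $\nu$ rather than to $\pi$. Each set $\{\nu>\lambda\}$ with $\lambda<\sup\nu$ is an interval with left endpoint $c_\lambda$ extending up to $T$. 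By Fubini and $\pi(T)=0$,
\begin{align*}
G(0)\pi(0)+\int_0^T G\pi'\,dt=-\int_0^T\nu\pi'\,dt=\int_0^{\sup\nu}\pi(c_\lambda)\,d\lambda\ge0 .
\end{align*}

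Alternatively, your piecewise-linear idea can be made precise.
\begin{itemize}
\item Let $\pi_h$ be the nodal interpolant on a uniform grid, after a cutoff if $T=\infty$. Then $\pi_h(0)=\pi(0)$ and $\pi_h'\to\pi'$ in $L^1$.
\item Summation by parts gives $G(0)\pi_h(0)+\int_0^T G\pi_h'\,dt=\pi_h(0)\bigl(G(0)-\bar G_0\bigr)+\sum_{j\ge1}\pi_h(jh)\bigl(\bar G_{j-1}-\bar G_j\bigr)$, where $\bar G_j$ is the average of $G$ over $[jh,(j+1)h]$.
\item Your good-point observation shows these averages are nonincreasing in $j$ and bounded by $G(0)$, so the right-hand side is nonnegative.
\item Boundedness of $G$ then lets you pass to the limit $h\to0$.
\end{itemize}
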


We also report the following well-posedness and regularity result for an elliptic system with bulk-surface coupling (see \cite[Lemma 3.1]{Gal2024}).

\begin{lemma}\label{NSCH:Proposition:Elliptic:MuTheta}
    Suppose that \ref{Assumption:NSCH:Coefficients} holds, and let $(\phi,\psi)\in\mathcal{H}^2$ with $\abs{\phi}\leq 1$ a.e.~in $\Om$ and $\abs{\psi} \leq 1$ a.e.~on $\Ga$, $(f,f_\Ga)\in\mathcal{L}^2$ and $L\in[0,\infty]$ be given. Then there exists a unique solution $(\mu,\theta)\in\mathcal{H}^2\cap\mathcal{H}^1_{L,\beta}$
    of
    \begin{subequations}\label{NSCH:Subsystem}
    \begin{alignat}{2}
        -\Div(m_\Om(\phi)\Grad\mu) + \intO\mu\dx &= f &&\qquad\text{in~}\Om, \\
        -\Divg(m_\Ga(\psi)\Gradg\theta) + \beta m_\Om(\phi)\deln\mu + \intG\theta\dG &= f_\Ga &&\qquad\text{on~}\Ga, \\
        Lm_\Om(\phi)\deln\mu &= \beta\theta - \mu 
         &&\qquad\text{on~}\Ga,
    \end{alignat}
    \end{subequations}
    in the sense that it satisfies the variational formulation
    \begin{align*}
        &\intO m_\Om(\phi)\Grad\mu\cdot\Grad\zeta\dx + \intG m_\Ga(\psi)\Gradg\theta\cdot\Gradg\zeta_\Ga\dG \\
        &\qquad + \chi(L)\intG(\beta\theta-\mu)(\beta\zeta_\Ga-\zeta)\dG + \intO\mu\dx\intO\zeta\dx + \intG\theta\dG\intG\zeta_\Ga\dG \\
        &\quad = \intO f\,\zeta\dx + \intG f_\Ga\,\zeta_\Ga\dG
    \end{align*}
     for all $(\zeta,\zeta_\Ga)\in\mathcal{H}^1_{L,\beta}$. Furthermore, there exists $C > 0$ such that
    \begin{align*}
        \norm{(\mu,\theta)}_{\mathcal{H}^2} \leq C\norm{(f,f_\Ga)}_{\mathcal{L}^2},
    \end{align*}
    where the constant $C$ may depend on $\norm{(\phi,\psi)}_{\mathcal{H}^2}, m_\ast, m^\ast, L, \Om$ and $\Ga$.
\end{lemma}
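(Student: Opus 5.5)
The proof has two parts. First I obtain a weak solution by the Lax--Milgram theorem on the Hilbert space $\mathcal{H}^1_{L,\beta}$ (for $L=\infty$ on $\mathcal{H}^1$, where the two equations decouple). Then I upgrade it to $\mathcal{H}^2$ by elliptic regularity, distinguishing the cases $L\in(0,\infty]$ and $L=0$.

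\emph{Existence and uniqueness.} I denote by $a(\cdot,\cdot)$ the bilinear form on the left-hand side of the variational formulation. Its boundedness on $\mathcal{H}^1$ follows from $m_\Om,m_\Ga\le m^\ast$ and the trace theorem. For coercivity it suffices to prove
\[
\norm{(\mu,\theta)}_{\mathcal{H}^1}^2\le C\,a\big((\mu,\theta),(\mu,\theta)\big)\qquad\text{for all }(\mu,\theta)\in\mathcal{H}^1_{L,\beta},
\]
since $m_\Om,m_\Ga\ge m_\ast$. I would prove this by the standard compactness (Peetre--Tartar) contradiction argument. Suppose a normalized sequence has $a$-values tending to zero. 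By Rellich's theorem a subsequence converges strongly in $\mathcal{L}^2$ and, since the gradients vanish in the limit, also in $\mathcal{H}^1$. The limit then has vanishing gradients, so it is a pair of constants $(c,c_\Ga)$. The terms $\big(\intO\mu\dx\big)^2$ and $\big(\intG\theta\dG\big)^2$ in $a$ force $c=c_\Ga=0$, which contradicts the normalization. This works for every $L\in[0,\infty]$: for $L=0$ the constraint $\mu=\beta\theta$ on $\Ga$ is preserved in the limit, and the coupling term $\chi(L)\intG(\beta\theta-\mu)^2\dG$ is nonnegative and can simply be dropped. The right-hand side $(\zeta,\zeta_\Ga)\mapsto\intO f\zeta\dx+\intG f_\Ga\zeta_\Ga\dG$ is a bounded functional, so Lax--Milgram gives a unique weak solution with $\norm{(\mu,\theta)}_{\mathcal{H}^1}\le C\norm{(f,f_\Ga)}_{\mathcal{L}^2}$.

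\emph{Regularity for $L\in(0,\infty]$.} I would first derive the bulk equation in nondivergence form,
\[
-\Lap\mu=\frac{1}{m_\Om(\phi)}\Big(f-\intO\mu\dx+m_\Om'(\phi)\Grad\phi\cdot\Grad\mu\Big).
\]
This step needs $m_\Om$ to be $C^1$ (or Lipschitz), which I assume is intended: assumption \ref{Assumption:NSCH:Coefficients} only states continuity, and that is a gap I would flag. Since $\phi\in H^2(\Om)\emb W^{1,6}(\Om)$ and $\Grad\mu\in L^2$, the right-hand side lies in $L^{3/2}$. For $L\in(0,\infty)$ the boundary condition reads $\deln\mu=(\beta\theta-\mu)/(Lm_\Om(\phi))$, and this datum lies in $H^{1/2}(\Ga)$ because $\theta\in H^1(\Ga)$, $\mu|_\Ga\in H^{1/2}(\Ga)$ and $m_\Om(\phi)|_\Ga$ is a multiplier there. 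For $L=\infty$ the condition is simply $\deln\mu=0$. Neumann regularity then gives $\mu\in W^{2,3/2}(\Om)\emb W^{1,3}(\Om)$. Bootstrapping once more puts the right-hand side in $L^2$, whence $\mu\in H^2(\Om)$ with $\norm{\mu}_{H^2(\Om)}\le C\norm{(f,f_\Ga)}_{\mathcal{L}^2}$; here $C$ depends on $\norm{\phi}_{H^2(\Om)}$. On the surface, $\deln\mu\in H^{1/2}(\Ga)\subset L^2(\Ga)$, so the surface equation is an elliptic problem for $-\Divg(m_\Ga(\psi)\Gradg\theta)$ with $L^2$ data, and the same bootstrap on the closed manifold $\Ga$ (using $\psi\in H^2(\Ga)$) yields $\theta\in H^2(\Ga)$.

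\emph{Regularity for $L=0$ (main obstacle).} Here $\mu=\beta\theta$ on $\Ga$ with only $\theta\in H^1(\Ga)$, so the bulk Dirichlet problem gives merely $\mu\in H^{3/2}(\Om)$. The flux $\deln\mu$ then lies only in $H^{-1/2}$-type spaces, which is not enough to close the surface equation directly. I would treat the pair as a genuinely coupled bulk-surface elliptic system. Localizing near $\Ga$ and flattening the boundary, I would test with tangential difference quotients $-D_{-h}D_h(\mu,\theta)$, which are admissible in $\mathcal{H}^1_{0,\beta}$ because the constraint $\mu=\beta\theta$ is preserved by tangential differences. This gives $\Grad_\tau\mu\in H^1$ near $\Ga$ and $\theta\in H^2(\Ga)$, the latter via the coupled boundary energy term $\intG m_\Ga\abs{\Gradg D_h\theta}^2\dG$. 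The normal second derivative is then recovered from the equation, and interior regularity is standard. The bootstrap for the variable coefficients is as before. Alternatively, for this case I would simply cite the regularity theory for bulk-surface elliptic systems of Dirichlet type, such as \cite[Lemma~3.1]{Gal2024}. This case is where the real work lies.
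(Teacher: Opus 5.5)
The paper gives no proof of this lemma. It simply imports the result from \cite[Lemma~3.1]{Gal2024}. Your argument (Lax--Milgram on $\mathcal{H}^1_{L,\beta}$, then a bootstrap through the nondivergence form) is a correct self-contained replacement, modulo the hypotheses discussed next. Its main benefit over the paper's citation is that it makes explicit what the citation leaves implicit.

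Your flag on the mobilities is a genuine correction of the statement, not a technicality. Under \ref{Assumption:NSCH:Coefficients} alone the $\mathcal{H}^2$ bound is false. Take $m_\Om(s)=2+\abs{s}^{1/2}$, $\phi=x_1/R$, $\psi=\theta=0$, and let $\mu$ be a product cut-off of the one-dimensional solution of $-(m_\Om(\phi)U')'=1$. The data are then bounded, but $\partial_1^2\mu\sim\abs{x_1}^{-1/2}\notin L^2$. So an extra hypothesis, such as Lipschitz continuity of $m_\Om,m_\Ga$, is needed, and $C$ then also depends on the Lipschitz constants.

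Likewise, your compactness step uses that $\Gradg\theta=0$ forces $\theta$ to be constant, i.e.\ that $\Ga$ is connected. For $L=\infty$ or $\beta=0$ this is genuinely needed for uniqueness: otherwise locally constant $\theta$ with $\intG\theta\dG=0$ lie in the kernel. For $\beta\neq0$ and $L<\infty$ you can avoid it by keeping the coupling term (resp.\ the constraint $\mu=\beta\theta$) instead of discarding it.

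Finally, the case $L=0$ is cheaper than your difference-quotient scheme, because $\deln\mu$ is better than $H^{-1/2}$. Write $\mu=\mu_1+\mu_2$, where $\mu_1=0$ on $\Ga$ solves the nondivergence problem and $\mu_2$ is harmonic with trace $\beta\theta\in H^1(\Ga)$. Then:
\begin{enumerate}
\item You get $\mu_1\in W^{2,3/2}(\Om)$ and $\mu_2\in H^{3/2}(\Om)$, hence $\Grad\mu\in L^3$, an $L^2$ right-hand side, and $\mu_1\in H^2(\Om)$.
\item The Dirichlet-to-Neumann map sends $H^1(\Ga)$ into $L^2(\Ga)$ (interpolate between $H^{1/2}\to H^{-1/2}$ and $H^{3/2}\to H^{1/2}$), so $\deln\mu\in L^2(\Ga)$.
\item The generalized Green formula turns the weak formulation into the surface equation with $L^2$ data, so $\theta\in H^2(\Ga)$.
\item Then $\mu|_\Ga=\beta\theta\in H^{3/2}(\Ga)$ gives $\mu\in H^2(\Om)$.
\end{enumerate}
This avoids flattening and commutator estimates entirely; with variable coefficients, those would in any case first need $\Grad\mu\in L^3$.
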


The following lemma provides a smooth sequence of functions approximating an initial datum $(\phi_0,\psi_0)\in\mathcal{H}^1$.

\begin{lemma}\label{NSCH:Lemma:ApproxIC}
    Let $(\phi_0,\psi_0)\in\mathcal{H}^1$ with $\abs{\phi_0} \leq 1$ a.e.~in $\Om$ and $\abs{\psi_0} \leq 1$ a.e.~on $\Ga$ be given. Then there exists a sequence $\{(\phi_0^N,\psi_0^N)\}_{N\in\N}\subset\mathcal{H}^2$ with $\abs{\phi_0^N} \leq 1$ a.e.~in $\Om$, $\abs{\psi_0^N} \leq 1$ a.e.~on $\Ga$, and $(\phi_0^N,\psi_0^N)\rightarrow(\phi_0,\psi_0)$ strongly in $\mathcal{H}^1$ as $N\rightarrow\infty$. 
\end{lemma}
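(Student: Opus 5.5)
We regularize each component separately by uncoupled linear heat flows and use the maximum principle to keep the pointwise bounds. Concretely, we let $u$ solve the heat equation $\delt u - \Lap u = 0$ in $\Om\times(0,\infty)$ with homogeneous Neumann condition $\deln u = 0$ on $\Ga$ and $u(0)=\phi_0$. Independently, we let $v$ solve $\delt v - \Lapg v = 0$ on $\Ga\times(0,\infty)$ with $v(0)=\psi_0$. Both are standard analytic semigroups on $L^2$, generated by the Neumann Laplacian $A_\Om$ and the Laplace--Beltrami operator $A_\Ga$. We then set
\begin{align*}
	(\phi_0^N,\psi_0^N) \coloneqq \big(u(\tfrac1N),v(\tfrac1N)\big) = \big(e^{-A_\Om/N}\phi_0,\, e^{-A_\Ga/N}\psi_0\big).
\end{align*}

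\textbf{Regularity.} By analyticity, $e^{-tA}$ maps $L^2$ into $D(A)$ for every $t>0$. By elliptic regularity, $D(A_\Om)=\{u\in H^2(\Om):\deln u=0\}$, which uses the $C^2$-boundary from \ref{Assumption:NSCH:1}. Likewise $D(A_\Ga)=H^2(\Ga)$, since $\Ga$ is a compact $C^2$ manifold without boundary. Hence $(\phi_0^N,\psi_0^N)\in\mathcal{H}^2$.

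\textbf{Pointwise bounds.} Both semigroups are positivity preserving and leave constants fixed, because the Neumann condition and the closed manifold both make constants stationary. They are therefore $L^\infty$-contractive, i.e.\ sub-Markovian. To check this directly, we test the equation for $u$ with $(u-1)_+$ and obtain
\begin{align*}
	\tfrac12\ddt\norm{(u-1)_+}_{L^2}^2 = -\norm{\Grad(u-1)_+}_{L^2}^2 \le 0.
\end{align*}
Since $(u(0)-1)_+=0$, this gives $u\le1$. Testing with $(-1-u)_+$ gives $u\ge-1$ in the same way. The identical argument on $\Ga$, which has no boundary terms, yields $\abs{v}\le1$.

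\textbf{Strong $\mathcal{H}^1$ convergence.} This is the only step needing care, because we must obtain convergence in the $H^1$ norm and not merely in $L^2$. The key fact is that $H^1(\Om)$ is exactly the form domain of $A_\Om$, namely $D(A_\Om^{1/2})$ with the graph norm $\norm{u}_{L^2}^2+\norm{\Grad u}_{L^2}^2$. The same holds for $H^1(\Ga)$ and $A_\Ga$, with no trace condition imposed. The semigroup commutes with $A^{1/2}$ and is strongly continuous on $L^2$, so
\begin{align*}
	A^{1/2}e^{-tA}x = e^{-tA}A^{1/2}x \rightarrow A^{1/2}x \qquad\text{in } L^2 \text{ as } t\searrow0
\end{align*}
for every $x\in D(A^{1/2})$. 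This is equivalent to $e^{-tA}x\to x$ in $H^1$.

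We could alternatively verify this through the energy identity. Testing with $\delt u$ gives $\norm{\Grad u(t)}_{L^2}\le\norm{\Grad\phi_0}_{L^2}$, while $u(t)\to\phi_0$ in $L^2$. Together these give weak convergence in $H^1$ with $\limsup\norm{u(t)}_{H^1}\le\norm{\phi_0}_{H^1}$, hence strong convergence. Applying this with $t=1/N$ in both components gives $(\phi_0^N,\psi_0^N)\to(\phi_0,\psi_0)$ strongly in $\mathcal{H}^1$, which completes the proof.
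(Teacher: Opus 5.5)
Your proposal is correct and follows essentially the same route as the paper, which also takes $u$ as the Neumann heat flow from $\phi_0$ and $v$ as the uncoupled surface heat flow from $\psi_0$, and sets $(\phi_0^N,\psi_0^N)=(u,v)\vert_{t=1/N}$. The paper only asserts $(u,v)\in C((0,T];\mathcal{H}^2)\cap C([0,T];\mathcal{H}^1)$ and the pointwise bounds, whereas you supply sound justifications for each: analytic smoothing with elliptic regularity, truncation with $(u-1)_+$ for the maximum principle, and $D(A^{1/2})=H^1$ for the strong $\mathcal{H}^1$-convergence.
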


\begin{proof}
    Let $u$ be a solution to
    \begin{alignat*}{2}
        \delt u - \Lap u &= 0 &&\qquad\text{in~}Q_T, \\
        \deln u &= 0 &&\qquad\text{on~}\Sigma_T, \\
        u\vert_{t=0} &= \phi_0 &&\qquad\text{in~}\Om,
    \end{alignat*}
    and choose $v$ as a solution to 
    \begin{alignat*}{2}
        \delt v - \Lapg v &= 0 &&\qquad\text{on~}\Sigma_T, \\
        v\vert_{t=0} &= \psi_0 &&\qquad\text{on~}\Ga.
    \end{alignat*}
    Then it holds
    \begin{align*}
        (u,v)\in C((0,T];\mathcal{H}^2)\cap C([0,T];\mathcal{H}^1).
    \end{align*}
    Now, setting $(\phi_0^N,\psi_0^N) \coloneqq (u,v)\vert_{t=\frac1N}$ for any $N\in\N$ ensures $(\phi_0^N,\psi_0^N)\in\mathcal{H}^2$, $\abs{\phi_0^N} \leq 1$ a.e.~in $\Om$, $\abs{\psi_0^N} \leq 1$ a.e.~on $\Ga$ as well as $(\phi_0^N,\psi_0^N)\rightarrow (\phi_0,\psi_0)$ strongly in $\mathcal{H}^1$ as $N\rightarrow\infty$.
\end{proof}

\section*{Acknowledgement}
\noindent

The author was supported by the Deutsche Forschungsgemeinschaft (DFG,
German Research Foundation): on the one hand by the DFG-project 524694286, and on the other
hand by the RTG 2339 “Interfaces, Complex Structures, and Singular Limits”. Their support is
gratefully acknowledged.

\section*{Conflict of Interests and Data Availability Statement}

There is no conflict of interest.

There is no associated data with the manuscript.

\bibliographystyle{abbrv}
\bibliography{DissertationBibliography.bib}
\end{document}